\theoremstyle{plain}
\newtheorem{theorem}{Theorem}[section]
\newtheorem{proposition}[theorem]{Proposition}
\newtheorem{lemma}[theorem]{Lemma}
\newtheorem{corollary}[theorem]{Corollary}
\theoremstyle{definition}
\newtheorem{remark}[theorem]{Remark}
\newcommand{\N}{\mathbb{N}}
\newcommand{\Z}{\mathbb{Z}}
\newcommand{\R}{\mathbb{R}}
\newcommand{\E}{\mathbb{E}}
\newcommand{\Pro}{\mathbb{P}}
\newcommand{\Cov}{\textup{Cov}}
\newcommand{\Proba}{\mathbf{P}}
\newcommand{\Ex}{\mathbf{E}}
\newcommand{\Qu}{\mathrm{Q}}
\newcommand{\COV}{\mathrm{COV}}
\newcommand{\defn}{\textit}
\author{Vincent Tassion\footnote{ETH Zürich, Rämistrasse 101, 8092 Zürich, Switzerland. Email address of VT: vincent.tassion@math.ethz.ch. Current address and email address of HV: Institut Fourier, 100 rue des mathématiques, 38402 Saint-Martin-d'Hères, France; hugo.vanneuville@univ-grenoble-alpes.fr} \and Hugo Vanneuville\footnotemark[1]}
\title{Noise sensitivity of percolation via differential inequalities}
\date{}
\begin{document}

\maketitle

\abstract{Consider critical Bernoulli percolation in the plane. We give a new proof of the sharp noise sensitivity theorem shown by Garban, Pete and Schramm in \cite{GPS10}. Contrary to the previous approaches, we do not use any spectral tool. We rather study differential inequalities satisfied by a dynamical four-arm event, in the spirit of Kesten's proof of scaling relations in \cite{Kes87}. We also obtain new results in dynamical percolation. In particular, we prove that the Hausdorff dimension of the set of times with both primal and dual percolation equals $2/3$ a.s.



\tableofcontents

\section{Introduction}\label{sec:intro}
\subsection{Noise sensitivity of percolation}\label{ssec:noise}

Given some integer $n \ge 1$, we consider the discrete hypercube $\{0,1\}^n$ endowed with the uniform probability measure
\[
\Proba = \left( \frac{\delta_1+\delta_0}{2} \right)^{\otimes n}
\]
and we let $\Ex$ be the corresponding expectation. Let $t \in [0,1]$, let $X \sim \Proba$, and let $Y\sim \Proba$ be obtained from $X$ by resampling independently each coordinate with probability $t$. A function $f$ from $\{0,1\}^n$ to $\{0,1\}$ is called a Boolean function, and we study the covariance
\[
\COV_t(f) = \Cov(f(X),f(Y)) = \E [ f(X) f(Y) ]-\mathbf E[f]^2.
\]
(As usually, $\Pro$ and $\E$ refer to the underlying probability measure and expectation under which $X$ and $Y$ are defined, hence our use of the two different notations $(\Ex,\Proba)$ and $(\E,\Pro)$.)

It is well known that $\COV_t(f)$ is non-increasing in $t$ (see for instance Proposition~\ref{prop:2}), and in this paper we are interested in the following question: For which values of $t$ does the quantity  $\COV_t(f)$ get small? Or, in words, when do $f(X)$ and $f(Y)$ start to become ``roughly independent''? Noise sensitivity, in the sense of \cite{BKS99}, corresponds to the asymptotic version of this question.

In the present study, it will be convenient to introduce the quadratic form $\Qu_t$, defined by 
\[
\Qu_t(f)=\E [ f(X) f(Y) ]
\]
for every $f:\{0,1\}^n\to\{0,1\}$, and $\Qu_t(A)=\Qu_t(\textbf{1}_A)$ for every event $A\subseteq\{0,1\}^n$.  With this notation, the covariance introduced above can be rewritten as $\COV_t(f)= \Qu_t(f) - \Qu_1(f)$. 
\medskip

In the present work, we consider Boolean functions in the context of critical face percolation on the (regular) hexagonal lattice (which is the same as critical site percolation on the triangular lattice). Although all planar percolation models ``with enough symmetries'' are believed to behave the same (asymptotically), much more is known about this model, due to the proof of conformal invariance by Smirnov \cite{Smi01}. By using this property, the SLE processes introduced by Schramm, and Kesten's proof of scaling relations \cite{Kes87}, Lawler, Schramm and Werner \cite{LSW02} and Smirnov and Werner \cite{SW01} have computed several \textit{critical exponents} (see Section~\ref{ssec:exp}). This will be crucial in Sections~\ref{ssec:dyna} and~\ref{sec:dyna}. However, outside of these sections, we will only use results that are known both for face percolation on the hexagonal lattice and bond percolation on the square lattice, for which conformal invariance has not been proven. In Section~\ref{sec:Z2}, we state explicitly the results that we prove for bond percolation on $\Z^2$.

Let us define face percolation on the hexagonal lattice. Let $\mathcal{H}$ be the set of faces of this lattice. An element $\omega \in \{0,1\}^\mathcal{H}$ induces a coloring of the plane as follows: the hexagon $H \in \mathcal{H}$ is colored in black (resp.\ white) if $\omega_H=1$ (resp.\ $\omega_H=0$). Given a parameter $p \in [0,1]$, we equip the space $\{0,1\}^\mathcal{H}$ with the product $\sigma$-algebra and the product probability measure $(p\delta_1+(1-p)\delta_{0})^{\otimes \mathcal{H}}$. Kesten has proven in \cite{Kes80} that the critical parameter of this model is $p_c=1/2$. More precisely, if $p \le 1/2$ then a.s.\ there is no unbounded black component while this is a.s.\ the case if $p > 1/2$. We refer to \cite{Gri99,BR06,Wer07,GS14} for more about planar Bernoulli percolation.

For every integer $n \ge 1$, let $\mathcal{H}_n$ be the set of hexagons that intersect the square $[0,n]^2$ and let $g_n \, : \, \{0,1\}^{\mathcal{H}_n} \rightarrow \{0,1\}$ denote the indicator function of the left-right crossing of $[0,n]^2$, that is, the event that there is a continuous black path included in $[0,n]^2$ that connects the left side of this square and its right side. Since $p_c=1/2$, the measure induced on the hypercube $\{0,1\}^{\mathcal{H}_n}$ by critical percolation is the uniform probability measure $\Proba$ already considered above.

In \cite{BKS99}, Benjamini, Kalai and Schramm have introduced the notion of \textit{noise sensitivity} and have proven that the crossing events are noise sensitive, that is,
\[
\forall t \in (0,1] \quad \COV_t(g_n) \underset{n \rightarrow +\infty}{\longrightarrow} 0.
\]
 In \cite{SS10}, Schramm and Steif have proven the following quantitative noise sensitivity result: There exists $c>0$ such that $\COV_{t_n}(g_n) \rightarrow 0$ for any sequence $(t_n)_{n \ge 1}$ that satisfies $t_n \ge n^{-c}$. The sharp noise sensitivity theorem has then been proven by Garban, Pete and Schramm in \cite{GPS10}. In order to state this result, let us denote by $\alpha_n$ the probability that there exist four arms of alternating colors from scale $1$ to scale $n$ (see the beginning of Section~\ref{ssec:arm} for a more precise definition) and let
\begin{equation*}
\varepsilon_n = \frac{1}{n^2\alpha_n}.
\end{equation*}
(See the next subsection and the sketch of proof -- in Section \ref{ssec:sketch} -- for some intuition behind the importance of the four-arm event in the study of noise sensitivity.)

\begin{theorem}[Sharp noise sensitivity, \cite{GPS10}]\label{thm:GPS}
Let $(t_n)_{n \ge 1} \in [0,1]^{\N^*}$ and let $g_n$ be the indicator of the left-right crossing of $[0,n]^2$. We have
\[
\lim_{n \rightarrow +\infty} \frac{t_n}{\varepsilon_n} = +\infty \quad \implies \quad \lim_{n \rightarrow +\infty}
  \COV_{t_n}(g_n) = 0,
\]
and
\[
\lim_{n \rightarrow +\infty} \frac{t_n}{\varepsilon_n} = 0 \quad \implies \quad
\lim_{n \rightarrow+\infty} \COV_{t_n}(g_n) = \frac 1 4.
\]
\end{theorem}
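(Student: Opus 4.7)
The plan is to replace the spectral analysis of \cite{GPS10} by a purely dynamical argument, in the spirit of Kesten's derivation of the scaling relations in \cite{Kes87}. The central object is a \emph{dynamical four-arm probability}: reparameterize the noise by $s \ge 0$ via $t = 1 - e^{-s}$, view $(X,Y)$ as the time-$0$ and time-$s$ snapshots of continuous-time dynamical percolation in which each hexagon is refreshed independently at rate $1$, and set
\[
\beta_n(s) = \Proba\bigl[\text{a fixed hexagon has four alternating arms to distance } n \text{ at both times } 0 \text{ and } s\bigr],
\]
so that $\beta_n(0) = \alpha_n$. The strategy rests on proving, and integrating, a differential inequality for $\beta_n(s)$.

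The first step is an elementary Russo--Margulis-type identity, obtained by noting that in a small time increment $ds$ only the refresh of a pivotal hexagon can change $g_n$:
\[
-\frac{d}{ds}\Qu_s(g_n) \;=\; \tfrac{1}{2}\sum_{x \in \mathcal{H}_n}\Proba\bigl[x \text{ pivotal for } g_n \text{ at both times } 0 \text{ and } s\bigr]\;\asymp\;n^2\,\beta_n(s),
\]
where the $\asymp$ follows from translation invariance and quasi-multiplicativity, with boundary effects at the sides of $[0,n]^2$ handled as in \cite{Kes87}. Crucially, no Fourier input enters.

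The heart of the argument, and the main technical obstacle, is a self-closing differential inequality for $\beta_n(s)$. In the static setting, Kesten dissects the four-arm event across dyadic annuli and glues arms via RSW and quasi-multiplicativity. Here one wants the dynamical analogue: below a characteristic scale $L(s)$ (the scale at which a positive fraction of hexagons have been resampled by time $s$), the arm configurations of $\omega(0)$ and $\omega(s)$ should be essentially independent, contributing an extra factor $\alpha_{L(s)}$ compared with the static four-arm. Quasi-multiplicativity then produces an estimate of the form $\beta_n(s)\lesssim \alpha_n\,\alpha_{L(s)}$, or equivalently a differential inequality of the schematic form $-\tfrac{d}{ds}\beta_n(s) \ge c\,n^2\,\beta_n(s)^2/\alpha_n$, whose integration yields $\beta_n(s)\le C\alpha_n/(1+s/\varepsilon_n)$. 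Making the dynamical decoupling below scale $L(s)$ rigorous and uniform in $n$ is where the bulk of the work lies; it requires comparing arm events of $\omega(0)$ and $\omega(s)$ inside small annuli via second-moment estimates on pivotals, quasi-multiplicativity, and RSW gluing.

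With these estimates, both halves of the theorem follow. For $t_n/\varepsilon_n\to 0$, the trivial bound $\beta_n(s)\le\alpha_n$ in the Russo identity yields
\[
\COV_0(g_n)-\COV_{t_n}(g_n) \;\le\; C\,s_n\,n^2\alpha_n \;=\; C\,s_n/\varepsilon_n \;\longrightarrow\; 0,
\]
hence $\COV_{t_n}(g_n) \ge \Vari(g_n)-o(1) \ge c > 0$ by the standard RSW lower bound on the variance of crossings. For $t_n/\varepsilon_n\to +\infty$, combining the differential inequality for $\beta_n$ with a Paley--Zygmund second-moment bound relating $\COV_s(g_n)^2$ to $-\tfrac{d}{ds}\Qu_s(g_n)$ yields a self-contained inequality of the form $-\tfrac{d}{ds}\COV_s(g_n) \ge c\,\COV_s(g_n)^2/\varepsilon_n$, whose integration gives $\COV_{s_n}(g_n)\le C\varepsilon_n/s_n\to 0$.
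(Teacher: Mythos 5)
Your general framework is the right one and matches the paper: a dynamical Margulis--Russo formula expressing $-\frac{d}{dt}\Qu_t(g_n)\asymp n^2\pi_n(t)$, differential inequalities for the dynamical four-arm probability, and a Kesten-style scale analysis with a sensitivity length $\ell(t)$. Your treatment of the easy half ($t_n/\varepsilon_n\to 0$) is essentially the paper's: $\COV_0(g_n)-\COV_{t_n}(g_n)\le Ct_n/\varepsilon_n\to 0$ combined with the RSW lower bound $\Var(g_n)\ge c$.

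The hard half, however, has a genuine gap. You propose the self-closing inequality $-\frac{d}{ds}\COV_s(g_n)\ge c\,\COV_s(g_n)^2/\varepsilon_n$, obtained from a Paley--Zygmund second-moment bound, and the analogue $-\frac{d}{ds}\beta_n(s)\ge c\,n^2\beta_n(s)^2/\alpha_n$. These inequalities are false. Using the true asymptotics (which the paper establishes in Lemma~\ref{lem:3} and Theorem~\ref{thm:arm}), in the sensitivity region one has $-\frac{d}{ds}\COV_s(g_n)\asymp n^2\pi_n(s)$ and $\COV_s(g_n)^2/\varepsilon_n \asymp (1-s)^2\,(s/\varepsilon_n)^{1/3}\cdot n^2\pi_n(s)$ (writing everything with $\alpha_n=n^{-5/4}$, $\ell(s)=s^{-4/3}$ for concreteness), so the claimed lower bound fails as soon as $s$ is macroscopically above $\varepsilon_n$, precisely the regime you need. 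Structurally, this is not an accident: the Margulis--Russo formula gives only an \emph{upper} bound on $-\frac{d}{dt}\Qu_t^S(f)$ when $f$ is non-monotone, as is the case for the four-arm indicator, so there is no route to a lower bound on $-\beta_n'(s)$ of the kind your inequality requires, and Paley--Zygmund at this level of generality would reproduce the Schramm--Steif estimate $t_n\ge n^{-c}$, not the sharp rate. There is also an issue with the integrated bound you state: your decoupling heuristic $\beta_n(s)\lesssim\alpha_n\alpha_{L(s)}$ is not the correct scaling (the truth is $\pi_n(s)\asymp\alpha_n^2/\alpha_{\ell(s)}$, i.e.\ the extra factor is $\alpha_{\ell(s),n}$ and not $\alpha_{\ell(s)}$, and $\alpha_n\alpha_{\ell(s)}\ll\alpha_n$ near the curve, contradicting stability), while the bound $\beta_n(s)\le C\alpha_n/(1+s/\varepsilon_n)$ that you claim to obtain, even if granted, is too weak: inserting it into $\COV_t(g_n)\asymp\int_t^1 n^2\pi_n(s)\,ds$ yields $\COV_t(g_n)\lesssim\log(1/t)$, which does not tend to zero.

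The point that your sketch misses, and which is the actual technical heart of the paper, is the \emph{superquadratic} decay above the sensitivity length: $\pi_n(t)/\pi_m(t)\le C(m/n)^{2+c}$ for $n\ge m\ge\ell(t)$ with a strictly positive $c$ (Proposition~\ref{prop:super_quadratic}). A bare exponent $2$ (which is what a naive decoupling or second-moment argument delivers, and what your scheme encodes through the factor $n^2$) only gives quadratic decay and leaves a divergent logarithm in the covariance integral. The paper avoids any lower bound on $-\pi_n'(t)$; it derives superquadraticity from the \emph{upper} bound on $-\pi_n'(t)$ (Lemma~\ref{lem:diff_pi}), the integral constraint $\int_0^1 n^2\pi_n(t)\,dt\le C$ (Lemma~\ref{lem:diff_g_n}), dynamical quasi-multiplicativity (Proposition~\ref{prop:qm}), and the monotonicity of $t\mapsto\pi_{m,n}(t)$ (Proposition~\ref{prop:2}), through a logarithmic improvement followed by a scale-by-scale bootstrap. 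This superquadratic bound then feeds back into the differential inequality to give the sharp description of $\pi_{m,n}(t)$ in both regions (Theorem~\ref{thm:arm}), from which Theorem~\ref{thm:GPS} follows via the explicit computation of $\COV_t(g_n)$ in Lemma~\ref{lem:3}. To complete your argument you would need to replace the proposed ODIs by a mechanism producing this extra positive exponent $c$.
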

 
For face percolation on the triangular lattice, it has been proven in \cite{SW01} that $\alpha_n = n^{-5/4+o(1)}$, thus $\varepsilon_n=n^{-3/4+o(1)}$. Except when computing Hausdorff dimensions of exceptional times for dynamical percolation (see Section \ref{ssec:dyna}), we will not need the exact values of these exponents, and we will mostly rely on the following   lower bound on the ratio $\alpha_n/\alpha_m$ (which is known both for face percolation on $\mathcal{H}$ and bond percolation on $\Z^2$, see for instance \cite[Lemma 6.5]{GS14}): There exists $c>0$ such that, for every $n \ge m \ge 1$,
\begin{equation}\label{eq:fourarm_lower_back}
\frac{\alpha_n}{\alpha_m} \ge c \left( \frac{m}{n} \right)^{2-c}.
\end{equation}

The proofs from \cite{BKS99,SS10,GPS10} rely on the study of the \textit{Fourier spectrum of Boolean functions}. Spectral tools are very rich (they are for instance connected to the theories of influences, hypercontractivity, decision trees etc. -- see \cite{GS14,oDo14}), but some limitations appear when dealing with non-monotonic events (see Open Problem 8 of \cite{GPS10})\footnote{However, as explained in \cite[Section 6.2]{GHSS19}, the inclusion-exclusion principle and noise sensitivity properties for multiple monotonic events imply some noise sensitivity results for non-monotonic events.}, and more importantly, the Fourier analysis  have not been developed for other models of statistical mechanics, such as FK-percolation. In the present work, we give a new proof of Theorem~\ref{thm:GPS} without relying on any spectral tool. We also obtain new results for noise sensitivity of non-monotonic events, and we hope that this approach can be used to tackle  noise sensitivity results for Markov dynamics associated to non product measure. The main object of our proof is the \textit{dynamical four-arm event} introduced in Section~\ref{ssec:arm}, that we study via a strategy inspired by Kesten's work on near-critical percolation \cite{Kes87} (see also \cite{Wer07,Nol08}).  

\begin{remark}\label{rem:quantitative_GPS}
In Section~\ref{sec:sharp-noise-sens}, we will actually compute $\COV_t(g_n)$ up to constant factors (see Equation~\eqref{eq:3}). This quantitative  version of  Theorem~\ref{thm:GPS} is equivalent to the main spectral estimate from \cite{GPS10}, that is, Theorem 1.1 therein. Let us note that, on the contrary, the geometric spectral results from \cite{GPS10} (for instance the partial spatial independence and clustering properties, see also \cite{GV19a,GHSS19}) cannot be extracted from such a non-spectral result.
\end{remark}

We refer to \cite{BGS13,For16} for the study of noise sensitivity for another type of noise, namely the exclusion noise. Noise sensitivity has also been studied for other planar percolation models (that are believed to behave similarly to Bernoulli percolation): Poisson Boolean percolation \cite{ABGM14}, Voronoi percolation \cite{AGMT16,AB18} and percolation of nodal lines \cite{GV19b}. This notion has also been studied for Erd{\H{o}}s--Renyi percolation \cite{LS15,RS18,LP20} and bootstrap percolation \cite{BP15}. See the books \cite{GS14,oDo14} for more about noise sensitivity and Boolean functions.

\subsection{Arm events}\label{ssec:arm}

If $n \ge m \ge 1$, we let $\mathcal{H}_{m,n}$ denote the set of hexagons that intersect the annulus $[-n,n]^2 \setminus (-m,m)^2$ and we call an \defn{arm of type $1$ (resp.\ type $0$) from $m$ to $n$}  a continuous black  (resp.\ white) path from $\partial[-m,m]^2$ to $\partial[-n,n]^2$ included in the annulus $[-n,n]^2 \setminus (-m,m)^2$. For  $j\ge  1$, $\sigma  \in \{0,1\}^j$ and $n \ge m \ge j$, define 
\[
f_{m,n}^\sigma:\{0,1\}^{\mathcal{H}_{m,n}}\to \{0,1\}
\]
to be the  indicator of the event that there exist $j$ disjoint\footnote{Here, disjoint means that no hexagon is interseted by two different arms.} arms from $m$ to $n$ with respective types $\sigma_1,\ldots,\sigma_j$, in the counter-clockwise order. Such an event is called an \textit{arm event}. See Figure~\ref{fig:4arm} for an illustration of the alternating four-arm event.

Note that the assumption $n \ge m \ge j$ implies that $\Ex [ f_{m,n}^\sigma ] > 0$. If $1 \le m \le n < j$ or $1 \le n < m$, we use the convention $f_{m,n}^\sigma \equiv 1$; if $1 \le m < j \le n$, we let $f_{m,n}^\sigma = f_{j,n}^\sigma$. Define also
\[
f_{n}^\sigma=f_{j,n}^\sigma.
\]

\begin{figure}[h!]
\begin{center}
\includegraphics[scale=0.4]{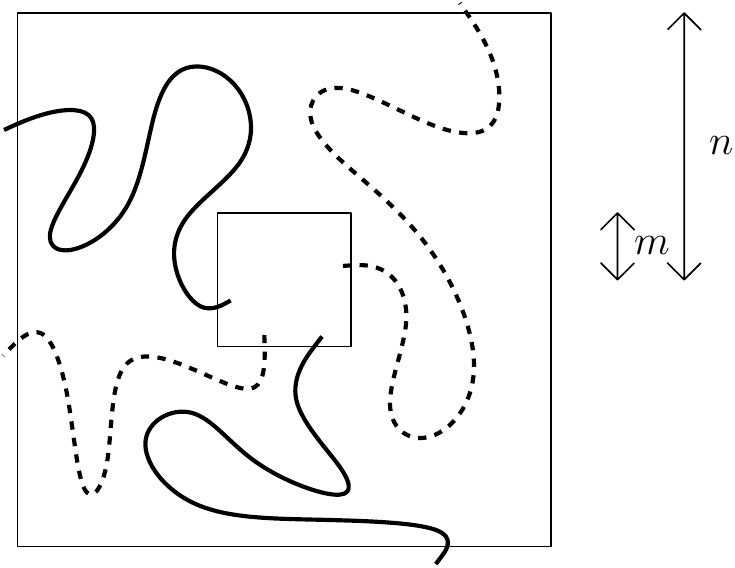}
\caption{\small The alternating four-arm event from $m$ to $n$, which corresponds to $\sigma = 0101$, or equivalently to $\sigma=1010$.\label{fig:4arm}}
\end{center}
\end{figure}

Similarly, define $f_{m,n}^{\sigma+}$ (resp.\ $f_{n}^{\sigma+}$) to be the indicator of the same event with the additional constraint that the arms stay in the (upper) half-plane, and the arm types are from right to left. Given an \emph{arm type} $\star$ (i.e.\ $\star = \sigma$ or $\sigma+$), we define 
\[
\alpha^\star_{m,n}=\Ex [ f^\star_{m,n} ],\quad \alpha^\star_{n} =\Ex [ f^\star_{n} ],
\]
and for every $t\in[0,1]$
\[
\pi^\star_{m,n}(t)= \Qu_t(f^\star_{m,n}),\quad \pi^\star_{n}(t)= \Qu_t(f^\star_{n}).
\]
Since we will often see $t$ as a time parameter, we call $\pi^\star_{m,n}(t)$ a \textit{dynamical arm event} probability. Note that we have the following ``boundary conditions'':
\[
\pi^{\star}_{m,n}(0)=\alpha^{\star}_{m,n}, \quad \pi^{\star}_{m,n}(1) = (\alpha^{\star}_{m,n})^2.
\]
The study of the (alternating) four-arm event, which corresponds to the arm type $\star=0101$, is at the core of our proof. This event corresponds to the geometric interpretation of pivotality for crossing events (see the beginning of Section \ref{sec:russo} for the definition of pivotality). When considering this event, we will omit the exponent in our notation and simply write 
\[
\alpha_{m,n} = \alpha^{0101}_{m,n}, \quad \pi_{m,n}=\pi^{0101}_{m,n}.
\]
(And similarly for $\alpha_n$ and $\pi_n$.) One can note that $\pi_n(t)$ is the probability that the origin has a four-arm both at times $0$ and $t$.
\medskip

For  each $t \in (0,1]$, we let
\begin{equation}\label{eq:sens_length}
\ell(t)=\min\{ n \ge 1 \, : \, n^2\alpha_n \geq 1/t \}.
\end{equation}
By \eqref{eq:fourarm_lower_back} with $m=1$, $n^2\alpha_n \rightarrow +\infty$. As a result, $\ell(t)$ is a well defined positive integer that goes to $+\infty$ as $t$ goes to $0$. We let $\ell(0)=+\infty$. By analogy to the correlation length studied in  \cite{Kes87}, we call $\ell(t)$ the \emph{sensitivity length}, and it has the following interpretation (see Theorem \ref{thm:GPS}): If $n \ll \ell(t)$, then crossing events at scale $n$ are not affected by a noise of intensity t. On the contrary, if $n \gg \ell(t)$, a crossing event at scale $n$ and its noised version behave roughly independently.

\begin{remark}
At this point, the link between our definition of the sensitivity length (which is made by using only the four-arm probability) and the interpretation mentioned above may not be clear. One of our main goals is actually to make this connection. More precisely: If $n \ll \ell(t)$, then a first moment argument shows that no pivotal is affected by the noise and therefore one can indeed expect that connection properties do not change at scale $n$. On the contrary, if $n \gg \ell(t)$, pivotal points start to be affected by the noise and therefore crossing events start to be affected themselves. The key difficulty is to show that they actually behave (almost) independently.
\end{remark}

The following theorem in the cases $\star=1$ and $\star=1+$ is a direct consequence of \cite{GPS10} (more generally, the proof from \cite{GPS10} very likely extends to any monochromatic arm event) but is new for the other arm types. The reason is that spectral techniques have some limitations in the study of non-monotonic events (see Open Problem 8 of \cite{GPS10}).
\begin{theorem}\label{thm:arm}
Let $\star$ be any arm type and let $\ell$ be the sensitivity length defined in \eqref{eq:sens_length} above. There exist $c,C>0$ such that for every  $t \in [0,1]$, we have
\begin{itemize}
\item[1)] (stability region) for every  $\ell(t) \ge n \ge m \ge 1$,
  \begin{equation*}
c\alpha^\star_{m,n}\le \pi^{\star}_{m,n}(t) \le \alpha^{\star}_{m,n};
\end{equation*}
\item[2)] (sensitivity region) for every $n\ge m  \ge \ell(t)$,
    \begin{equation*}
     (\alpha^{\star}_{m,n})^2\le \pi^{\star}_{m,n}(t)   \le C(\alpha^{\star}_{m,n})^2.
    \end{equation*}
\end{itemize}
\end{theorem}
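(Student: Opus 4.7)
The plan is to reduce to the alternating four-arm case $\sigma=0101$ and then obtain the nontrivial bounds by integrating a Kesten-type differential inequality for $t\mapsto\pi_{m,n}(t)$. Two of the four bounds in the statement are free: the upper bound $\pi^\star_{m,n}(t)\le\alpha^\star_{m,n}$ follows from $f^\star\in\{0,1\}$, and the lower bound $\pi^\star_{m,n}(t)\ge(\alpha^\star_{m,n})^2$ follows from the non-increasingness of $\Qu_t$ in $t$ together with $\pi^\star_{m,n}(1)=(\alpha^\star_{m,n})^2$. So the real content is the lower bound in the stability region and the upper bound in the sensitivity region.

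The central step is a Kesten-type differential inequality for the four-arm case. Writing the coupling as $Y^t_H=X_H\mathbf{1}_{\tau_H>t}+X'_H\mathbf{1}_{\tau_H\le t}$ with $(\tau_H)_H$ i.i.d.\ uniform on $[0,1]$ and $(X'_H)_H$ i.i.d.\ uniform on $\{0,1\}$ independent of $X$, a Margulis--Russo identity expresses $-\pi_{m,n}'(t)$ as $(1-t)^{-1}$ times the sum over $H\in\mathcal{H}_{m,n}$ of the probability that $H$ is pivotal for $f_{m,n}$ in \emph{both} $X$ and $Y^t$. Each doubly-pivotal contribution at a hexagon of scale $k$ can be bounded by $\alpha_k^2\cdot\pi_{m,n}(t)/\alpha_{m,n}$ using a dynamical quasi-multiplicativity
\[
\pi_{m,n}(t)\asymp\pi_{m,k}(t)\,\pi_{k,n}(t)\qquad(m\le k\le n),
\]
proved via a two-configuration RSW and gluing argument on the coupled pair $(X,Y^t)$. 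Summing over hexagons at each scale and using $\sum_{k\le n}k^2\alpha_k\asymp n^2\alpha_n$ (from \eqref{eq:fourarm_lower_back}) gives
\[
-\pi_{m,n}'(t)\;\lesssim\;n^2\alpha_n\,\frac{\pi_{m,n}(t)^2}{\alpha_{m,n}}.
\]
Integrating from $0$ to $t$ yields
\[
\pi_{m,n}(t)\ge\frac{\alpha_{m,n}}{1+C\,n^2\alpha_n\, t},
\]
which is of order $\alpha_{m,n}$ precisely when $n^2\alpha_n t\lesssim 1$, i.e.\ when $n\le\ell(t)$. This settles the lower bound in (1).

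For the upper bound in (2), I would prove a matching converse inequality $-\pi_{m,n}'(t)\gtrsim n^2\alpha_n\,\pi_{m,n}(t)^2/\alpha_{m,n}$ valid once $m\ge\ell(t)$, forcing $\pi_{m,n}(t)$ down to order $\alpha_{m,n}^2$. The mechanism is that above the sensitivity length a positive fraction of pivotals in $X$ are decoupled ($\tau_H\le t$), and an uncoupled pivotal in $X$ has probability bounded away from $1$ of remaining pivotal in $Y^t$, which gives a genuine contraction. Iterating scale-by-scale between $m$ and $n$ and combining the dynamical quasi-multiplicativity above with its static analogue $\alpha_{m,n}\asymp\alpha_{m,k}\alpha_{k,n}$ then yields $\pi_{m,n}(t)\lesssim\alpha_{m,n}^2$. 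Finally, reducing a general arm type $\star$ to the four-arm case is standard: pivotal hexagons for any arm event can be estimated in terms of arm probabilities at the pivotal location, and the resulting differential inequality for $\pi^\star_{m,n}$ integrates exactly as above.

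I expect the main obstacle to be the dynamical quasi-multiplicativity and the underlying two-configuration gluing. Kesten's static gluing uses RSW and FKG for a single Bernoulli field; here one needs a two-configuration RSW and gluing that produce simultaneous arms in $X$ and $Y^t$, respecting the random partition of hexagons into coupled and decoupled ones. Turning this into a two-sided bound on $|\pi_{m,n}'(t)|$ sharp enough to recover both the stability and the sensitivity regimes with matching constants will be the main technical task.
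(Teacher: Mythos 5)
Your identification of the free bounds (monotonicity in $t$, $f\in\{0,1\}$) and the idea of reducing to a Kesten-type differential inequality are on the right track, but there are two problems, one reparable and one that is a genuine gap.

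\textbf{The differential inequality is not what you wrote.} For a hexagon at scale $k$ that is pivotal for $f_{m,n}$ in both configurations, the event forces a four-arm event around that hexagon in \emph{both} $X$ and $Y^t$; the probability of this is $\pi_k(t)$, not $\alpha_k^2$. (They coincide only at $t=1$.) Combined with dynamical quasi-multiplicativity, the correct per-hexagon contribution is $\pi_k(t)\,\pi_{m,n}(t)$, and summing gives the \emph{linear} inequality $-\pi_{m,n}'(t)\lesssim \pi_{m,n}(t)\sum_{k=m}^n k\,\pi_k(t)$ — this is Lemma~\ref{lem:diff_pi}. Your quadratic $-\pi_{m,n}'\lesssim n^2\alpha_n\,\pi_{m,n}^2/\alpha_{m,n}$ does not follow from the pivotal argument. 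Fortunately the stability bound can be salvaged: bound $\pi_k(s)\le\alpha_k$, integrate the linear inequality to get $\pi_{m,n}(t)\ge\alpha_{m,n}\exp(-C\,t\,n^2\alpha_n)$, and this is $\gtrsim\alpha_{m,n}$ exactly when $n\le\ell(t)$. So Item~1 is recoverable. Also, the dynamical quasi-multiplicativity you flag as the main obstacle is not one — it is directly imported from \cite[Prop.~5.9]{GPS10} (Lemma~\ref{lem:qm} in the paper).

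\textbf{The sensitivity region argument has a real gap.} You propose to prove a matching \emph{lower} bound $-\pi_{m,n}'(t)\gtrsim n^2\alpha_n\,\pi_{m,n}(t)^2/\alpha_{m,n}$ and iterate to force $\pi_{m,n}(t)$ down to $\alpha_{m,n}^2$. But the dynamical Margulis--Russo identity for a \emph{non-monotone} function such as $f^{0101}_{m,n}$ gives only a one-sided estimate (the discrete gradients $\nabla_if$ can have cancelling signs), so a lower bound on $-\pi_{m,n}'$ does not come out of the pivotal picture at all. More fundamentally, your sketch never brings in the \emph{crossing event} — and this is where the theorem's content lives. The paper instead integrates the differential inequality backwards from $t=1$ (where $\pi_{m,n}(1)=\alpha_{m,n}^2$ is exactly known) and must show $\int_t^1\sum_{k\le n}k\,\pi_k(s)\,ds\lesssim 1$ when $m\ge\ell(t)$. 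For $k\le m$ this uses $\int_0^1 k^2\pi_k(s)\,ds\lesssim 1$, which comes from $\COV_t(g_k)\le 1$ plus Lemma~\ref{lem:diff_g_n}; for $k\ge m$ it requires the superquadratic decay $\pi_k(s)/\pi_m(s)\lesssim(m/k)^{2+c}$ above the sensitivity length (Proposition~\ref{prop:super_quadratic}), which in turn is proved via the same crossing-event constraint together with the quasi-monotonicity of $\pi_n/\pi_m$ in $t$. Without the global input $\int_0^1 n^2\pi_n\lesssim1$ and some version of the superquadratic estimate, your argument cannot close: a ``positive fraction of decoupled pivotals gives contraction'' is a plausible heuristic but it does not, by itself, produce the factor $(m/n)^{2+c}$ needed to make the scale-by-scale sum converge.
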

Theorem~\ref{thm:arm} is proven in Section~\ref{sec:sharp}. In Section~\ref{sec:sharp-noise-sens}, we deduce Theorem~\ref{thm:GPS} from Theorem~\ref{thm:arm}.

\subsection{Exceptional times for dynamical percolation}\label{ssec:dyna}

In this section, we rely on the computation of the critical exponents for face percolation on $\mathcal{H}$ from \cite{LSW02,SW01}, see Section~\ref{sec:dyna}. We refer to Section~\ref{sec:Z2} for the study of exceptional times for bond percolation on $\Z^2$, for which these exponents are not known. Let $\omega(0)$ be a critical percolation configuration on the hexagonal lattice, i.e.\
\[
\omega(0) \sim \left( \frac{\delta_1+\delta_{0}}{2} \right)^{\otimes \mathcal{H}}
\]
(recall that $\mathcal{H}$ is the set of faces of the hexagonal lattice). We study the Markov process $t \in \R_+ \mapsto \omega(t)$ obtained by resampling the colours at rate $1$. This process was introduced in 1992 by Benjamini (unpublished) and independently in \cite{HPS97}. It follows from known results on (static) percolation that for any  fixed $t$, a.s. there is  no unbounded monochromatic component in $\omega(t)$. However, Schramm and Steif \cite{SS10} proved that  there exist random times, called \textit{exceptional times},  at which such a property holds. Later, Garban, Pete and Schramm \cite{GPS10} computed the exact Hausdorff dimension of the set of exceptional times, and also proved the existence of exceptional times for  percolation on $\Z^2$.

We  have the following theorem about the Hausdorff dimensions of exceptional times. The upper bounds on these four dimensions have been computed in \cite{SS10} and the lower bound for Items 1 and 2 have been computed in \cite{GPS10}. In Section~\ref{sec:dyna}, we deduce the four lower bounds from Theorem~\ref{thm:arm}. Let us note that it has been proven in \cite{GPS10} that the Hausdorff dimension from Item 3 is at least $1/9$ and that it was not known previously that the set of times from Item 4 was non-empty.

\begin{theorem}\label{thm:Hausdorff}
The following holds a.s.:
\begin{itemize}
\item[(1)] the Hausforff dimension of the set of times with an unbounded black component is $31/36$;
\item[(2)] the Hausdorff dimension of the set of times with an unbounded black component in the upper half-plane is $5/9$;
\item[(3)] the Hausforff dimension of the set of times with both a black and a white unbounded components is $2/3$;
\item[(4)] the Hausdorff dimension of the set of times with two disjoint infinite black paths and an unbounded white component is $1/9$.
\end{itemize}
\end{theorem}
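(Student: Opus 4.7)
The upper bounds on all four Hausdorff dimensions are known from \cite{SS10}, so I focus on the matching lower bounds via the classical second-moment / Frostman energy method, with Theorem~\ref{thm:arm} as the key analytic input. The decisive feature is that Theorem~\ref{thm:arm} applies uniformly to every arm type, including polychromatic ones, which is what unlocks Items~(3) and~(4) (previously out of reach of the spectral methods of \cite{GPS10}). To each item I associate an arm type $\star$ with planar critical exponent $\xi^\star$ (so that $\alpha_n^\star = n^{-\xi^\star + o(1)}$, from \cite{LSW02, SW01}), such that the existence of $\star$-arms from a fixed hexagon $H_0$ to infinity at time $t$ implies the exceptional event of that item: $\star = 1$ with $\xi^\star = 5/48$ for Item~(1); $\star = 1+$ with $\xi^\star = 1/3$ for Item~(2); $\star = 01$ with $\xi^\star = 1/4$ for Item~(3) (a polychromatic two-arm produces both a primal and a dual infinite cluster); and $\star = 110$ with $\xi^\star = 2/3$ for Item~(4) (two disjoint black arms and one white arm together give two disjoint infinite black paths and an unbounded white component).

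For each $n$ I form the random measure
\[
\mu_n(dt) \;=\; \frac{1}{\alpha_n^\star}\, \mathbf{1}_{A_n^\star(t)}\, dt \quad \text{on } [0,1],
\]
where $A_n^\star(t)$ is the event that the $\star$-arms from $H_0$ to $\partial[-n,n]^2$ are all present in $\omega(t)$; then $\E[\mu_n([0,1])] = 1$. Write $u_n := 1/(n^2 \alpha_n) \asymp n^{-3/4}$ for the time scale at which $n = \ell(u_n)$. Theorem~\ref{thm:arm} yields $\pi_n^\star(u) \le (\alpha_n^\star)^2$ in the sensitivity region $u \ge u_n$, and $\pi_n^\star(u) \le \alpha_n^\star$ in the stability region $u \le u_n$. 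Splitting the time integral accordingly, the second moment $\E[\mu_n([0,1])^2]$ receives an $O(1)$ contribution from the sensitivity region and a stability contribution bounded by $u_n/\alpha_n^\star \asymp n^{\xi^\star - 3/4} = o(1)$, since $\xi^\star < 3/4$ in all four cases. The same splitting applied to the $d$-energy gives, for any $d < d_\star := 1 - \tfrac{4}{3}\xi^\star$,
\[
\E \int_0^1 \int_0^1 \frac{d\mu_n(s)\, d\mu_n(t)}{|s-t|^d} \;\lesssim\; 1 + \frac{u_n^{1-d}}{\alpha_n^\star} \;\asymp\; 1 + n^{\xi^\star - 3(1-d)/4} \;=\; O(1).
\]
Substituting the four exponents yields $d_\star = 31/36,\, 5/9,\, 2/3,\, 1/9$, exactly the claimed dimensions.

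To conclude: Paley--Zygmund gives $\Pro(\mu_n([0,1]) > 0) \ge c > 0$, and Prokhorov furnishes a subsequential weak limit $\mu_\infty$. By Fatou, $\mu_\infty$ has finite $d$-energy for every $d < d_\star$, and by compactness of the nested arm events its support is contained in the exceptional set of the corresponding item. Frostman's lemma then yields $\dim_H \ge d_\star$ on a positive probability event, and a $0/1$-law argument using time-stationarity and ergodicity of the dynamics promotes this to an almost sure statement. The main obstacle is that Items~(3) and~(4) genuinely require the sharp dynamical arm bound $\pi_n^\star(t) \le C (\alpha_n^\star)^2$ for polychromatic $\star$, which is precisely the novel content of Theorem~\ref{thm:arm}; a subsidiary technical point is verifying that the mixed-color arm configuration of Item~(4) really survives in the weak limit $\mu_\infty$, which follows from monotonicity and compactness of the nested arm events.
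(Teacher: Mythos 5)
Your overall approach is the same as the paper's: upper bounds from \cite{SS10}, lower bounds via the second-moment/Frostman energy method (the paper simply outsources this machinery to \cite[Section 6]{SS10}), with Theorem~\ref{thm:arm} supplying the dynamical arm estimates on the stability and sensitivity regions. Your exponents $\zeta^\star$ and targets $d_\star = 1-\tfrac43\zeta^\star$ all check out, and your remark that the gain over \cite{GPS10} is exactly the uniformity of Theorem~\ref{thm:arm} over polychromatic arm types is the right way to read the result.

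However, there is a genuine error in the middle step. You write that ``Theorem~\ref{thm:arm} yields $\pi_n^\star(u)\le(\alpha_n^\star)^2$ in the sensitivity region $u\ge u_n$''. This is not what Theorem~\ref{thm:arm}(2) says: it controls $\pi_{m,n}^\star(u)$ only for $m\ge\ell(u)$, whereas $\pi_n^\star=\pi_{j,n}^\star$ has base scale fixed at the (small) number of arms $j$. In fact your inequality is backwards: by monotonicity (Proposition~\ref{prop:2}) one has $\pi_n^\star(u)\ge\pi_n^\star(1)=(\alpha_n^\star)^2$, with equality only at $u=1$. The correct bound---which is what the paper uses---comes from quasi-multiplicativity across the sensitivity length: combining Theorem~\ref{thm:arm}(1) on $[1,\ell(u)]$ with Theorem~\ref{thm:arm}(2) on $[\ell(u),n]$,
\[
\pi_n^\star(u)\;\le\; C\,\pi_{\ell(u)}^\star(u)\,\pi_{\ell(u),n}^\star(u)\;\le\; C\,\alpha_{\ell(u)}^\star\,\bigl(\alpha_{\ell(u),n}^\star\bigr)^2\;\asymp\;\frac{(\alpha_n^\star)^2}{\alpha_{\ell(u)}^\star}.
\]
The resulting sensitivity-region contribution to the $d$-energy is $\int u^{-d}(\alpha_{\ell(u)}^\star)^{-1}\,du\asymp\int u^{-d-4\zeta^\star/3+o(1)}\,du$, which converges precisely when $d<1-\tfrac43\zeta^\star=d_\star$---so this region is \emph{not} trivially $O(1)$ as you claim; it imposes the same threshold as the stability region. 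The final answer is therefore unchanged, but the argument as written is wrong and needs the above fix. As a minor point, for Item~(4) you use $\star=110$ where the paper uses $\star=010$; at $p_c=1/2$ these have the same law by colour symmetry, and your choice matches the event description more literally, so either is acceptable.
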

\begin{remark}\label{rem:exp_dyna_jge4}
As proven in \cite{SS10}, a.s.\ there is no time with two disjoint unbounded black components (which is equivalent to the fact that a.s.\ there is no time with four infinite paths of alternating colors) and no time with both a black and a white unbounded component in the half-plane. The following quantitative version of this result is a consequence of \cite{SS10} and Theorem~\ref{thm:arm}. (More precisely, the upper bounds are consequences of \cite[Section 8]{SS10} and one can prove the lower bounds by using Theorem~\ref{thm:arm} and a second moment method analogous to that of the proof of Theorem~\ref{thm:Hausdorff}. We leave the details to the reader.) If $j \geq 4$ and $\sigma \in \{0,1\}^j$ is a sequence which is not monochromatic, then
\begin{align*}
&\Pro \left[ \exists t \in [0,1], \, f^\sigma_n(\omega(t))=1 \right] = n^{-(j^2-10)/12+o(1)}.
\end{align*}
If $j \geq 2$ and $\sigma \in \{0,1\}^j$ is a sequence which is not monochromatic, then
\begin{align*}
&\Pro \left[ \exists t \in [0,1], \, f^{\sigma+}_n(\omega(t))=1 \right] = n^{-(2j^2+2j-9)/12+o(1)}.
\end{align*}
\end{remark}
\medskip

In \cite{HPS15}, Hammond, Pete and Schramm have proven that an unbounded component at a ``uniformly chosen'' exceptional time has the same law as the so-called Incipient Infinite Cluster. The existence of exceptional times has also been proven for percolation under (long range) exclusion dynamics, see \cite{GV19a}, and for Voronoi percolation in a dynamical environment, see \cite{Van19}. The scaling limit of dynamical percolation has been defined and studied in \cite{GPS13,GPS18}. Dynamical percolation has also been studied in a Liouville environment, see \cite{GHSS19}. See \cite{Ros17} for the study of the scaling limit of dynamical Erd{\H{o}}s--Renyi percolation.

\subsection{Organization of the paper}

The paper is organized as follows. In Section~\ref{sec:sketch}, we present a sketch of proof of the sharp noise sensitivity theorem and explain some connections with other works. In Section~\ref{sec:russo}, we prove a dynamical Margulis--Russo formula. In Section~\ref{sec:qm} we extract a quasi-multiplicativity property for $\pi_{m,n}(t)$ from \cite[Section 5]{GPS10}. In Section~\ref{sec:diff}, we prove the differential inequalities that we will use in the next three sections. Our main intermediate result is that the dynamical four-arm event has superquadratic decay above the sensitivity length. This result is stated and proven in Section~\ref{sec:new-proof-sharp}. The sharp noise sensitivity results Theorems~\ref{thm:GPS} and~\ref{thm:arm} are proven in Sections~\ref{sec:sharp} and~\ref{sec:sharp-noise-sens} respectively, and Theorem~\ref{thm:Hausdorff} is proven in Section~\ref{sec:dyna}. Finally, in Section~\ref{sec:Z2}, we discuss consequences for bond percolation on $\Z^2$.

\paragraph{Convention for the constants.}
In all the paper, $c$ and $C$ are positive constants \textit{that are not necessarily the same at each occurrence}. These constants are independent of the scale parameters $k,m,n,\ldots$, the time parameters $s,t,u,\ldots$, the coordinates $i$ and the subsets of coordinates $R,S,\ldots$, but may depend on the arm type $\star$. Following this convention, we will often omit the quantifiers for the constants. For example, the statement ``$\forall t$ $\psi(t)\ge c\varphi(t)$'' should be interpreted as  ``$\exists c>0\ \forall t$ $\psi(t)\ge c\varphi(t)$''. Occasionally, we may we need to fix a constant for multiple uses, and we will make it clear by  writing the constant with an index (e.g. $C_1,C_2,\ldots$). Finally, given two functions $\varphi,\psi$ with non-negative values, we let $$\varphi \asymp \psi$$ if $c\varphi \le \psi \le C\varphi$.

\paragraph{Acknowledgments.} We are extremely grateful to Gábor Pete for very inspiring discussions about instability properties for the pivotal set. Moreover, we would like to thank Laurin K\"{o}hler-Schindler, Stephen Muirhead and Alejandro Rivera for inspiring discussions, and Christophe Garban for helpful comments on a previous version of this paper. Finally, we wish to thank two anonymous referees for helpful comments.

This project has received funding from the European Research Council (ERC) under the European Union’s Horizon 2020 research and innovation program (grant agreement No 851565). Both authors are supported by the NCCR SwissMAP funded by the Swiss NSF; HV is supported by the SNF Grant
No 175505.

\section{Discussion and sketch of proof}\label{sec:sketch}

\subsection{Sketch of proof}\label{ssec:sketch}

In this section, we present a sketch of proof of the sharp noise sensitivity results Theorems~\ref{thm:GPS} and~\ref{thm:arm}. Recall that $g_n$ is the indicator of the left-right crossing of $[0,n]^2$. In order to study the effect of the noise on the Boolean function $g_n$, we will use a dynamical version of Margulis--Russo's formula (see Section~\ref{sec:russo}) to show that $-\frac{d}{dt}\Qu_t(g_n) \asymp  n^2\pi_n(t)$. This formula can be understood as the analogue for noise sensitivity of the formula used to express the derivative in the parameter $p$ of crossing probabilities in terms of the four-arm probability. This implies the following identity about the covariance of crossing events:
\begin{equation}\label{eq:draft_cov}
\COV_t(g_n) = \Qu_t(g_n)-\Qu_1(g_n) \asymp \int_t^1 n^2\pi_n(s)ds.
\end{equation}
Equation \eqref{eq:draft_cov} enables one to reduce the analysis of $\COV_t(g_n)$ to that of $\pi_n(t)$, and in the rest of this section we explain how to derive a precise expression of $\pi_n(t)$ up to constants. To this purpose, we rely on an approach inspired by Kesten's study of the near-critical four-arm event \cite{Kes87} (see also \cite{Wer07,Nol08}). A key idea is to apply again Margulis--Russo's formula to the four-arm probability, which gives rise to the following inequality for all $n \ge m \ge 1$ and $t \in [0,1]$:
\begin{equation}\label{eq:draft_fourarm}
0\le -\frac{d}{dt} \log\left( \pi_{m,n}(t) \right) \le C\sum_{k=m}^n k\pi_k(t). 
\end{equation}
\begin{remark}\label{rem:mono_pi}
By \eqref{eq:draft_fourarm}, the quantities $\pi_{m,n}(t)$ are monotonic in $t$ (even if the four-arm event is not monotomic), which is a helpful property that has no analogue in Kesten's approach.
\end{remark}
By quasi-multiplicativity (proven in \cite{GPS10} for dynamical arm events, see Section~\ref{sec:qm} for more details), we have $\pi_{m,n}(t) \asymp \frac{\pi_n(t)}{\pi_m(t)}$, and therefore \eqref{eq:draft_fourarm} can be thought of as a PDI (Partial Differential Inequality) of the two-variable function $(t,n) \mapsto \pi_n(t)$ defined in the domain $D=[0,1] \times \N^*$. This function is known at the boundary of the domain:
\[
\pi_n(0) = \alpha_n, \quad \pi_n(1)=\alpha_n^2, \quad \pi_1(t)=1,
\]
and we will use the PDI (together with other properties of this function) to compute it in the interior $D$.

Recall that $\varepsilon_n = \frac{1}{n^2\alpha_n}$ and that $\ell(t)$ is the sensitivity length defined in \eqref{eq:sens_length}. In the study of the PDI \eqref{eq:draft_fourarm}, two domains appear naturally: the domains below and above the curve $t \mapsto \ell(t)=t^{-4/3}$ (see Figure~\ref{fig:curve}), where for simplicity we have assumed that $\alpha_n=n^{-5/4}$, which implies that $\varepsilon_n=n^{-3/4}$ and $\ell(t)=t^{-4/3}$. (This is of course not exactly true, but justified by the fact that Smirnov and Werner \cite{SW01} have proven that $\alpha_n = n^{-5/4+o(1)}$ for critical percolation on the hexagonal lattice.)

\begin{figure}[h!]
\begin{center}
\includegraphics[scale=0.5]{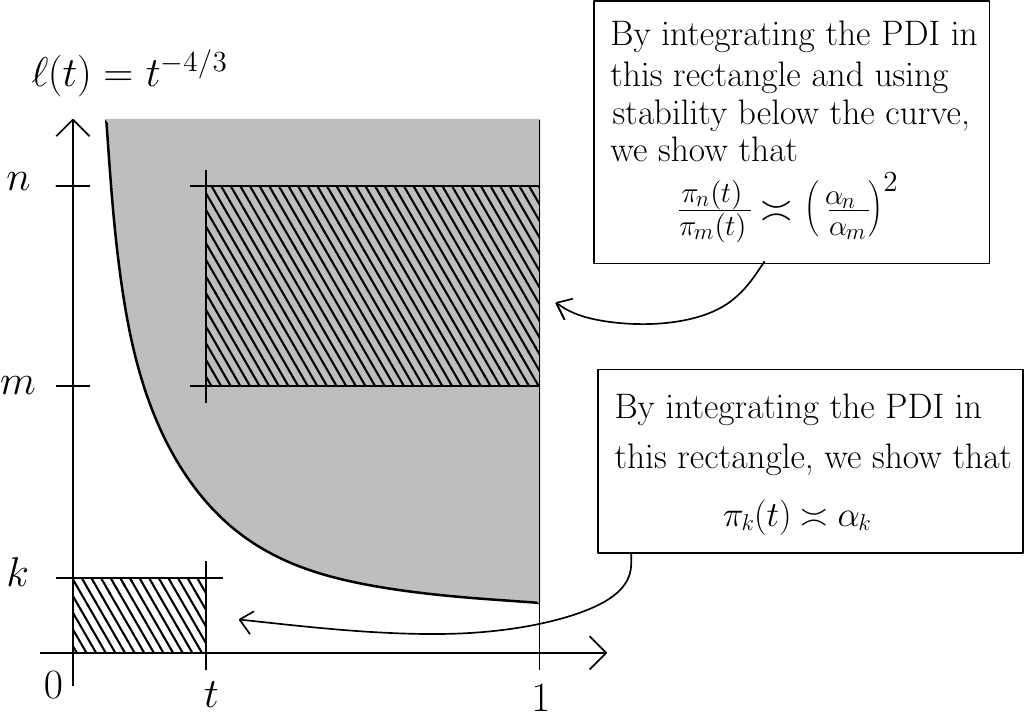}
\caption{\small The curve $t \mapsto \ell(t)$. We show stability of the dynamical four-arm probability below the curve (i.e.\ in the white region) and stability of the ratio of two dynamical four-arm probabilities above the curve (i.e.\ in the grey region).\label{fig:curve}}
\end{center}
\end{figure}

Using the PDI \eqref{eq:draft_fourarm} with $m=1$, it follows from the definition of $\ell(t)$ that $\pi_n(t)$ is stable below the curve, that is
\begin{equation}\label{eq:draft_stab}
\forall n \le \ell(t) \quad \pi_n(t) \asymp \alpha_n=n^{-5/4}.
\end{equation}
This step is exactly the same as in Kesten's approach. The main difference is the study above the curve. Indeed, if $p\neq p_c$, renormalization arguments imply that the percolation probabilities decay exponentially fast above the correlation length, while in the present context, we always stay at criticality and $\pi_n(t)$ will also decay polynomially fast above the sensitivity length.

For simplicity, let us assume that there exists some exponent $\beta>0$ such that, for all $n \ge m \ge \ell(t)$,
\[
\frac{\pi_n(t)}{\pi_m(t)} \asymp \left( \frac{m}{n} \right)^\beta.
\]
By \eqref{eq:draft_stab}, we already know that $\pi_n(t) \asymp \alpha_n$ on the curve (i.e.\ $\pi_{\ell(t)}(t) \asymp \alpha_{\ell(t)}$). Together with  quasi-multiplicativity, this implies that for every $(t,n)$ above the curve,
\[
\pi_n(t) \asymp \alpha_{\ell(t)} \left( \frac{\ell(t)}{n} \right)^\beta = t^{\frac 5 3 -\frac 4 3 \beta} n^{-\beta}.
\]
The key step is to prove that the exponent $\beta$ above the curve is strictly larger than $2$ (contrary to the exponent $5/4$ below the curve, see \eqref{eq:draft_stab}). Under the assumption that this exponent $\beta$ exists, this follows directly from
\begin{equation}\label{eq:COV_cha_draft}
1 \ge \COV_{\varepsilon_n}(g_n) \overset{\eqref{eq:draft_cov}}{\ge} c \int_{\varepsilon_n}^1 n^2 \pi_n(t)dt.
\end{equation}
Of course, the existence of $\beta$ is a very strong assumption. However, in Section~\ref{sec:new-proof-sharp} (which is the key section of the paper), we will replace this assumption by the known quasi-multiplicativity and monotonicity properties of $\pi_{m,n}(t)$, which together with \eqref{eq:COV_cha_draft} will enable us to show that for all $n \ge m \ge \ell(t)$,
\begin{equation}\label{eq:2+c_draft}
\frac{\pi_n(t)}{\pi_m(t)} \le C \left( \frac{m}{n} \right)^{2+c}.
\end{equation}

By plugging \eqref{eq:2+c_draft} in the PDI \eqref{eq:draft_fourarm}, one can then show that the ratio $\pi_n(t)/\pi_m(t)$ is stable above the curve. Therefore,
\[
\forall n \ge m \ge \ell(t) \quad \frac{\pi_n(t)}{\pi_m(t)} \asymp \frac{\pi_n(1)}{\pi_m(1)} = \frac{\alpha_n^2}{\alpha_m^2},
\]
which together with \eqref{eq:draft_stab} concludes the desired computation of $\pi_n(t)$.

\begin{remark}
Our main intermediate result is \eqref{eq:2+c_draft}: $\pi_n(t)$ is \textit{superquadratic} above the curve. Note that in near-critical percolation \cite{Kes87}, even the (black or white, depending on if $p<p_c$ or $p>p_c$) one-arm event probability is superquadratic above the curve since it is exponentially small. In order to prove \eqref{eq:2+c_draft}, we rely on several properties of $\pi_n(t)$, including properties that are not present in Kesten's proof, such as the monotonicity property (see Remark~\ref{rem:mono_pi}) and quasi-multiplicativity above the curve.
\end{remark}

\subsection{Further connections with other works}

\paragraph{A dynamical Margulis--Russo formula.} The dynamical Margulis--Russo formula used in the present paper (see Section~\ref{sec:russo}) is not new. In particular, this is a direct consequence of general covariance representations along Markov semi-groups, see for instance \cite[Section 1.7.1 and Chapter 2]{BGL13}. Moreover, as explained in \cite{GP20} where Galicza and Pete study sparse reconstruction of Boolean functions, this is an analogue for Boolean functions of covariance formulas for the Ornstein--Ulhenbeck dynamics, which are for instance central in Chatterjee's theory of superconcentration and chaos \cite{Cha14} (see e.g.\ Section 4 of Chapter 2 from this book). Analogous covariance formulas also play an important role in Pitterbarg's work \cite{Pit96}, which has been reinterpreted by using pivotal events (in a Gaussian setting) in \cite{RV19,BMR18}.

As explained in \cite[Section 5.4]{BKS99}, $\COV_t(g_n)$ can be seen as the variance of the quenched probability of the crossing event on a random graph. From this point view, the dynamical Margulis--Russo formula is very similar to Theorem 2.1 of \cite{AGMT16} that is used to study the quenched crossing probabilities in Voronoi percolation.

\paragraph{Noise sensitivity via differential equations.} In \cite{EG20}, Eldan and Gross study noise sensitivity (and other properties of Boolean functions) by using differential inequalities satisfied by $\COV_t(f)$. Contrary to the present paper, their techniques are based on stochastic calculus and they study Boolean functions in general.

\paragraph{Upper and lower bounds on the pivotal probabilities.} We refer to the beginning of Section~\ref{sec:russo} for the notion of pivotality and we recall (see for instance \cite{Wer07}) that, if we neglect boundary issues, $\forall i \quad \Proba[i \text{ is piv. for } g_n] \asymp \alpha_n$. The classical result  \cite[Theorem~1.3]{BKS99} states that the sequence $g_n$ is noise sensitive if and only if $\sum_{i}\Proba[i \text{ is piv. for } g_n]^2\asymp n^2\alpha_n^2$ converges to $0$ as $n\to\infty$. And therefore, noise sensitivity follows from an upper bound on $\alpha_n$. On the contrary, one of the main inputs of our approach is the lower bound \eqref{eq:fourarm_lower_back} on $\alpha_n$, which essentially means that there are many pivotals in expectation. 

\paragraph{Differential inequalities in percolation.} As mentioned earlier, differential inequalities were used by Kesten to study the phase transition of percolation, but the interest of differential inequalities for percolation was also highlighted in several other works, see for instance \cite{AB87} or the study of mean-field behavior \cite{HH17}.

\section{A dynamical Margulis--Russo formula}\label{sec:russo}

In this section we prove a dynamical version of the Margulis--Russo formula, in the general  framework of Boolean functions on the hypercube. Given a general Boolean function $f:\{0,1\}^n\to\{0,1\}$, we will express the  derivative of $\Qu_t(f)$ with respect to the noise parameter $t$ in terms of the pivotal events. This formula will be instrumental in Section~\ref{sec:diff}, where we will establish differential inequalities satisfied by the crossing and arm probabilities.

We consider a heterogeneously noised configuration where the points are resampled with probability $t$ inside a subset $S\subseteq [n]:=\{1,\cdots,n\}$, and fully resampled outside $S$.\footnote{This will be useful later when studying the derivative of arm probabilities. The set $S$ will be chosen as the set of points away from the boundary of the considered domain, and it will allow us to ignore the effect of points close the boundary for which the pivotality has a different interpretation than the point in the ``bulk''.} Let $n \ge 1$, $t \in [0,1]$ and $S \subseteq [n]$. Also, let $X=(X_1,\cdots,X_n) \sim \Proba$ and let $Y=(Y_1,\cdots,Y_n) \sim \Proba$ be obtained from $X$ by resampling independently each coordinate $i \in S$ with probability $t$ and by resampling independently each coordinate $i \in [n] \setminus S$ with probability $1$. If $f \, : \, \{0,1\}^n \rightarrow \{0,1\}$, we let
\[
\Qu_t^S(f) = \E [ f(X)f(Y) ], \quad \COV_t^S(f)=\Cov(f(X),f(Y)).
\]
In particular, $\Qu_t^{[n]}(f)=\Qu_t(f)$ and $\COV_t^{[n]}(f)=\COV_t(f)$. If $A \subseteq \{0,1\}^n$, we let 
\[
\Qu_t^S(A)=\Qu^S_t(\textbf{1}_A).
\]

It is known that if $Y$ is a noised  configuration of $X$, then the covariance between $f(X)$ and $f(Y)$ decreases when the noise increases (see \cite{GS14,oDo14}). In the present work, we will use the following monotonicity properties of $\Qu_t^S(f)$ (that is deduced below from general derivative formula with respect to the noise).
\begin{proposition}\label{prop:2}
  Let $f \, : \, \{0,1\}^n \rightarrow \{0,1\}$. The quantity $\Qu_t^S(f)$ is non-increasing in $t$ and non-decreasing in $S$.
\end{proposition}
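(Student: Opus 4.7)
The plan is to derive both monotonicity statements from a single coordinate-wise derivative inequality. I would work in the slightly more general heterogeneous setup in which coordinate $i$ is resampled with probability $t_i\in[0,1]$ (independently across $i$), and write $\Qu^{\vec t}(f):=\Ex[f(X)f(Y)]$ for the corresponding quadratic form; the excerpt's setup is the special case $t_i=t$ for $i\in S$ and $t_i=1$ for $i\notin S$. Both statements then reduce to the single inequality $\partial_{t_i}\Qu^{\vec t}(f)\le 0$: monotonicity in $t$ corresponds to moving $(t_i)_{i\in S}$ upward simultaneously, while adding $j$ to $S$ amounts to decreasing $t_j$ from $1$ down to $t\le 1$.

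For the derivative, I would condition on all coordinates other than $i$. The joint law $\Proba[(X_i,Y_i)=(a,a)]=(2-t_i)/4$ and $\Proba[(X_i,Y_i)=(a,1-a)]=t_i/4$ is linear in $t_i$, so writing the discrete gradient as $\nabla_i f(x_{-i}):=f(x_{-i},0)-f(x_{-i},1)$ and expanding, a short computation yields
\[
\frac{\partial}{\partial t_i}\,\Qu^{\vec t}(f)=-\frac{1}{4}\,\Ex\bigl[\nabla_i f(X_{-i})\,\nabla_i f(Y_{-i})\bigr].
\]
It remains to check that the right-hand side is non-positive, i.e.\ that $\Ex[g(X_{-i})g(Y_{-i})]\ge 0$ for $g=\nabla_i f$.

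I would obtain this from the following spectral-free key lemma: for any $g:\{0,1\}^m\to\R$ and any heterogeneous noise pair $(X,Y)$ on $\{0,1\}^m$, one has $\Ex[g(X)g(Y)]\ge 0$. The argument is to disintegrate over the random set $T\subseteq[m]$ of coordinates on which $Y$ is freshly resampled (so $i\in T$ independently with probability $t_i$). Conditionally on $T$, one has $Y_{[m]\setminus T}=X_{[m]\setminus T}$ and $Y_T$ is an independent fresh uniform sample, so by the tower property
\[
\Ex\bigl[g(X)g(Y)\,\big|\,T\bigr]=\Ex\!\left[\bigl(\Ex[g(X)\mid X_{[m]\setminus T}]\bigr)^2\,\Big|\,T\right]\ge 0,
\]
and integrating over $T$ concludes. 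Applied to $g=\nabla_i f$ on $[n]\setminus\{i\}$ (with the induced noise parameters), this gives $\partial_{t_i}\Qu^{\vec t}(f)\le 0$, and integrating this derivative inequality in each $t_i$ yields both desired monotonicities.

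The only (minor) obstacle is the non-negativity lemma itself: via Fourier it is the one-line identity $\Ex[g(X)g(Y)]=\sum_R \hat g(R)^2\prod_{i\in R}(1-t_i)$, but since the paper is deliberately spectral-free the disintegration over the resampling set $T$ is the natural substitute, producing the same squared structure by an elementary conditional-expectation argument.
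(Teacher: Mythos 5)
Your overall structure matches the paper's: both reduce the two monotonicity statements to the single coordinate-wise derivative bound $\partial_{t_i}\Qu^{\vec t}(f)\le 0$ in the heterogeneous setting, and both obtain the derivative formula $\partial_{t_i}\Qu^{\vec t}(f)=-\tfrac14\,\Ex[\nabla_i f(X)\nabla_i f(Y)]$ (with your convention for $\nabla_i$; the paper absorbs the $1/4$ by putting a factor $\tfrac12$ inside its definition of the discrete gradient, so the two formulas agree). The genuine point of divergence is the proof that $\Ex[\nabla_i f(X)\nabla_i f(Y)]\ge 0$. You disintegrate over the random resampling set $T$: conditionally on $T$, the coordinates outside $T$ are shared between $X$ and $Y$ while the coordinates inside $T$ are independent fresh samples, so the expectation collapses to $\Ex[\Ex[g\mid X_{[m]\setminus T}]^2]\ge 0$. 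The paper instead uses a ``square-root'' coupling: it sets $1-t_i=(1-s_i)^2$, introduces an intermediate configuration $\overline W$ so that $\overline X\to\overline W\to\overline Y$ are two independent $s_i$-resamplings, and observes that conditionally on $\overline W$ the variables $\overline X,\overline Y$ are i.i.d., giving $\Ex[\Ex[\nabla_i f(\overline X)\mid\overline W]^2]\ge 0$. Both devices manufacture the same squared-conditional-expectation structure without Fourier analysis; yours conditions on a variable already present in the model (the set of freshly drawn bits), whereas the paper's introduces an auxiliary reversible midpoint. Both are correct and of comparable difficulty, so this is a legitimate alternative proof of the non-negativity step, embedded in an otherwise identical argument.
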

In order to state the main result of this section, let us recall  the notion of pivotality associated to a Boolean function. Let $n \ge 1$ and $f \, : \, \{0,1\}^n \rightarrow \{0,1\}$.  A coordinate $i \in [n]$ is said to be  \defn{pivotal} for $f$ in a configuration $x \in \{0,1\}^n$ if $f(x^i) \neq f(x)$, where $x^i \in \{0,1\}^n$ is obtained from $x$ by changing the $i^{th}$ coordinate.

\begin{proposition}[Dynamical Margulis--Russo formula]
  \label{prop:1}
  Let $f:\{0,1\}^n\to\{0,1\}$ and $S\subseteq [n]$. For every  $t\in[0,1]$, we have
\begin{equation*}
0\le -\frac{d}{dt}\Qu_t^S(f)\le \frac14\sum_{i\in S} \Qu_t^S(i \text{ is piv. for } f),
\end{equation*}
and the second inequality is an equality when $f$ is monotone.
\end{proposition}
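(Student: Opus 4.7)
The plan is to work in the more flexible setting of heterogeneous resampling probabilities. For a vector $\vec p = (p_i)_{i \in [n]} \in [0,1]^n$, let $\Qu^{\vec p}(f) = \E[f(X)f(Y)]$ where $Y$ is obtained from $X$ by independently resampling each coordinate $i$ with probability $p_i$. The case considered in Proposition~\ref{prop:1} corresponds to $p_i = t$ for $i \in S$ and $p_i = 1$ for $i \notin S$. Since the $i$-th resampling decision is a Bernoulli$(p_i)$ independent of all the other randomness, $\Qu^{\vec p}(f)$ is affine in each $p_i$ separately: denoting by $A_i$ and $B_i$ the values obtained by forcing the $i$-th decision to ``keep'' (so that $Y_i = X_i$) and ``resample'' (so that $Y_i$ is an independent uniform bit),
\[
\Qu^{\vec p}(f) = (1-p_i) A_i + p_i B_i, \qquad \partial_{p_i} \Qu^{\vec p}(f) = B_i - A_i.
\]
Specializing gives $-\tfrac{d}{dt} \Qu^S_t(f) = \sum_{i \in S} (A_i - B_i)$, so Proposition~\ref{prop:1} reduces to $0 \le A_i - B_i \le \tfrac{1}{4}\, \Qu^S_t(i \text{ is piv. for } f)$ for each $i \in S$.

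For the upper bound I would use the pivotal decomposition $f(x) = (1 - x_i) \phi_0(x_{\neq i}) + x_i \phi_1(x_{\neq i})$. Conditioning on $(X_{\neq i}, Y_{\neq i})$ and integrating out the $i$-th coordinates (one fresh uniform bit for $A_i$, two independent fresh uniform bits for $B_i$), a short algebraic expansion yields
\[
A_i - B_i = \tfrac{1}{4}\, \E\bigl[(\phi_0 - \phi_1)(X_{\neq i}) \cdot (\phi_0 - \phi_1)(Y_{\neq i})\bigr].
\]
Since $(\phi_0 - \phi_1)(x_{\neq i}) \in \{-1, 0, 1\}$ is nonzero exactly when $i$ is pivotal for $f$ at $(x_{\neq i}, \cdot)$, the trivial bound $|(\phi_0-\phi_1)(X_{\neq i})(\phi_0-\phi_1)(Y_{\neq i})| \le \mathbf 1_{\{i \text{ piv at } X\}} \mathbf 1_{\{i \text{ piv at } Y\}}$ gives the desired upper bound. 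When $f$ is monotone, $\phi_1 - \phi_0 \in \{0, 1\}$, so the product inside the expectation equals $\mathbf 1_{\{i \text{ piv at } X\}} \mathbf 1_{\{i \text{ piv at } Y\}}$ on the nose and the inequality becomes an equality.

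The main obstacle is showing $A_i - B_i \ge 0$ for non-monotone $f$, which is exactly what produces the lower bound $-\tfrac{d}{dt}\Qu^S_t(f) \ge 0$. To this end, I would condition on the random set $R^\ast \subseteq [n] \setminus \{i\}$ of coordinates $j \neq i$ that are freshly resampled when producing $Y$; by construction this contains every $j \notin S$ and each $j \in S \setminus \{i\}$ independently with probability $t$. Let $F := ([n] \setminus \{i\}) \setminus R^\ast$. Conditionally on $R^\ast$ and on the common values $X_F = Y_F$, the restrictions $X_{R^\ast}$ and $Y_{R^\ast}$ are independent uniform bit-vectors. Writing $g := \phi_0 - \phi_1$, this gives
\[
\E\bigl[g(X_{\neq i})\, g(Y_{\neq i}) \,\big|\, R^\ast, X_F\bigr] = \Bigl(\E_{U}\bigl[g(X_F, U)\bigr]\Bigr)^{2} \ge 0,
\]
where $U$ denotes an auxiliary uniform bit-vector indexed by $R^\ast$. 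Averaging over $R^\ast$ and $X_F$ yields $A_i - B_i \ge 0$; summing over $i \in S$ concludes the proof of Proposition~\ref{prop:1}. The identity $\partial_{p_{i_0}} \Qu^{\vec p}(f) = B_{i_0} - A_{i_0} \le 0$ then yields the monotonicity in $S$ asserted in Proposition~\ref{prop:2} by decreasing $p_{i_0}$ from $1$ to $t$ when $i_0$ is added to $S$.
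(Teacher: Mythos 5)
Your computation of the derivative is essentially the paper's, up to notation: the pivotal decomposition $f(x) = (1-x_i)\phi_0 + x_i\phi_1$ is the same as the paper's $f = (2x_i-1)\nabla_i f + \Ex_i f$ (with $\nabla_i f = \tfrac12(\phi_1-\phi_0)$), and the resulting formula $A_i - B_i = \tfrac14\,\E[(\phi_0-\phi_1)(X_{\neq i})(\phi_0-\phi_1)(Y_{\neq i})]$ is identical to the paper's $\E[\nabla_i f(X)\,\nabla_i f(Y)]$. The upper bound and the monotone case are then handled exactly as in the paper. Where you genuinely diverge is the proof of non-negativity. The paper introduces a two-step ``square-root'' coupling: with $1-t_i = (1-s_i)^2$, it builds an intermediate $\overline{W}$ by resampling with probability $s_i$ and then $\overline{Y}$ by resampling again, so that conditionally on $\overline{W}$ the configurations $\overline{X}, \overline{Y}$ are i.i.d.\ and the expression becomes $\E\bigl[\E[\nabla_i f(\overline X)\mid\overline W]^2\bigr] \ge 0$. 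You instead condition on the random resampled set $R^\ast$ and the shared bits $X_F$, under which $X_{R^\ast}$ and $Y_{R^\ast}$ are i.i.d.\ uniform, making the conditional expectation a square directly. Both are correct; yours is arguably the more elementary route and avoids introducing an auxiliary intermediate configuration, while the paper's coupling has the appeal of realizing $(\overline X,\overline Y)$ as a two-step Markov evolution around a common ``middle'' state. One tiny remark: your displayed bound $|(\phi_0-\phi_1)(X_{\neq i})(\phi_0-\phi_1)(Y_{\neq i})| \le \mathbf 1_{\{i \text{ piv at } X\}}\mathbf 1_{\{i \text{ piv at } Y\}}$ is in fact an equality, which you then use anyway to dispose of the monotone case.
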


\begin{remark}
More general versions of these propositions are known (for instance when the measure is still product but not uniform), see e.g. \cite{BGL13}.
\end{remark}

In order to prove Proposition~\ref{prop:1}, we use the following coupling. Let $S \subset [n]$. Let $X \sim \Proba$, let $Z \sim \Proba$ independent of $X$ and let $U=(U_1,\cdots,U_n)$ be a uniform variable in $[0,1]^n$ independent of $(X,Z)$. Given some $(t_1,\cdots,t_n) \in [0,1]^n$, let $Y(t_1,\cdots,t_n) \sim \Proba$ be defined by
\[
Y_i(t_1,\cdots,t_n)=
\begin{cases}
\text{$X_i$ if $U_i \ge t_i$ and $i \in S$};\\
\text{$Z_i$ if $U_i < t_i$ or $i \notin S$.}
\end{cases}
\]

For every $f \, : \, \{0,1\}^n \rightarrow \{0,1\}$ and  $i \in [n]$, the discrete partial derivative of $f$ along the $i^{\text{th}}$ coordinate is defined as follows:
\[
\nabla_i f = \frac{1}{2}\left(f \circ \sigma_i^1 - f \circ \sigma_i^0 \right),
\]
where $\sigma_i^1 \, : \, \{0,1\}^n \rightarrow \{0,1\}^n$ (resp.\ $\sigma_i^0 \, : \, \{0,1\}^n \rightarrow \{0,1\}^n$) replaces the $i^\text{th}$ coordinate by $1$ (resp.\ $0$) and leaves the other coordinates unchanged.
\begin{lemma}\label{lem:diff}
Let $f \, : \, \{0,1\}^n \rightarrow \{0,1\}$. For every $i\in[n]$, we have
\begin{equation}
-\frac{\partial}{\partial t_i}\E [ f(X)f(Y(t_1,\ldots,t_n))] =
\begin{cases}
\E \left[ \nabla_i f(X) \nabla_i f(Y(t_1,\ldots,t_n)) \right] \text{ if $i \in S$};\\
0 \text{ if $i \notin S$}
\end{cases}
\label{eq:6}
\end{equation}
and this quantity is non-negative. 
\end{lemma}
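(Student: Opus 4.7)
}

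The plan is to compute the derivative by conditioning on $U_i$, and then to obtain non-negativity via a conditional independence argument. First, I would fix the parameters $t_j$ for $j\neq i$ and observe that in the coupling, the dependence of $Y(t_1,\ldots,t_n)$ on $t_i$ enters only through whether the event $\{U_i<t_i\}$ occurs: on $\{U_i\ge t_i\}$ one has $Y_i=X_i$, and on $\{U_i<t_i\}$ one has $Y_i=Z_i$. Crucially, $Y_{-i}=(Y_j)_{j\neq i}$ does not depend on $t_i$, and $U_i$ is independent of $(X, Z, U_{-i})$. Splitting according to the value of $U_i$ gives
\[
\E[f(X)f(Y(t))]= (1-t_i)\,\E\bigl[f(X_{-i},X_i)f(Y_{-i},X_i)\bigr]+t_i\,\E\bigl[f(X_{-i},X_i)f(Y_{-i},Z_i)\bigr],
\]
so that $-\partial_{t_i}$ equals the first expectation minus the second.

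Next, I would condition on $X_{-i}$ and $Y_{-i}$, and average over $X_i$ and $Z_i$, which are independent uniform bits. Writing $f_a:=f(X_{-i},a)$ and $g_a:=f(Y_{-i},a)$, the first conditional expectation is $\tfrac12(f_0g_0+f_1g_1)$ and the second is $\tfrac14(f_0+f_1)(g_0+g_1)$. A direct expansion shows their difference equals $\tfrac14(f_1-f_0)(g_1-g_0)=\nabla_i f(X)\nabla_i f(Y(t))$, using that $\nabla_i f$ does not depend on the $i$-th coordinate. Taking expectations yields \eqref{eq:6}.

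For non-negativity, the observation is that $\nabla_i f(X)\nabla_i f(Y(t))$ is a product of the same function $h:=\nabla_i f$ evaluated on $X_{-i}$ and $Y_{-i}$, and that $Y_{-i}$ is precisely a noisy version of $X_{-i}$. I would condition on the random set $T:=\{j\neq i:U_j\ge t_j\}$ of non-resampled coordinates. Given $T$, one has $Y_j=X_j$ for $j\in T$ and $Y_j=Z_j$ for $j\notin T$, with $X_{T^c}$ and $Z_{T^c}$ independent and identically distributed conditionally on $X_T$. Hence $X_{-i}$ and $Y_{-i}$ are conditionally independent given $(T,X_T)$, and both $\E[h(X_{-i})\mid T,X_T]$ and $\E[h(Y_{-i})\mid T,X_T]$ equal the same function $h_T(X_T):=\E[h(X_{-i})\mid T,X_T]$. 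Therefore
\[
\E\bigl[h(X_{-i})h(Y_{-i})\,\big|\,T,X_T\bigr]=h_T(X_T)^2\ge 0,
\]
and taking expectations gives the non-negativity.

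The routine part is the algebraic identity in the second paragraph; the only step that requires a real idea is the conditional independence argument in the last paragraph, which replaces the Fourier/noise-operator proof and keeps the treatment spectral-free in the spirit of the paper.
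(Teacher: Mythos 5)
Your proof is correct, and it takes a genuinely different route from the paper's in both halves of the lemma. For the derivative, the paper fixes $i=1$, writes $f(x)=(2x_1-1)\nabla_1 f(x)+\Ex_1 f(x)$, and computes the difference quotient $\varphi(t_1+\delta)-\varphi(t_1)$ directly via $f(Y')-f(Y)=2(Y_1'-Y_1)\nabla_1 f(Y)$ together with $\E[X_1(Y_1'-Y_1)]=-\delta/4$; you instead observe that, since $Y_{-i}$ and $(X,Z)$ are independent of $U_i$, the map $t_i\mapsto \E[f(X)f(Y(t))]$ is \emph{affine} in $t_i$ with slope $\E[f(X)f(Y_{-i},Z_i)]-\E[f(X)f(Y_{-i},X_i)]$, and then reduce this slope to $-\E[\nabla_i f(X)\nabla_i f(Y)]$ by averaging over the uniform bits $X_i,Z_i$ conditionally on $(X_{-i},Y_{-i})$. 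Your route makes the linearity in $t_i$ transparent instead of hiding it inside a telescoping identity, and avoids the decomposition $f=(2x_1-1)\nabla_1 f+\Ex_1 f$. For the non-negativity, the paper introduces a \emph{new} coupling $(\overline X,\overline W,\overline Y)$ realizing the noise as a two-step chain (with $1-t_i=(1-s_i)^2$), so that $\overline X$ and $\overline Y$ are conditionally i.i.d.\ given $\overline W$ and the expectation becomes $\E\bigl[\E[\nabla_i f(\overline X)\mid\overline W]^2\bigr]$; you instead condition on the random set $T$ of non-resampled coordinates together with $X_T$ in the coupling \emph{already at hand}, for which $X_{-i}$ and $Y_{-i}$ are conditionally i.i.d.\ (here you tacitly use, and correctly, that $\nabla_i f$ does not depend on the $i$-th bit). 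Both arguments reduce the product to a conditional second moment; the paper's version applies verbatim to any bounded function of $(X,Y)$ noise pair and makes the Markov-semigroup reversibility visible, while yours is marginally more economical since it avoids introducing an auxiliary coupling. Either proof closes the lemma.
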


Before proving this lemma, let us see how it concludes the proof of Propositions~\ref{prop:2} and~\ref{prop:1}. First, the non-negativity of \eqref{eq:6} implies that the quantity
\begin{equation*}
  \E [ f(X)f(Y(t_1,\ldots,t_n))]
\end{equation*}
is non-decreasing in each coordinate $t_i$, from which Proposition~\ref{prop:2} follows. For the  proof of Proposition~\ref{prop:1}, we use the chain rule and Lemma~\ref{lem:diff} to obtain that
\begin{equation*}
-\frac{d}{dt} \Qu_t^S(f)=\sum_{i\in S}\Qu_t^S(\nabla_if).
\end{equation*}
Then, the proposition follows from the fact that  $2|\nabla_i f| = \textbf{1}_{i \text{ is piv. for } f}$ and, if we assume furthermore that $f$ is increasing, $2\nabla_i f = \textbf{1}_{i \text{ is piv. for } f}$.

\begin{proof}[Proof of Lemma~\ref{lem:diff}]
The result for $i \notin S$ comes from the fact that $Y(t_1,\dots,t_n)$ does not depend on $t_i$ in this case. Let us assume that $1 \in S$ and let us prove the lemma for $i=1$. We first observe that, for each $f \, : \, \{0,1\}^n \rightarrow \{0,1\}$, $f(x)=(2x_1-1) \nabla_1f(x)+\Ex_1f(x)$ where
\[
\Ex_1f(x)=\frac{f \circ \sigma_1^1 + f \circ \sigma_1^0}{2}(x).
\]
Let us fix a function $f \, : \, \{0,1\}^n \rightarrow \{0,1\}$ and some $t_1,\ldots,t_n\in[0,1]$. Our goal is to compute the derivative at $t_1$ of the function defined by  
\[
\forall s\in[0,1] \quad \varphi(s)= \E \left[ f(X)f(Y(s,t_2,\ldots,t_n)) \right].
\]
Let $\delta \in [-1,1]$ such that $t_1 + \delta \in [0,1]$. From now, we write $Y=Y(t_1,t_2,\ldots,t_n)$ and $Y'=Y(t_1+\delta,t_2,\cdots,t_n)$. Then,
\[
\varphi(t_1+\delta)-\varphi(t_1)=\E \left[ f(X) \left(  f(Y')- f(Y) \right) \right].
\]
Since $f(x)=(2x_1-1) \nabla_1 f(x)+\Ex_1 f(x)$ and since $\nabla_1 f(x)$ and $\Ex_1 f(x)$ do not depend on $x_1$, we have
\[
f(Y')- f(Y)= 2\left( Y_1' - Y_1 \right) \nabla_1 f(Y).
\]
By using that $\nabla_1 f(x)$ and $\Ex_1f(x)$ do not depend on $x_1$ and that $\E [Y_1' - Y_1] = 0$, we then obtain that
\begin{align*}
\varphi(t_1+\delta)-\varphi(t_1) & = \E \left[ \big( (2X_1-1) \nabla_1 f(X) + \Ex_1f(X) \big) \cdot 2 \left( Y_1' - Y_1 \right) \nabla_1f(Y) \right]\\
& = 4 \E \left[ X_1 \left( Y_1' - Y_1 \right) \right] \E \left[ \nabla_1f(X) \nabla_1f(Y) \right]\\
& = -\delta  \E \left[ \nabla_1f(X) \nabla_1f(Y) \right].
\end{align*} 
As a result,
\[
-\frac{\partial}{\partial t_1}\E [ f(X)f(Y(t_1,\ldots,t_n))]=-{\varphi'} (t_1) = \E \left[ \nabla_1 f(X) \nabla_1 f(Y) \right].
\]
This concludes the first part of the lemma.

To prove that the right-hand side of \eqref{eq:6} is non-negative, we introduce another coupling of the law of $(X,Y)$ (where $Y=Y(t_1,\cdots,t_n)$ as above): We define $s_i \in [0,1]$ by $1-t_i=(1-s_i)^2$ and we let $\overline{X} \sim \Proba$. Moreover, we let $\overline{W}$ be obtained from $\overline{X}$ by resampling independently each coordinate $i \in S$ with probability $s_i$ and each coordinate $i \notin S$ with probability $1$. Furthermore, we let $\overline{Y}$ be obtained from $\overline{W}$ by resampling independently each coordinate $i \in S$ with probability $s_i$ and each coordinate $i \notin S$ with probability $1$. Then, $(\overline{X},\overline{Y})$ is a coupling of the law of $(X,Y)$. Moreover, conditionally on $\overline{W}$, $\overline{X}$ and $\overline{Y}$ are independent and have the same distribution. Therefore,
\begin{equation*}
\E \left[\nabla_i f(X) \nabla_i f(Y)\right]=\E \left[\nabla_i f(\overline{X}) \nabla_i f(\overline{Y}) \right]=\E\left[ \E \left[\nabla_i f(\overline{X}) \mid \overline{W} \right]^2\right]
\end{equation*}
is non-negative. (In other words, this non-negativity property comes from the reversibility of the dynamics.)
\end{proof}

\section{Quasi-multiplicativity for dynamical arm events}\label{sec:qm}

In this section, we state a quasi-multiplicativity property for $\pi_{m,n}^\star(t)$ that will be crucial for us and is a direct consequence of quasi-multiplicativity results from \cite{GPS10} (see Lemma~\ref{lem:qm} below). We refer to \cite{Kes87,Nol08,Wer07,SS10} for proofs of quasi-multiplicativity for $\alpha_{m,n}^\star$ (i.e.\ Proposition~\ref{prop:qm} in the special case $t=0$).
\begin{proposition}[Dynamical quasi-multiplicativity]\label{prop:qm}
Let $\star$ be an arm type. There exist $c,C>0$ such that, for every $t \in [0,1]$ and every $n \ge m \ge k \ge 1$,
\begin{align*}
& c \pi^\star_{k,n}(t) \le \pi^\star_{k,m}(t) \pi^\star_{m,n}(t) \le C\pi^\star_{k,n}(t).
\end{align*}
In particular, there exist $c,C>0$ such that, for every $t \in [0,1]$ and every $n \ge m \ge 1$,
\begin{align}
& c\frac{\pi^\star_{n}(t)}{\pi_{m}^\star(t)} \le \pi^\star_{m,n}(t) \le C \frac{\pi^\star_{n}(t)}{\pi_{m}^\star(t)}.\label{eq:38fractional}
\end{align}
\end{proposition}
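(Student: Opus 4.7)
The plan is to derive the dynamical statement from the doubled version of the GPS10 gluing results (which I will invoke as Lemma~\ref{lem:qm}). Before turning to the main inequality, observe that the ``In particular'' part~\eqref{eq:38fractional} is immediate from the double inequality applied with $k=j$: using the convention $\pi^\star_n(t)=\pi^\star_{j,n}(t)$ and $\pi^\star_m(t)=\pi^\star_{j,m}(t)$, one simply divides through by $\pi^\star_m(t)$. So everything reduces to proving
\[
c\,\pi^\star_{k,n}(t)\ \le\ \pi^\star_{k,m}(t)\,\pi^\star_{m,n}(t)\ \le\ C\,\pi^\star_{k,n}(t)
\qquad (n\ge m\ge k\ge 1,\ t\in[0,1]).
\]

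For the upper bound, the point is that the event of having $j$ arms of type $\star$ from scale $k$ to scale $n$ \emph{implies} the conjunction of the corresponding arm events from $k$ to $m$ and from $m$ to $n$ (restricting each path to the two sub-annuli preserves disjointness, types, and cyclic order). Therefore
\[
f^\star_{k,n}(X)\,f^\star_{k,n}(Y)\ \le\ \bigl(f^\star_{k,m}(X)\,f^\star_{k,m}(Y)\bigr)\cdot\bigl(f^\star_{m,n}(X)\,f^\star_{m,n}(Y)\bigr).
\]
Since the coordinate-wise resampling makes $(X_i,Y_i)_{i\in\mathcal{H}}$ an independent family, the two factors on the right depend on disjoint blocks of this family and are therefore independent. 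Taking expectations yields $\pi^\star_{k,n}(t)\le \pi^\star_{k,m}(t)\,\pi^\star_{m,n}(t)$, which is even sharper than required.

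For the lower bound, the strategy is the classical gluing argument, transported to the doubled configuration $(X,Y)$. Condition on the product event $\{f^\star_{k,m}(X)=f^\star_{k,m}(Y)=1\}\cap\{f^\star_{m,n}(X)=f^\star_{m,n}(Y)=1\}$, whose probability is exactly $\pi^\star_{k,m}(t)\,\pi^\star_{m,n}(t)$ by the independence noted above. The key input, which is exactly what Lemma~\ref{lem:qm} packages from \cite[Section~5]{GPS10}, is a \emph{simultaneous separation of arms} at the interface $\partial[-m,m]^2$: conditionally on the above event, with uniform positive probability the $j$ arm extremities of $X$ and of $Y$ on this boundary are well-separated. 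On this separation event, one can modify the configuration of both $X$ and $Y$ in a bounded-aspect-ratio dyadic annulus around scale $m$ (say between $m$ and $2m$) so as to concatenate the inner $j$-arms to the outer $j$-arms, simultaneously for $X$ and for $Y$, producing $j$ arms of type $\star$ from $k$ to $n$ in both configurations. Because the joint variables in this gluing annulus are independent of those in the inner and outer sub-annuli, the cost of the modification is a universal positive constant, giving the desired inequality $\pi^\star_{k,n}(t)\ge c\,\pi^\star_{k,m}(t)\,\pi^\star_{m,n}(t)$.

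The only real obstacle is the \emph{simultaneous} separation/gluing for the pair $(X,Y)$: in the static proof one separates and glues a single configuration, whereas here one needs it to hold for $X$ and $Y$ at once and with constants uniform in $t\in[0,1]$. This is precisely what GPS10 provide and what Lemma~\ref{lem:qm} extracts; the reason the doubled version is not strictly harder than the static one is that $(X_i,Y_i)_{i\in\mathcal{H}}$ is an i.i.d.\ family, so the argument takes place in the hypercube $\{0,1\}^\mathcal{H}\times\{0,1\}^\mathcal{H}$ with exactly the same independence and locality features as the original Kesten--GPS construction.
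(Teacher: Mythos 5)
Your proposal is correct and follows essentially the same route as the paper, which deduces Proposition~\ref{prop:qm} directly from Lemma~\ref{lem:qm} (the GPS10 quasi-multiplicativity transported to the noised/doubled configuration) applied with $S=\mathcal{H}_{k,n}$. The one small imprecision is in your upper-bound argument: as defined, the annuli $\mathcal{H}_{k,m}$ and $\mathcal{H}_{m,n}$ share the hexagons meeting $\partial[-m,m]^2$, so the two factors are not literally independent and the sharp inequality $\pi^\star_{k,n}(t)\le\pi^\star_{k,m}(t)\,\pi^\star_{m,n}(t)$ without a constant should not be claimed as stated, though this costs only a harmless multiplicative constant (and Lemma~\ref{lem:qm} gives the upper bound directly in any case).
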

Recall that $\mathcal{H}_{k,n}$ is the set of hexagons that intersect $[-n,n]^2 \setminus (-k,k)^2$ and that $\mathcal{H}_n$ is the set of hexagons that intersect $[0,n]^2$. Recall also the notation $\Qu_t^S(f^\star_{m,n})$ from the beginning of Section~\ref{sec:russo}. Proposition 5.9 and Equation (5.10) of \cite{GPS10} imply the following lemma.
\begin{lemma}[\cite{GPS10}]\label{lem:qm}
Let $\star$ be an arm type. There exist $c,C>0$ such that, for every $t \in [0,1]$, every $n \ge m \ge k \ge 1$ and every $S \subseteq \mathcal{H}_{k,n}$,
\begin{align}\label{eq:qm_dyna}
& c\Qu_t^S(f^\star_{k,n}) \le \Qu_t^S(f^\star_{k,m}) \Qu_t^S(f^\star_{m,n}) \le C\Qu_t^S(f^\star_{k,n}).
\end{align}
Moreover, there exists $c>0$ such that, for every $t \in [0,1]$, every $n \ge 1$, every $i \in \mathcal{H}_n$ that is at distance at least $n/3$ from the sides of $[0,n]^2$, and every $S \subseteq \mathcal{H}_n$,
\[
\Qu_t^S(i \text{ is piv. for } g_n) \ge c \Qu_t^S(f_n^{0101}).
\]
\end{lemma}
\begin{proof}
The left-hand inequality of \eqref{eq:qm_dyna} follows from independence on disjoint sets (the constant $c$ comes from the fact that in order to obtain independence we actually need to consider the arm events from $k$ to $m-1$ and from $m+1$ to $n$). Let us now use \cite{GPS10} in order to prove the right-hand inequality:

Proposition 5.9 of \cite{GPS10} is the quasi-multiplicativity property when the set of resampled hexagons is an arbitrary deterministic set $W\subseteq\mathcal H_{k,n}$. Let $R_t$ denote the random set of resampled hexagons when defining $Y$ from $X$ (in particular, $R_t$ contains $\mathcal{H}_{k,n} \setminus S$). By conditioning on $R_t$ and applying \cite[Prop.\ 5.9]{GPS10} to $W=R_t$, we obtain that
\[
\E \left[ \E \left[ f^\star_{k,m}(X)f^\star_{k,m}(Y) \mid R_t \right] \E \left[ f^\star_{m,n}(X)f^\star_{m,n}(Y) \mid R_t \right] \right] \le C \Qu_t^S(f^\star_{k,n}).
\]
By using independence on disjoint sets, we conclude that the left-hand side is at least equal to $c\Qu_t^S(f^\star_{k,m}) \Qu_t^S(f^\star_{m,n})$ (the reason why a constant $c$ appears is the same as in the above paragraph). This concludes the first part of Lemma \ref{lem:qm}. Similarly, the second part of Lemma~\ref{lem:qm} follows from \cite[(5.10)]{GPS10}.
\end{proof}
Proposition~\ref{prop:qm} is the first part of Lemma~\ref{lem:qm} in the special case $S=\mathcal{H}_{k,n}$. Similarly, in the special case $S=\mathcal{H}_n$, the second part of Lemma~\ref{lem:qm} implies that, for $i$ as in the lemma,
\begin{equation}\label{eq:pivGPS}
\Qu_t(i \text{ is piv. for } g_n) \ge c \pi_n(t).
\end{equation}
\begin{remark}\label{rem:4.3}
By monotonicty of $t \mapsto \pi^\star_{n,2n}(t)$ (Proposition~\ref{prop:2}) and by the Russo--Seymour--Welsh theorem, we have $\pi^\star_{n,2n}(t) \ge \pi^\star_{n,2n}(1) = (\alpha_{n,2n}^\star)^2 \ge c$. As a result, Proposition~\ref{prop:qm} implies that $\pi_{2m,n/2}^\star(t)\le C\pi_{m,n}^\star(t)$. We will use several times this kind of inequality by only referring to Proposition~\ref{prop:qm}. More generally, Lemma~\ref{lem:qm} implies that, for every $S \subseteq \mathcal{H}_{m,n}$, $\Qu_t^S(f^\star_{2m,n/2}) \le C\Qu_t^S(f^\star_{m,n})$.
\end{remark}

\section{Differential inequalities}\label{sec:diff}

In this section, we prove some differential inequalities for the dynamical arm and crossing events which are reminiscent of those obtained by Kesten in \cite{Kes87}. Recall that $g_n$ is the indicator function of the left-right crossing of $[0,n]^2$.

\begin{lemma}\label{lem:diff_g_n}
There exist $c,C>0$ such that, for every $n \geq 1$ and every $t\in [0,1]$
\[
c n^2\pi_n(t)\le -\frac{d}{dt}\COV_t(g_n)\le  C\sum_{k=1}^n \sum_{r=k}^n \pi_k(t) \pi_{k,r}^{010+}(t) \pi_{r,n}^{01+}(t).
\]
\end{lemma}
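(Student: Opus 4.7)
The plan is to apply the dynamical Margulis--Russo formula (Proposition~\ref{prop:1}) to the monotone Boolean function $g_n$, taking $S=\mathcal{H}_n$. Since $g_n$ is increasing the inequality there is an equality, and since $\COV_t(g_n)=\Qu_t(g_n)-\Qu_1(g_n)$ with $\Qu_1(g_n)$ constant in $t$, one obtains
\[
-\frac{d}{dt}\COV_t(g_n)=\frac14\sum_{i\in\mathcal{H}_n}\Qu_t(i\text{ is piv. for }g_n).
\]
Both inequalities of the lemma are then reduced to controlling this sum of pivotal contributions.

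For the \textbf{lower bound}, I restrict the sum to the $\Omega(n^2)$ hexagons $i$ lying at distance at least $n/3$ from every side of $[0,n]^2$. For each such $i$, inequality~\eqref{eq:pivGPS} (the second part of Lemma~\ref{lem:qm}) gives $\Qu_t(i\text{ is piv. for }g_n)\ge c\,\pi_n(t)$, and summing yields $-\frac{d}{dt}\COV_t(g_n)\ge c\, n^2\pi_n(t)$.

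For the \textbf{upper bound}, I fix $i\in\mathcal{H}_n$ and denote by $d_1(i)\le d_2(i)$ the two smallest of the four distances from $i$ to the sides of $[0,n]^2$. The key geometric observation is that if $i$ is pivotal for $g_n$ in a configuration $\omega$, then $\omega$ realises three arm events supported on \emph{disjoint} annuli: a full-plane four-arm event around $i$ on scales $[1,d_1(i)/3]$; a three-arm half-plane event of type $010+$ on scales $[3d_1(i),d_2(i)/3]$ (the half-plane being the one bounded by the side of $[0,n]^2$ closest to $i$); and a two-arm event on scales $[3d_2(i),n]$, which is really a quarter-plane event but is bounded above by a two-arm half-plane event of type $01+$. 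Because these three events are supported on disjoint subsets of hexagons, and the noisy pair $(X,Y)$ associated to $\Qu_t$ has independent components across disjoint coordinate sets, the quadratic form $\Qu_t$ factorises as a product on disjointly supported events. Combined with quasi-multiplicativity (Proposition~\ref{prop:qm}), which absorbs the factor-3 scale gaps into multiplicative constants, this gives
\[
\Qu_t(i\text{ is piv. for }g_n)\le C\,\pi_{d_1(i)}(t)\,\pi_{d_1(i),d_2(i)}^{010+}(t)\,\pi_{d_2(i),n}^{01+}(t).
\]
To conclude, I sum over $i\in\mathcal{H}_n$ by grouping hexagons according to the pair $(d_1(i),d_2(i))$. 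A short geometric check (using that $d_1,d_2\le n/2$ and that specifying the two closest sides plus the distances to them determines the position up to a bounded combinatorial choice) shows that for every integer pair $(k,r)$ with $1\le k\le r\le n/2$, at most $O(1)$ hexagons satisfy $(d_1(i),d_2(i))=(k,r)$. Enlarging the range of $r$ up to $n$ only increases the bound, and yields the claimed estimate.

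The main obstacle is the geometric decomposition of the pivotal event near a corner: one needs to track the transition from the full-plane four arms to half-plane three arms at scale $d_1(i)$ (when the first side of $[0,n]^2$ becomes an effective boundary) and then to two arms at scale $d_2(i)$ (when the perpendicular side is encountered). Once this decomposition is in place, the probabilistic step is entirely encapsulated in two ingredients that are already available in the paper: the independence of $\Qu_t$ across disjoint coordinate sets, which is immediate from the definition of the resampling noise, and quasi-multiplicativity of the dynamical arm probabilities (Proposition~\ref{prop:qm}).
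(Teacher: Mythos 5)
Your proof is correct and follows essentially the same route as the paper's: the dynamical Margulis--Russo formula with $S=\mathcal H_n$, the lower bound via \eqref{eq:pivGPS} over the $\Omega(n^2)$ bulk hexagons, and the upper bound via the usual Kesten-style decomposition of a pivotal hexagon into a four-arm event around $i$, a three-arm half-plane event near the closest side, and a two-arm (quarter-plane, bounded by half-plane $01+$) event near the closest corner, with spatial factorisation of $\Qu_t$ and quasi-multiplicativity to absorb the finite scale gaps. Your $(d_1(i),d_2(i))$ parametrisation and the paper's $(k,r)=(\mathrm{dist}(i,i_0),\mathrm{dist}(i_0,i_1))$ parametrisation are the same thing up to constants, and both give $O(1)$ hexagons per pair, so the counting agrees as well.
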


\begin{proof}
Recall that $\mathcal{H}_n$ is the set of hexagons that intersect $[0,n]^2$. By applying Proposition~\ref{prop:1} to the increasing function $g_n \, : \, \{0,1\}^{\mathcal{H}_n} \rightarrow \{0,1\}$ (and to $S=\mathcal{H}_n$), we obtain that
\begin{equation}\label{eq:diff_g_n}
-\frac{d}{dt} \COV_t(g_n) = -\frac{d}{dt} \Qu_t(g_n) = \frac 1 4 \sum_{i\in \mathcal{H}_n} \Qu_t ( i \text{ is piv. for } g_n).
\end{equation}
Equations \eqref{eq:pivGPS} and \eqref{eq:diff_g_n} imply the lower bound of Lemma~\ref{lem:diff_g_n}. Let us prove the upper bound. Let $i \in \mathcal{H}_n$, let $i_0$ denote the closest point on $\partial [0,n]^2$ and let $i_1$ denote the closest corner of $[0,n]^2$. Let $k$ be the distance between $i$ and $i_0$ and $r$ be the distance between $i_0$ and $i_1$. If $i$ is pivotal, then there is a four-arm event centered at $i$ from $1$ to $k/4$, an arm event of type $010+$ or $101+$ centered at $i_0$ from $4k$ to $r/4$ and an arm event of type $01+$ or $10+$ centered at $i_1$ from $4r$ to $n$ (actually there even is such an arm event in the quarter-plane). See Figure~\ref{fig:diff_cross} (in this figure, as well as in all the figures of the present section, we have drawn arm events at only one time, but these events occur at two different times simultaneously). This observation, spatial independence, and then the quasi-multiplicativity property Proposition~\ref{prop:qm} imply that the right-hand side of \eqref{eq:diff_g_n} is at most
\[
C\sum_{k=1}^n \sum_{r=k}^n \pi_{k/4}(t) \pi_{4k,r/4}^{010+}(t) \pi_{4r,n}^{01+}(t) \le C\sum_{k=1}^n \sum_{r=k}^n \pi_{k}(t) \pi_{k,r}^{010+}(t) \pi_{r,n}^{01+}(t). \qedhere
\]
\end{proof}

\begin{figure}[h!]
\begin{center}
\includegraphics[height=12em]{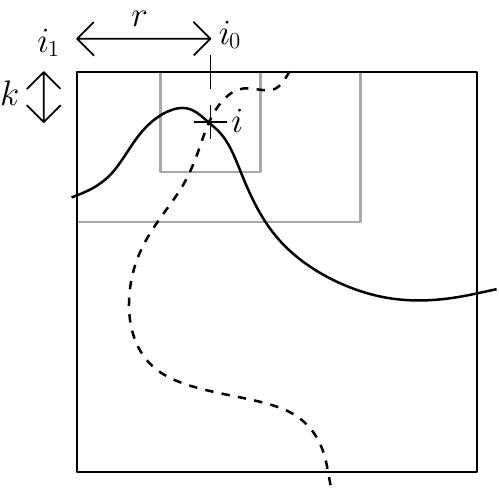}
\caption{\small The pivotal event in Lemma~\ref{lem:diff_g_n}.\label{fig:diff_cross}}
\end{center}
\end{figure}

As in \cite{Kes87}, we consider a modified version of the arm events probabilities in order to being able to ignore the effect of the points close the boundary. Recall that $\mathcal{H}_{m,n}$ is the set of hexagons that intersect the annulus $[-n,n]^2 \setminus (-m,m)^2$ and recall the notation $\Qu_t^S$ from the beginning of Section~\ref{sec:russo}. For any arm type $\star$, we let
\[
\widetilde{\pi}^\star_{m,n}(t) = \Qu_t^{\mathcal{H}_{2m,n/2}} (f^\star_{m,n}), \quad \widetilde{\pi}_{m,n}(t)=\widetilde{\pi}^{0101}_{m,n}(t).
\]
\begin{lemma}\label{lem:tilde}
For any arm type $\star$ (i.e.\ $\star=\sigma$ or $\sigma+$), there exists $c>0$ such that, for every $n \ge m \ge 1$ and every $t \in [0,1]$,
\[
c\pi^\star_{m,n}(t) \le \widetilde{\pi}^\star_{m,n}(t) \le \pi^\star_{m,n}(t).
\]
\end{lemma}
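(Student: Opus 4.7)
The upper bound $\widetilde\pi^\star_{m,n}(t) \le \pi^\star_{m,n}(t)$ is immediate from the monotonicity of $S \mapsto \Qu_t^S(f)$ in $S$ (Proposition~\ref{prop:2}) together with the inclusion $\mathcal{H}_{2m,n/2} \subseteq \mathcal{H}_{m,n}$. For the lower bound I may assume $n \ge 4m$, since otherwise both quantities are bounded above and below by constants by RSW, so the inequality is trivial.

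My plan is to apply Lemma~\ref{lem:qm} twice on each side, cutting $f^\star_{m,n}$ at the intermediate scales $2m$ and $n/2$, and to match the two decompositions through a common bulk factor $\Qu_t^{\mathcal{H}_{2m,n/2}}(f^\star_{2m,n/2})$. Throughout I will use the identity $\Qu_t^S(f^\star_{k,n}) = \Qu_t^{S\cap \mathcal{H}_{k,n}}(f^\star_{k,n})$, which holds because the value of $f^\star_{k,n}$ is unchanged when coordinates outside $\mathcal{H}_{k,n}$ are resampled. Applying the upper-bound direction of Lemma~\ref{lem:qm} with $S = \mathcal{H}_{m,n}$ at scale $2m$, then with $S = \mathcal{H}_{2m,n}$ at scale $n/2$ (after invoking the support identity to turn $\Qu_t^{\mathcal{H}_{m,n}}(f^\star_{2m,n})$ into $\Qu_t^{\mathcal{H}_{2m,n}}(f^\star_{2m,n})$), and bounding the two boundary factors $\Qu_t^{\mathcal{H}_{m,n}}(f^\star_{m,2m})$ and $\Qu_t^{\mathcal{H}_{2m,n}}(f^\star_{n/2,n})$ by $1$, I obtain
\[
\pi^\star_{m,n}(t) \le C\, \Qu_t^{\mathcal{H}_{2m,n/2}}(f^\star_{2m,n/2}).
\]

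Applying instead the lower-bound direction of Lemma~\ref{lem:qm} twice with $S = \mathcal{H}_{2m,n/2}$ (which lies in $\mathcal{H}_{m,n}$ and in $\mathcal{H}_{2m,n}$, so the hypothesis is met both times), I get
\[
\widetilde\pi^\star_{m,n}(t) \ge c\, \Qu_t^{\mathcal{H}_{2m,n/2}}(f^\star_{m,2m})\, \Qu_t^{\mathcal{H}_{2m,n/2}}(f^\star_{2m,n/2})\, \Qu_t^{\mathcal{H}_{2m,n/2}}(f^\star_{n/2,n}).
\]
The crucial observation is that the two boundary factors are bounded below by universal constants: since $\emptyset \subseteq \mathcal{H}_{2m,n/2}$, Proposition~\ref{prop:2} yields
\[
\Qu_t^{\mathcal{H}_{2m,n/2}}(f^\star_{m,2m}) \ge \Qu_t^\emptyset(f^\star_{m,2m}) = (\alpha^\star_{m,2m})^2 \ge c,
\]
where the middle equality uses that resampling every coordinate produces an independent copy, and the last inequality is standard RSW for arm events between scales of bounded ratio; the same argument handles $f^\star_{n/2,n}$. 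Combining the two displays gives $\widetilde\pi^\star_{m,n}(t) \ge c'\, \pi^\star_{m,n}(t)$.

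The only thing requiring care is the hypothesis $S \subseteq \mathcal{H}_{k,n}$ in Lemma~\ref{lem:qm}: at each invocation I must either verify the inclusion directly, or first use the support-restriction identity to rewrite $\Qu_t^S(f^\star_{k,n})$ as $\Qu_t^{S\cap \mathcal{H}_{k,n}}(f^\star_{k,n})$ so that the hypothesis applies. Apart from this bookkeeping, the argument is routine.
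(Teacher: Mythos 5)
Your proof is correct and takes essentially the same route as the paper: both hinge on the $\Qu_t^S$--quasi-multiplicativity of Lemma~\ref{lem:qm} applied at the intermediate scales $2m$ and $n/2$, together with RSW to absorb the boundary annuli. The only cosmetic difference is your first display $\pi^\star_{m,n}(t) \le C\, \Qu_t^{\mathcal{H}_{2m,n/2}}(f^\star_{2m,n/2})$, which the paper obtains instead from the trivial pointwise inclusion $f^\star_{m,n}\le f^\star_{2m,n/2}$ (giving $\pi^\star_{m,n}(t)\le\pi^\star_{2m,n/2}(t)=\Qu_t^{\mathcal{H}_{2m,n/2}}(f^\star_{2m,n/2})$ with no constant), so your extra invocations of quasi-multiplicativity there are unnecessary though harmless.
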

\begin{proof}
Proposition~\ref{prop:2} implies the second inequality. Moreover, by Lemma~\ref{lem:qm} (and Remark \ref{rem:4.3})
\[
\widetilde{\pi}^\star_{m,n}(t) = \Qu_t^{\mathcal{H}_{2m,n/2}}(f_{m,n}^\star) \ge c \Qu_t^{\mathcal{H}_{2m,n/2}}(f_{2m,n/2}^\star) = c \pi^\star_{2m,n/2}(t) \ge c \pi^\star_{m,n}(t). \qedhere
\]
\end{proof}

Let us now prove a differential inequality for the four-arm event. (As we will observe below, one can show this differential inequality -- with the same proof -- for any alternating arm event in the plane.)

\begin{lemma}[Differential inequalities for the four-arm event]\label{lem:diff_pi}
There exists $C>0$ such that, for every $n \ge m \ge 1$,
\[
0 \leq -\phi'\le C\phi \sum_{k=m}^n k\pi_k,
\]
where $\phi=\widetilde{\pi}_{m,n}$.
\end{lemma}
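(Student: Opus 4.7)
The plan is to adapt Kesten's differential-inequality strategy for four-arm probabilities \cite{Kes87} to the dynamical setting, using the tools from Sections~\ref{sec:russo} and~\ref{sec:qm}. First I apply Proposition~\ref{prop:1} directly to $\phi(t)=\widetilde{\pi}_{m,n}(t)=\Qu_t^{\mathcal{H}_{2m,n/2}}(f_{m,n}^{0101})$, with $f=f_{m,n}^{0101}$ and $S=\mathcal{H}_{2m,n/2}$. The non-negativity $-\phi'\ge 0$ is immediate (it also follows from Proposition~\ref{prop:2}) and the upper bound in Proposition~\ref{prop:1} reduces the problem to showing
\[
\tfrac{1}{4}\sum_{i\in\mathcal{H}_{2m,n/2}}\Qu_t^{\mathcal{H}_{2m,n/2}}\!\left(i\text{ is piv.\ for }f_{m,n}^{0101}\right)\le C\phi(t)\sum_{k=m}^n k\pi_k(t).
\]

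For each $i\in\mathcal{H}_{2m,n/2}$, let $d_i=\mathrm{dist}(i,\partial[-m,m]^2\cup\partial[-n,n]^2)$; the inclusion $i\in\mathcal{H}_{2m,n/2}$ forces $d_i\ge m$. If $i$ is pivotal for $f_{m,n}^{0101}$ simultaneously in $X$ and $Y$, then in each configuration there is an alternating four-arm event at $i$ inside the ball $B(i,d_i/2)$ (which lies strictly inside the annulus), \emph{and} outside this ball the four arms must extend to complete the global event $f_{m,n}^{0101}$. Since the hexagons inside $B(i,d_i/2)$ are disjoint from those outside, $\Qu_t^S$ factorizes over these two regions; bounding the local factor by $\pi_{d_i/2}(t)\le C\pi_{d_i}(t)$ (via translation invariance and Proposition~\ref{prop:qm}) gives a pointwise bound of the form
\[
\Qu_t^S(i\text{ is piv.\ for }f_{m,n}^{0101})\le C\,\pi_{d_i}(t)\,\widetilde{\pi}_{m,n}(t)\,\rho_i(t),
\]
where $\rho_i(t)$ is a geometric factor encoding the constraint that the four arms of $f_{m,n}^{0101}$ actually transit $i$'s vicinity rather than some other part of the annulus.

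To conclude I sum over $i$ and group by $d_i\asymp k$. The naive count of hexagons on the ring $\{d_i\asymp k\}$ is of order $k(m+n)$, which is too large by a factor of order $m+n$; the role of the factor $\rho_i$ is precisely to absorb this excess and reduce the effective contribution to $\asymp k$ per scale, producing exactly the sum $\sum_{k=m}^n k\pi_k(t)$. This is the dynamical analogue of Kesten's near-critical observation that the "pivotal density" along an arm at scale $k$ is governed by a three-arm event rather than by the area of a $k$-ring (see also \cite{Wer07,Nol08}); in the present setting the relevant static estimate is upgraded to $\Qu_t^S$ via Lemma~\ref{lem:qm}.

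The main obstacle is this last bookkeeping step. The naive pointwise bound $\Qu_t^S(i\text{ is piv.})\le C\pi_{d_i}(t)\widetilde{\pi}_{m,n}(t)$ is insufficient once summed, and one has to split the "extension" event into an inner arm event from $\partial B(i,d_i/2)$ to $\partial[-m,m]^2$ and an outer arm event from $\partial B(i,d_i/2)$ to $\partial[-n,n]^2$, control each via the dynamical quasi-multiplicativity of Proposition~\ref{prop:qm} and Lemma~\ref{lem:qm}, and carefully track the three-arm density factor that produces the $\asymp k$ gain per ring. Carrying this out rigorously in the $\Qu_t^S$ setting, with the correct orientation of arms and boundary effects, constitutes the technical heart of the proof.
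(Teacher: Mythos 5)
Your overall setup is right---apply Proposition~\ref{prop:1} with $f=f_{m,n}^{0101}$ and $S=\mathcal{H}_{2m,n/2}$, pass from $\Qu_t^S$ to $\Qu_t$ via Proposition~\ref{prop:2}, and extract a local four-arm event at $i$ independent of the rest of the annulus. But your decomposition step contains a genuine gap, which you acknowledge yourself (``Carrying this out rigorously\ldots constitutes the technical heart of the proof''): the factor $\rho_i(t)$ is never defined, and you never produce the three-arm/density mechanism you gesture at. As written, the bound $\Qu_t^S(i\text{ piv.})\le C\,\pi_{d_i}(t)\,\widetilde{\pi}_{m,n}(t)\,\rho_i(t)$ is a placeholder, not an estimate.

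The underlying source of trouble is the choice of parametrization, and an associated miscount. You group hexagons by $d_i=\mathrm{dist}(i,\partial[-m,m]^2\cup\partial[-n,n]^2)$. For $i\in\mathcal{H}_{2m,n/2}$ one has $d_i=|i|_\infty-m$, so the unit-width ring $\{d_i=k\}$ has $\asymp m+k\asymp k$ hexagons (using $k\ge m$), \emph{not} order $k(m+n)$ as you claim; the factor $m+n$ you wish $\rho_i$ to absorb does not actually arise. This confusion is what leads you to expect a Kesten-style three-arm ``pivotal density'' factor. In fact no such factor is needed for the annulus four-arm event. The paper's proof instead parametrizes $i$ by $r=|i|_\infty$ (distance to the origin), so that the ring $\{|i|_\infty=r\}$ has $\asymp r$ hexagons, and observes that if $i$ at distance $r$ is pivotal then three events on pairwise disjoint hexagon sets occur simultaneously: a four-arm event at $i$ from $1$ to $r/4$, the four-arm event $f_{m,r/4}^{0101}=1$, and the four-arm event $f_{4r,n}^{0101}=1$. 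By spatial independence $\Qu_t$ factorizes over these three regions, and quasi-multiplicativity (Proposition~\ref{prop:qm}) gives $\pi_{m,r/4}(t)\pi_{4r,n}(t)\pi_{r/4}(t)\le C\pi_{m,n}(t)\pi_r(t)$; summing over the $\asymp r$ hexagons on each ring and over $r$ directly yields $C\pi_{m,n}(t)\sum_{k=m}^n k\pi_k(t)$, and Lemma~\ref{lem:tilde} converts $\pi_{m,n}$ to $\widetilde{\pi}_{m,n}$. No geometric density factor and no awkward arm event in ``annulus minus a ball'' domain is required. The three-arm density bookkeeping you invoke is what is needed for the crossing event $g_n$ (Lemma~\ref{lem:diff_g_n}) and for the half-plane/general arm events (Lemmas~\ref{lem:1} and~\ref{lem:2}), where the pivotal hexagon can sit near a boundary; it is not the right tool for the plain four-arm event in an annulus, where the ``split the annulus into inner, local and outer pieces and use quasi-multiplicativity'' argument closes cleanly on its own.
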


\begin{proof}
If we apply Proposition~\ref{prop:1} to $f=f_{m,n}^{0101}$ and $S=\mathcal{H}_{2m,n/2}$ we obtain that
\begin{multline}\label{eq:diff_alt}
0\leq-\frac{d}{dt}\widetilde{\pi}_{m,n}(t) \leq  \frac 1 4 \sum_{i\in \mathcal{H}_{2m,n/2}} \Qu_t^{\mathcal{H}_{2m,n/2}} ( i\text{ is piv. for } f_{m,n}^{0101} )\\
\overset{\text{Prop. \ref{prop:2}}}{\le} \frac 1 4  \sum_{i\in \mathcal{H}_{2m,n/2}} \Qu_t ( i\text{ is piv. for } f_{m,n}^{0101}).
\end{multline}
Let $i \in \mathcal{H}_{2m,n/2}$. If $i$ is at distance $k$ from the origin and is pivotal, then $f_{m,k/4}^{0101}=f_{4k,n}^{0101}=1$ and there is a four-arm event from $i$ to distance $k/4$ (here we use that $i \in \mathcal{H}_{2m,n/2}$; this would not be true for some $i$ close to the boundary of $\mathcal{H}_{m,n}$). See Figure~\ref{fig:diff_pi_al}. This observation, spatial independence, and then the quasi-multiplicativity property of  Proposition~\ref{prop:qm} imply that the right-hand side of \eqref{eq:diff_alt} is at most
\[
C\sum_{k=m}^{n} k \pi_{m,k/4}(t) \pi_{4k,n}(t) \pi_{k/4}(t) \le C\sum_{k=m}^n k \pi_{m,n}(t) \pi_k(t).
\]
We conclude the proof by applying Lemma~\ref{lem:tilde}.
\end{proof}

\begin{figure}[h!]
\begin{center}
\includegraphics[height=14em]{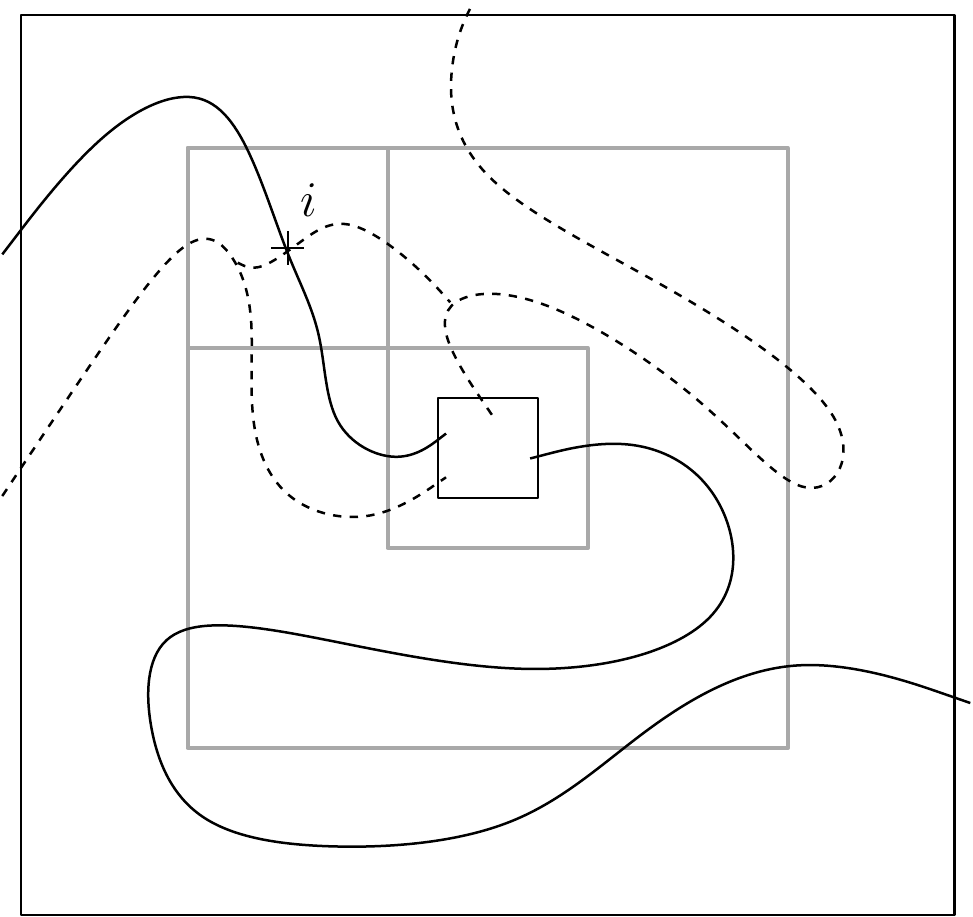}
\caption{\small The pivotal event in Lemma~\ref{lem:diff_pi}.\label{fig:diff_pi_al}}
\end{center}
\end{figure}

In the rest of this section (and in Section~\ref{sec:sharp-noise-sens}), we will rely on the computation of the universal arm events, see for instance Chapter 1 of \cite{Wer07}: Let $n \ge m \ge 1$. Then,
\begin{equation}\label{eq:39}
\alpha^{01,+}_{m,n} \asymp \frac{m}{n}, \quad \alpha^{010,+}_{m,n} \asymp \left( \frac{m}{n} \right)^2, \quad \alpha^{01001}_{m,n} \asymp \left( \frac{m}{n} \right)^2.
\end{equation}
Let us note that Reimer's inequality (see for instance \cite{Gri99}) and \eqref{eq:39} for the arm type $01001$ (together with the Russo--Seymour--Welsh theorem which implies that $\alpha_{m,n}^1 \le C(m/n)^c$) imply that
\begin{equation}\label{eq:sixarm}
\alpha^{001001}_{m,n} \le C \left( \frac{m}{n} \right)^{2+c}.
\end{equation}  
The next lemma is the analogue of Lemma~\ref{lem:diff_pi} for alternating arm events in the half-plane. 
\begin{lemma}[Differential inequalities for alternating half-plane arm events]
  \label{lem:1}
  Let $j \ge 1$ and let $\sigma \in \{0,1\}^j$ such that $\sigma_{i+1} \neq \sigma_i$ for every $1 \le i \le j-1$. There exists $C>0$ such that, for every $n \ge m \ge 1$,
  \begin{equation*}
  0\le- \phi'\le C \phi  \sum_{k=1}^n\frac {k^2 \pi_k}{\max(k,m)},
\end{equation*}
where $\phi=\widetilde{\pi}_{m,n}^{\sigma+}$.
\end{lemma}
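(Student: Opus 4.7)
The plan is to mimic the bulk case (Lemma~\ref{lem:diff_pi}), with an additional analysis near the boundary of the upper half-plane. Applying Proposition~\ref{prop:1} to $\phi = \widetilde{\pi}^{\sigma+}_{m,n}(t) = \Qu_t^{\mathcal{H}_{2m,n/2}}(f^{\sigma+}_{m,n})$, and using Proposition~\ref{prop:2} to drop the set restriction from $\Qu_t^S$ to $\Qu_t$, gives
\[
0\le -\phi'(t) \le \frac14\sum_{i \in \mathcal{H}_{2m,n/2}\cap \mathbb{H}} \Qu_t(i \text{ piv.\ for } f^{\sigma+}_{m,n}),
\]
where only hexagons in the upper half-plane $\mathbb{H}$ contribute since $f^{\sigma+}_{m,n}$ depends only on those. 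For each such $i$, I would write $K=|i|\in[2m,n/2]$ and $d=\mathrm{dist}(i,\partial\mathbb{H})\in[1,K]$, and estimate the pivotal probability in two regimes by factorizing across disjoint sets of hexagons and then recombining the global arm pieces via dynamical quasi-multiplicativity (Lemma~\ref{lem:qm}).

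In the \emph{bulk regime} $d \ge K/10$, the ball $B(i,K/4)$ stays in $\mathbb{H}$, so pivotality forces a four-arm event at $i$ up to scale $\sim K$ together with the global $j$-arm half-plane structure around the origin, yielding $\Qu_t(i\text{ piv.})\le C\pi_K(t)\phi(t)$. In the \emph{boundary regime} $d<K/10$, the local four-arm event at $i$ extends only to scale $\sim d$, beyond which $B(i,d)$ touches $\partial\mathbb{H}$ and one of the four arms is absorbed. Since $\sigma$ is alternating, an argument analogous to the bulk one shows that the surviving three arms from $i$ are of alternating colors (two halves of the pivotal arm $p$ of $\sigma$, plus one separator arm of the opposite color), so from scale $d$ around $i$ to scale $K$ we see a \emph{universal} three-arm event in the half-plane. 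Combined with quasi-multiplicativity for the outside, Proposition~\ref{prop:2} for the monotonicity of $\pi^{010+}_{d,K}(t)\le \alpha^{010+}_{d,K}$, and the value $\alpha^{010+}_{d,K}\asymp(d/K)^2$ from \eqref{eq:39}, this gives
\[
\Qu_t(i\text{ piv.}) \le C\,\pi_d(t)\,\Bigl(\frac{d}{K}\Bigr)^{2}\phi(t).
\]

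Finally, I would sum over $i$. Using that there are $\sim K$ lattice points on the upper half-circle of radius $K$ and $\sim 1$ such point with each integer boundary-distance $d\in[1,K]$, the bulk contribution is at most $C\phi(t)\sum_{K=2m}^{n/2} K\pi_K(t) = C\phi(t)\sum_{k\ge m} k^2\pi_k(t)/\max(k,m)$, while swapping the order of summation in the boundary contribution and using $\sum_{K\ge K_0}K^{-2}\le C/K_0$ with $K_0=\max(10d,2m)$ gives
\[
\sum_{K=2m}^{n/2}\sum_{d=1}^{K/10}\pi_d(t)\Bigl(\frac{d}{K}\Bigr)^{2}\phi(t) \le C\phi(t)\sum_{d=1}^{n}\frac{d^2\pi_d(t)}{\max(d,m)}.
\]
Adding the two contributions produces the claimed bound $-\phi'\le C\phi\sum_{k=1}^n k^2\pi_k/\max(k,m)$.

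The main obstacle is the topological identification in the boundary regime: one must argue carefully that for an alternating $\sigma$, the pivotal event at a point $i$ close to $\partial\mathbb{H}$ really factorizes with a \emph{universal} alternating three-arm event as the extension from scale $d$ to scale $K$. If $\sigma$ were not alternating, one of the three extension arms could be of the "wrong" color and the universal exponent $2$ would no longer apply; the alternation of $\sigma$ is precisely what guarantees that the two same-colored arms at $i$ (the two halves of the pivotal arm $p$) are separated by an opposite-colored arm on each side, so that the three arms surviving after one of them is absorbed by $\partial\mathbb{H}$ are necessarily alternating.
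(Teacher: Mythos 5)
Your proof is correct and follows essentially the same approach as the paper: apply Proposition~\ref{prop:1}, factorize the pivotal event into a local four-arm event at $i$, a universal half-plane three-arm event of type $010+$ (or $101+$) near the boundary point $i_0$, and the global half-plane $\sigma$-arm structure; then control the $010+$ piece by monotonicity (Proposition~\ref{prop:2}) and the universal exponent $\alpha^{010+}_{d,K}\asymp(d/K)^2$, and recombine via quasi-multiplicativity. The only cosmetic difference is that you split explicitly into a bulk regime ($d\gtrsim K$) and a boundary regime ($d\ll K$), whereas the paper parameterizes by $(k,r)$ with $k=\mathrm{dist}(i,\partial\mathbb H)$, $r=|i|$, and writes a single bound $\pi_k\,\pi^{010+}_{k,r}\,\phi$ that degenerates gracefully when $k\sim r$; your two cases sum to the same final estimate.
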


\begin{proof} 
As in the proof of Lemma~\ref{lem:diff_pi}, we have
\begin{equation}\label{eq:diff_half}
0\leq-\frac{d}{dt}\widetilde{\pi}^{\sigma+}_{m,n}(t) \le \frac 1 4 \sum_{i\in \mathcal{H}_{2m,n/2}} \Qu_t ( i\text{ is piv. for } f_{m,n}^{\sigma+}).
\end{equation}
Let $i \in \mathcal{H}_{2m,n/2}$, let $i_0$ denote the closest point on the boundary of the half-plane, let $r$ be the distance between $i$ and the origin and let $k$ be the distance between $i$ and $i_0$. If $i$ is pivotal, then $f_{m,r/4}^{\sigma+}=f_{4r,n}^{\sigma+}=1$, there is a four-arm event centered at $i$ from $1$ to $k/4$ and there is an arm event of type $010+$ or $101+$ centered at $i_0$ from $4k$ to $r/4$. See Figure~\ref{fig:diff_pi_half}. This implies that the right-hand side of \eqref{eq:diff_half} is at most
\[
C  \sum_{r=m}^{n} \sum_{k=1}^r \pi^{\sigma+}_{m,n}(t) \pi_k(t) \pi_{k,r}^{010+}(t).
\]
By~\eqref{eq:39} and by monotonicity of $t \mapsto \pi_{k,r}^{010+}(t)$ (Proposition~\ref{prop:2}), we have $\pi_{k,r}^{010+}(t) \le \pi_{k,r}^{010+}(0)=\alpha_{k,r}^{010+} \le C (k/r)^2$. As a result,
\begin{multline*}
-\frac{d}{dt}\widetilde{\pi}^{\sigma+}_{m,n}(t) \le C\sum_{r=m}^{n} \sum_{k=1}^r \pi^{\sigma+}_{m,n}(t) \pi_k(t) (k/r)^2\\
= C\sum_{k=1}^{n} \pi^{\sigma+}_{m,n}(t) k^2\pi_k(t) \sum_{r=\max(k,m)}^n \frac{1}{r^2} \le C \sum_{k=1}^n \pi^{\sigma+}_{m,n}(t)\frac {k^2 \pi_k(t)}{\max(k,m)}.
\end{multline*}
We conclude the proof by applying Lemma~\ref{lem:tilde}.
\end{proof}

\begin{figure}[h!]
\begin{center}
\includegraphics[height=10em]{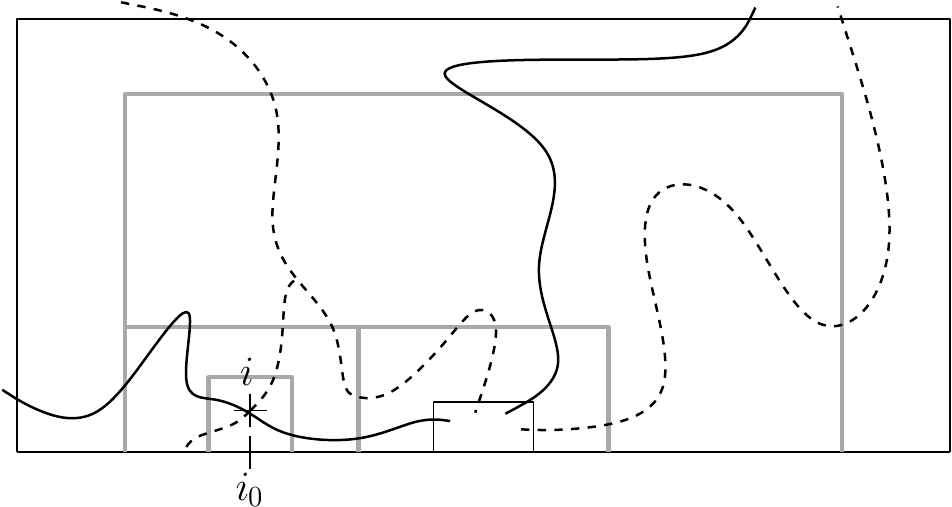}
\caption{\small The pivotal event in Lemma~\ref{lem:1}.\label{fig:diff_pi_half}}
\end{center}
\end{figure}

Let us now prove a differential inequality for general arm events. 

\begin{lemma}[Differential inequalities for general arm events]
  \label{lem:2}
Let $\star$ be any arm type (we recall that in all the paper this means that $\star=\sigma$ or $\sigma+$). There exist $c,C>0$ such that for every $n \ge m \ge 1$,
  \begin{equation}\label{eq:1}
  0\le- \phi'\le C \phi  \sum_{k=1}^n\frac {k^{1+c} \pi_k}{\max(k,m)^c},
\end{equation}
where $\phi=\widetilde{\pi}_{m,n}^{\star}$.
\end{lemma}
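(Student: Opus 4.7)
The plan is to mirror the proofs of Lemmas~\ref{lem:diff_pi} and~\ref{lem:1}, with the twist that the non-alternating cases require a Reimer-based argument rather than a universal alternating exponent. I first apply the dynamical Margulis--Russo formula (Proposition~\ref{prop:1}) to $f=f^\star_{m,n}$ with $S=\mathcal{H}_{2m,n/2}$, and use the monotonicity in $S$ from Proposition~\ref{prop:2} to replace $\Qu_t^S$ by $\Qu_t$, yielding
\[
0 \le -\frac{d}{dt}\widetilde{\pi}^\star_{m,n}(t) \le \frac{1}{4} \sum_{i \in \mathcal{H}_{2m,n/2}} \Qu_t(i \text{ is piv.\ for } f^\star_{m,n}).
\]
The task then reduces to bounding this sum by $C\widetilde{\pi}^\star_{m,n}(t)\sum_{k=1}^n k^{1+c}\pi_k(t)/\max(k,m)^c$ via a careful geometric decomposition of the pivotal event.

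If $\star=\sigma$ is a plane arm type, I fix $i$ at distance $k$ from the origin. Pivotality forces the four-arm event at $i$ on scales $[1,k/4]$ together with the $\sigma$-arm event on the annuli $[m,k/4]$ and $[4k,n]$ around the origin. Spatial independence, quasi-multiplicativity (Proposition~\ref{prop:qm}) and Lemma~\ref{lem:tilde} give a per-point pivotal bound of order $C\widetilde{\pi}^\sigma_{m,n}(t)\pi_k(t)$. Summing over the $O(k)$ hexagons at distance $k$ and over $k\in[m,n]$ yields the contribution $C\widetilde{\pi}^\sigma_{m,n}(t)\sum_{k=m}^n k\pi_k(t)$, which is absorbed in the target sum because $k=k^{1+c}/\max(k,m)^c$ for $k\ge m$.

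If $\star=\sigma+$ is a half-plane arm type, I parametrize $i$ by $r=\mathrm{dist}(i,0)$, $k=\mathrm{dist}(i,\text{half-plane boundary})$, and its boundary projection $i_0$. Pivotality forces the local four-arm at $i$ on $[1,k/4]$, the ambient $\sigma+$-arm on the annuli $[m,r/4]$ and $[4r,n]$, and a third intermediate arm event in the half-plane centered at $i_0$ on scales $[4k,r/4]$. As in Lemma~\ref{lem:1}, after two of the four alternating local arms terminate on the boundary, the two remaining upward arms combine with the ambient arms of $\sigma+$ to produce at least three arms in the half-plane at $i_0$; in the non-alternating case of a general $\sigma+$, however, these three arms need not form the universal pattern $010+$, so rather than invoking the universal three-arm exponent I bound the resulting (possibly non-alternating) three-arm half-plane event by Reimer's inequality as the product of a two-arm and a one-arm half-plane event, combining the universal two-arm estimate $\alpha^{01+}_{4k,r/4}\asymp k/r$ from~\eqref{eq:39} with the RSW one-arm bound $\alpha^{1+}_{4k,r/4}\le C(k/r)^c$. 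This yields a factor of order $(k/r)^{1+c}$ for some $c>0$. Using that each $(k,r)$-pair carries $O(1)$ hexagons, multiplying the three bounds and exchanging the order of summation gives
\[
\le C\widetilde{\pi}^{\sigma+}_{m,n}(t)\sum_{r=m}^n \sum_{k=1}^r \pi_k(t)(k/r)^{1+c} \le C\widetilde{\pi}^{\sigma+}_{m,n}(t)\sum_{k=1}^n \frac{k^{1+c}\pi_k(t)}{\max(k,m)^c},
\]
since $\sum_{r\ge\max(k,m)}r^{-(1+c)}\asymp \max(k,m)^{-c}$.

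The main obstacle is the structural claim underlying the three-arm half-plane event at $i_0$ in the non-alternating case: one must verify that the locally alternating four-arm at $i$, once its two downward branches terminate on the boundary, still combines with the ambient arms of $\sigma+$ to create at least three arms in the half-plane at $i_0$ on scales $[4k,r/4]$. This topological input is not literally covered by Lemma~\ref{lem:1}, but once it is established the remaining ingredients---Reimer's inequality, the universal half-plane two-arm exponent, the RSW one-arm bound, and the Fubini-type exchange of summation---are entirely routine.
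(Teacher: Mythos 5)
Your overall skeleton is correct — dynamical Margulis--Russo with $S=\mathcal{H}_{2m,n/2}$, monotonicity in $S$ to drop the superscript, geometric decomposition of the pivotal event, quasi-multiplicativity, Fubini — and the use of Reimer together with the universal half-plane two-arm exponent is in the spirit of what the paper does via~\eqref{eq:sixarm}. The alternating cases are also correctly delegated to Lemmas~\ref{lem:diff_pi} and~\ref{lem:1}.

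The gap is in the non-alternating cases, and it is exactly the step you flagged as ``not literally covered.'' You assert that pivotality of $i$ (at distance $k$ from the origin) for $f^\sigma_{m,n}$ forces the \emph{alternating} four-arm event at $i$ on the full range of scales $[1,k/4]$. This is true for alternating $\sigma$ but false in general: for non-alternating $\sigma$, two arms of the same colour can pass arbitrarily close to $i$, and the alternating four-arm structure at $i$ then only persists up to the (random) scale at which those arms pinch; beyond that scale the relevant structure is a six-arm event of type $001001$/$110110$ with $\le j-2$ defaults (plane) or a combination involving $0010+$/$1101+$ with defaults (half-plane). This is precisely why the paper's proof introduces the extra summation over a transition scale $4^k$ and invokes the superquadratic bound~\eqref{eq:sixarm} (derived from the universal five-arm exponent plus Reimer) together with a quasi-multiplicativity estimate for arm events with defaults. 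Because your per-site bound assumes a strictly smaller event than what pivotality actually implies, it underestimates $\Qu_t(i\text{ is piv.})$ and the upper bound on $-\phi'$ cannot be concluded from it. The same issue infects your half-plane case: the local four-arm at $i$ to scale $k/4$ is again not guaranteed, and the outer structure at $i_0$ is a four-arm-with-defaults half-plane event (handled via $\alpha^{0010+}$ plus Reimer), not a three-arm event; bounding a three-arm event does not produce a bound on the pivotal probability here.
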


\begin{proof}
Let us first observe that Lemma~\ref{lem:diff_pi} gives the desired result if $\star = 0101$ (and in this case we can restrict the sum to $m \le k \le n$). More generally, one can show - with exactly the same proof - that the formula of Lemma~\ref{lem:diff_pi} holds for $\star=\sigma$ for any alternating $\sigma \in \{0,1\}^j$ (i.e.\ $\sigma_{i+1} \neq \sigma_i$ for every $i \in \{1,\cdots,j\}$ where $\sigma_{j+1}:=\sigma_1$). Let us also note that Lemma~\ref{lem:1} gives the desired result (with $c=1$) if $\star=\sigma+$ for some alternating $\sigma \in \{0,1\}^j$ (i.e.\ $\sigma_{i+1} \neq \sigma_i$ for every $i \in \{1,\cdots,j-1\}$). Hence it remains to prove \eqref{eq:1} for non-alternating arm events. 
\medskip

Let us start with non-alternating arm events in the plane, i.e.\ for $\phi=\widetilde{\pi}_{m,n}^{\sigma}$ for some non-alternating $\sigma \in \{0,1\}^j$. As in the proof of Lemma~\ref{lem:diff_pi}, we have
\begin{equation}\label{eq:diff_gen}
0\leq-\frac{d}{dt}\widetilde{\pi}^{\sigma}_{m,n}(t) \le \frac 1 4 \sum_{i\in \mathcal{H}_{2m,n/2}} \Qu_t ( i\text{ is piv. for } f_{m,n}^\sigma).
\end{equation}
Let $i \in \mathcal{H}_{2m,n/2}$. We say that an event $A \subseteq \{0,1\}^{\mathcal{H}_{m,n}}$ holds with at most $d$ defaults for a configuration $x \in \{0,1\}^{\mathcal{H}_{m,n}}$ if there exists $y \in A$ that differs from $x$ in at most $d$ coordinates. If $i$ is at distance $r$ from the origin and is pivotal, then $f_{m,r/4}^{\sigma}=f_{4r,n}^{\sigma}=1$ and there exists $1 \le k \le \log_4(r)$ such that the following holds: there is a four-arm event centered at $i$ from $1$ to $4^{k-1}$ and there is an arm event of type $001001$ or $110110$ with at most $j-2$ defaults centered at $i$ from $4^{k+1}$ to $r/4$. See Figure~\ref{fig:diff_pi_gen} (we refer to \cite{Nol08} -- e.g.\ Figure 12 therein -- for similar observations).

\begin{figure}[h!]
\begin{center}
\includegraphics[height=18em]{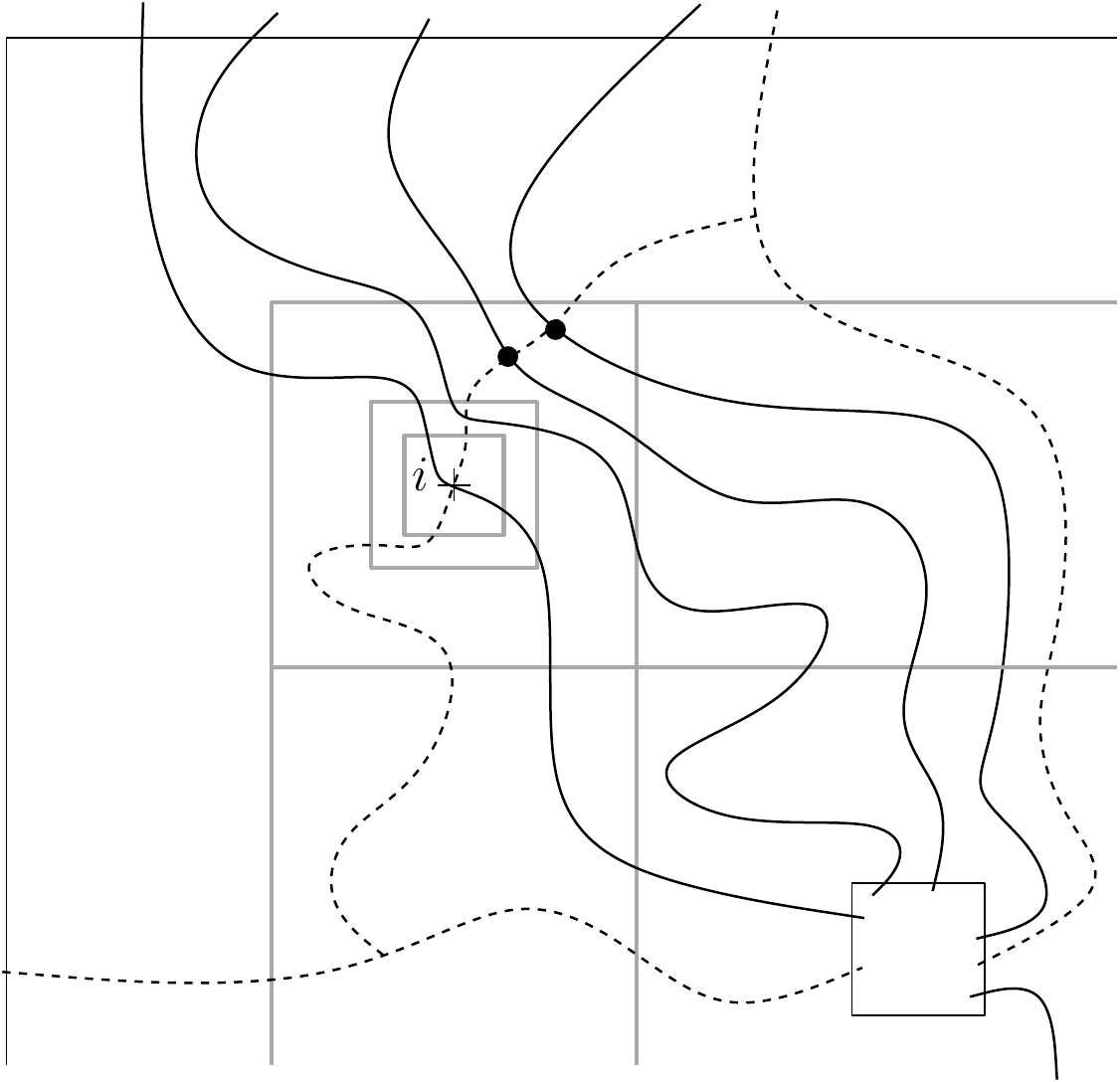}
\caption{\small The pivotal event in Lemma~\ref{lem:2}. There is a four-arm event centered at $i$ from $1$ to $4^{k-1}$ and an arm event of type $001001$ centered at $i$ from $4^{k+1}$ to $r/4$ with two defaults.\label{fig:diff_pi_gen}}
\end{center}
\end{figure}

Let $D_{m,n}$ denote the arm event of type $001001$ from $m$ to $n$ with at most $j-2$ defaults. By the quasi-multiplicativity property\footnote{Let us be more precise about the proof. One first defines a family of dyadic annuli as follows: $\mathcal{A}:=\{A_r,\dots,A_R\}$ where $A_i=[-2^{i},2^i]^2 \setminus (-2^{i-1},2^{i-1})^2$, $r=\lfloor \log_2(m) \rfloor$ and $R=\lceil \log_2(n) \rceil$. One then notes that $D_{m,n}$ is included in the union on every $(j-2)$-tuple $(A_{i_1},\dots,A_{i_{j-2}}) \in \mathcal{A}^{j-2}$ of the event that the $6$-arm event holds in $[-n,n]^2 \setminus (-m,m)^2$ minus $A_{i_1}\cup \dots \cup A_{i_{j-2}}$. One then uses the quasi-multiplicativity property $j-2$ times (this is the reason why the term $C^{j-2}$ appears) for each choice of $(j-2)$-tuple. Finally, one can conclude by using the union bound (which is the reason why the term $(1 + \log( n/m ))^{j-2}$ appears). See e.g.\ \cite[Proposition 17]{Nol08} for similar arguments.} of Proposition~\ref{prop:qm},
\[
\Qu_t(D_{m,n}) \le \left( C \left( 1 + \log( n/m ) \right) \right)^{j-2} \pi_{m,n}^{001001}(t). 
\]
By \eqref{eq:sixarm} and by monotonicity, $\pi_{m,n}^{001001}(t) \le \pi_{m,n}^{001001}(0)=\alpha_{m,n}^{001001} \le C (m/n)^{2+c}$. As a result,
\[
\Qu_t(D_{m,n}) \le \big( C ( 1 + \log( n/m ) ) \big)^{j-2} \left( \frac{m}{n} \right)^{2+c} \le C \left( \frac{m}{n} \right)^{2+c}.
\]
(Recall that the constants $c$ and $C$ are not necessarily the same at each occurrence, even within the same formula.) As in the proof of Lemma~\ref{lem:1}, these observations imply that the right-hand side of \eqref{eq:diff_gen} is at most
\[
C\sum_{r=m}^{n} r \sum_{k=1}^{\log_4(r)} \pi^{\sigma}_{m,n}(t) \pi_{4^k}(t) \left( \frac{4^k}{r} \right)^{2+c} \le C \sum_{r=m}^{n} r \sum_{k'=1}^{r} \frac{1}{k'} \pi^{\sigma}_{m,n}(t) \pi_{k'}(t) \left( \frac{k'}{r} \right)^{2+c}.
\]
We end the proof by permuting the sums and then applying Lemma~\ref{lem:tilde}.

\medskip
It only remains to prove the result for non-alternating arm events in the half-plane, i.e.\ for $\phi=\widetilde{\pi}_{m,n}^{\sigma+}$ for some non-alternating $\sigma$. Let $i \in \mathcal{H}_{2m,n/2}$, let $i_0$ denote the closest point on the boundary of the half-plane, let $r$ be the distance between $i$ and the origin and let $k$ be the distance between $i$ and $i_0$. If $i$ is pivotal then $f_{m,r/4}^{\sigma}=f_{4r,n}^{\sigma}=1$ and one of the following two unions of arm events hold in annuli at larger and larger scales centered at $i$ or $i_0$: either (1) a four-arm event, an event of type $001001$ (or $110110$) with defaults and an event of type $0010+$ (or $1101+$) with defaults, or (2) a four-arm event, an event of type $010+$ (or $101+$) and an event of type $0010+$ (or $1101+$) with defaults. Since the proof is essentially a combination of the proof for non-alternating arm events in the plane and of the proof of Lemma~\ref{lem:1} (and since this specific result is only used to obtain Theorem~\ref{thm:arm} for non-alternating arm events in the half-plane), we leave the details to the reader.
\end{proof}

\section{Superquadratic decay above the sensitivity length}\label{sec:new-proof-sharp}

In this section, we prove the following bound for the dynamical four-arm event, which is the central proposition of our paper. Recall that $\ell(t)$ is the sensitivity length (defined in \eqref{eq:sens_length}).

\begin{proposition}[Superquadratic behaviour above the sensitivity length]\label{prop:super_quadratic}
There exist $c,C>0$ such that, for every $t \in [0,1]$ and every $n \ge m \ge \ell(t)$,
  \begin{equation}
    \label{eq:11}
    \frac{\pi_n(t)}{\pi_m(t)}\le C\left(\frac m n\right)^{2+c}.
  \end{equation}
\end{proposition}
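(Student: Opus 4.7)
The plan is to combine three ingredients: the integrated covariance inequality (coming from Lemma~\ref{lem:diff_g_n} together with $\COV_t(g_n) \le 1$), dynamical quasi-multiplicativity (Proposition~\ref{prop:qm}), and the monotonicity of $s \mapsto \pi_{m,n}(s)$ (Proposition~\ref{prop:2}). This follows the blueprint of Section~\ref{sec:sketch}: Equation~(2.3) of the sketch is the analytic input, while monotonicity together with quasi-multiplicativity replace the idealised power-law assumption on $\pi_n$ used there heuristically.

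First, from Lemma~\ref{lem:diff_g_n} I have $cn^2\pi_n(s) \le -\frac{d}{ds}\COV_s(g_n)$, so integrating on $[t,1]$ and using $\COV_t(g_n) \le 1$ yields $\int_t^1 n^2\pi_n(s)\,ds \le C$ for every $t\in[0,1]$ and every $n$. For $n \ge m \ge \ell(t)$, quasi-multiplicativity rewrites this as $\int_t^1 n^2 \pi_m(s)\pi_{m,n}(s)\,ds \le C$, and the monotonicity of $s\mapsto \pi_{m,n}(s)$ --- applied on the interval $[0,t]$, where $\pi_{m,n}(s) \ge \pi_{m,n}(t)$ --- produces
\[
n^2 \pi_{m,n}(t) \int_0^t \pi_m(s)\,ds \le C.
\]
Lower bounding $\int_0^t \pi_m(s)\,ds \ge c/m^2$ (using that $\pi_m$ cannot decay too quickly on a non-trivial subinterval near $0$, a partial stability statement obtainable from monotonicity together with the pointwise bound $\pi_m(s) \le C/(sm^2)$ derived from the same integrated inequality) yields the preliminary quadratic bound $\pi_{m,n}(t) \le C(m/n)^2$.

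The main obstacle is upgrading this quadratic estimate to the strict superquadratic bound $(m/n)^{2+c}$. I would mirror the heuristic argument from the sketch: if $\pi_{m,n}(t) \asymp (m/n)^\beta$ were a clean power law, then plugging this ansatz into $\int_{\varepsilon_n}^1 n^2\pi_n(s)\,ds \le C$ and evaluating via the change of variables $s=\varepsilon_k$ would force $\beta > 2$ strictly, with a quantitative gap coming from the fact that otherwise the integral diverges either logarithmically or polynomially in $n$. Without the clean power law, the argument is run by contradiction: supposing $\pi_{m,n}(t) \ge K(m/n)^2$ for arbitrary $K$, one propagates this lower bound to all intermediate scales $m \le m' \le n$ using quasi-multiplicativity and to all times $s \le t$ using monotonicity, and thereby forces $\int_{\varepsilon_n}^1 n^2\pi_n(s)\,ds$ to exceed $C$ as soon as $K$ is large enough. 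The delicate point, and the technical heart of the proof, is ensuring that the resulting exponent gain $c$ can be chosen uniformly in $t, m, n$; this is where the strictly subquadratic lower bound~\eqref{eq:fourarm_lower_back} on the static four-arm probability is crucial, as it prevents the static four-arm from saturating the quadratic threshold and thereby guarantees a strictly positive universal gap $c>0$ throughout the region $\{n \ge m \ge \ell(t)\}$.
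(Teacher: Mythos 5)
Your proposal correctly assembles the three analytic inputs that the paper's proof actually rests on --- the integrated bound $\int_0^1 n^2\pi_n(s)\,ds\le C$ from Lemma~\ref{lem:diff_g_n}, quasi-multiplicativity (Proposition~\ref{prop:qm}), and monotonicity of $s\mapsto\pi_{m,n}(s)$ --- and the first part of your argument (using the stability estimate to lower-bound $\int_0^t\pi_m(s)\,ds$ by $c/m^2$ when $m\ge\ell(t)$, hence $\pi_{m,n}(t)\le C(m/n)^2$) is sound and matches the paper's Steps~1 and~2 in spirit. However, there is a genuine gap in the passage from the quadratic bound to the superquadratic one, which is precisely the heart of the proposition. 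Your contradiction argument, as stated, shows that if $\pi_{m,n}(t)\ge K(m/n)^2$ then $\int_{\varepsilon_n}^1 n^2\pi_n(s)\,ds\gtrsim K\log(\varepsilon_m/\varepsilon_n)\gtrsim K\log(n/m)$ (where \eqref{eq:fourarm_lower_back}, via \eqref{eq:u_m/u_n}, is exactly what makes the $\log(n/m)$ factor appear), so comparing with the constant upper bound only yields the logarithmic refinement $\pi_{m,n}(t)\le \tfrac{C}{\log(n/m)}(m/n)^2$. This is still not $(m/n)^{2+c}$, and you never explain how to convert a logarithmic gain into a power gain.

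The missing step is a scale-by-scale iteration: one must fix a ratio $\lambda$ large enough that the $\log$-improved bound gives $\pi_{m',\lambda m'}(t)\le\lambda^{-(2+c)}$ for all $m'\ge C\ell(t)$, and then telescope $\pi_{m,n}(t)\lesssim\prod_{i=1}^k\pi_{\lambda^{i-1}m,\,\lambda^i m}(t)$ with $n=\lambda^k m$ to turn the per-scale constant factor $<\lambda^{-2}$ into the polynomial $(m/n)^{2+c}$. This is Step~3 of the paper's proof, and without it your ``delicate point'' about choosing $c$ uniformly is not addressed --- your contradiction argument has no mechanism to produce an exponent, only a constant or a $\log$ factor. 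You have the right ingredients and the right intuition about \emph{why} quadratic decay is impossible (the integral would diverge logarithmically), but you stop just short of the mechanism that actually realizes the superquadratic exponent.
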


The proof of the proposition above  will involve only elementary analysis arguments, and rely on properties of the two variable-function $(t,n)\mapsto\pi_n(t)$ that have been established in the previous sections. Before starting the proof, we gather the three main properties \textbf{(\ref{eq:12}-\ref{eq:14})} that we will use.  

As mentioned earlier, our first main ingredient is the differential inequality satisfied by the dynamical four-arm probability from Lemma~\ref{lem:diff_pi}. By integrating this differential inequality, and then  using Lemma~\ref{lem:tilde} and the quasi-multiplicativity property~\eqref{eq:38fractional}, we obtain the following integral inequality: For every $n\ge 1$ and every  $0\le t\le u\le 1$,
\begin{equation}
  \label{eq:12}
  \tag{\textbf{P1}} 1\le \frac{\pi_n(t)}{\pi_n(u)}\le C \exp\left(C\int_t^u\sum_{k=1}^nk\pi_k(s)ds\right).  
\end{equation}
Second, using that $cn^2\pi_n$ is a lower bound on the derivative with respect to $t$ of $\COV_t(g_n)$ where $g_n$ is the indicator of the crossing event (see Lemma~\ref{lem:diff_g_n}), and integrating for $t$ from $0$ to $1$, we  get an other integral inequality: For every $n\ge 1$,
\begin{equation}
  \label{eq:13}
  \int_0^1 n^2 \pi_n(t) dt \le C. \tag{\textbf{P2}}
\end{equation}
Third, using monotonicity of $t \mapsto \pi_{m,n}(t)$ (Proposition~\ref{prop:2}) and quasi-multiplicativity  (Proposition~\ref{prop:qm}), we obtain the following monotonicity property. For every $n\ge m\ge 1$ and every  $0\le t\le u\le 1$,
\begin{equation}
  \label{eq:14}
\pi_n(u)\pi_m(t)\le C \pi_m(u)\pi_n(t).  \tag{\textbf{P3}}
\end{equation}
This last property can be thought as a ``quasi-monotonicity'' property  of the ratio $\pi_m/\pi_n$ for fixed $m\le n$  (in the sense that $t\le u$ implies $C\pi_n(t)/\pi_m(t) \ge  \pi_n(u)/\pi_m(u)$) or, equivalently,  as a quasi-monotonicity in $n$ of the ratio $\pi_n(t)/\pi_n(u)$ for fixed $t\le u$.

We list below the properties about the static four-arm event that we use in this section. We first recall that $\varepsilon_n = \tfrac{1}{n^2\alpha_n}$, and therefore, by \eqref{eq:fourarm_lower_back}, we have  
\begin{equation}\label{eq:u_m/u_n}
\frac{\varepsilon_m}{\varepsilon_n} \ge c\left( \frac{n}{m} \right)^c.
\end{equation}
The two functions $n \in \N^* \mapsto \varepsilon_n$ and $t \in [0,1] \mapsto \ell(t)$ can be thought as reciprocal decreasing bijections even if this is not exactly true (note that $t \mapsto \ell(t)$ is clearly  non-increasing, while $\varepsilon_n$ has no reason to be non-increasing in $n$). The following weaker statements, which can be deduced from the definitions and \eqref{eq:u_m/u_n}, will be sufficient for the proof:
\begin{equation}\label{eq:bij}
ct \le \varepsilon_{\ell(t)} = \frac{1}{\ell(t)^2\alpha_{\ell(t)}} \le t;
\end{equation}
\begin{equation}
  \label{eq:23}
  n\ge C_1 m \implies  \varepsilon_n \le  \varepsilon_m,
\end{equation}
where $C_1$ is some positive constant that we fix until the end of the section. (The first inequality from \eqref{eq:bij} comes from the fact that $\varepsilon_{\ell(t)} \ge c\varepsilon_{\ell(t)-1} \ge ct$, using $\alpha_{\ell(t)}\le \alpha_{\ell(t)-1}$, the second inequality holds by definition of $\ell(t)$, and \eqref{eq:u_m/u_n} implies \eqref{eq:23}.)

\begin{proof}[Proof of Proposition~\ref{prop:super_quadratic}]
The proof is decomposed into three steps. First, we use \eqref{eq:12} to   show that  the dynamical four-arm probability $\pi_n(t)$ behaves like $\alpha_n$ under the sensitivity length (i.e. for $n\le \ell(t)$), and then we use this estimate together with (\ref{eq:13}-\ref{eq:14}) and the polynomial lower bound \eqref{eq:u_m/u_n} to conclude the proof in two steps: first a logarithmic, then the promised polynomial improvement.

\medskip 
\noindent \textbf{Step 1: Stability of $\pi_n(t)$  below the sensitivity length.}

\smallskip
\noindent By monotonicity, we  have $\pi_n(s)\le \pi_n(0)=\alpha_n$ for every $s \in [0,1]$. Therefore, for every $t\in[0,1]$ and every $n \ge 1$, we have 
\begin{align*}
 \int_0^t \sum_{k=1}^nk \pi_k(s)ds \leq t\sum_{k=1}^nk\alpha_k  \overset{~\eqref{eq:fourarm_lower_back}}{\le} C t n^2\alpha_n =  C\frac t {\varepsilon_n}.
\end{align*}
The above is less than a constant if $t \leq \varepsilon_n$ or if $n \leq \ell(t)$ (indeed, if $n < \ell(t)$ then $t/\varepsilon_n < 1$ and if $n = \ell(t)$ then $t/\varepsilon_n \le C$ by \eqref{eq:bij}). Hence, Property \eqref{eq:12} implies that for every $t\in[0,1]$ and every $n \ge 1$ such that $n \le \ell(t)$ or $t \le \varepsilon_n$, we have
\begin{align}
  \label{eq:15}
  \alpha_n\ge \pi_n(t)&\ge c\alpha_n.
\end{align}

\medskip
\noindent \textbf{Step 2: Improvement by a log-factor.}
\smallskip

\noindent In this step, we prove that there exists a constant $C_2>0$ such that for every $t\in [0,1]$, $m\ge C_2\ell(t)$ and $n\ge C_2 m$,
\begin{equation}
  \label{eq:22}
  \frac{\pi_n(t)}{\pi_m(t)}\le \frac{C_2}{\log(n/m)} \cdot \left( \frac m n \right)^2.
\end{equation}
Let us fix $t\in [0,1]$. It follows from \eqref{eq:23} and \eqref{eq:bij} that $\varepsilon_m \le t$ for every $m\ge C_1 \ell(t)$. Hence by \eqref{eq:14}, we have for every $n\ge m\ge C_1 \ell(t)$,
\begin{equation}
  \label{eq:24}
  \frac{\pi_n(t)}{\pi_m(t)}\le C \frac{\pi_n(\varepsilon_m)}{\pi_m(\varepsilon_m)}\overset{\eqref{eq:15}}\le  C \frac{\pi_n(\varepsilon_m)}{\alpha_m},
\end{equation}
and by~\eqref{eq:13},
\begin{equation}
  \label{eq:25}
  \pi_n(\varepsilon_m)= \frac{\int_0^{\varepsilon_m} \pi_n(s) ds}{\int_0^{{\varepsilon_m}} \frac{\pi_n(s)}{\pi_n(\varepsilon_m)}  ds}\le\frac C{n^2} \bigg(\int_0^{{\varepsilon_m}} \frac{\pi_n(s)}{\pi_n(\varepsilon_m)}  ds\bigg)^{-1}.
\end{equation}
Plugging this estimate into \eqref{eq:24}, we obtain that, for every $n\ge m\ge C_1\ell(t)$,
\begin{equation}\label{eq:27}
\frac{\pi_n(t)}{\pi_m(t)}\le C\left(\frac m n\right)^2\cdot \left(\frac 1{\varepsilon_m} \int_0^{{\varepsilon_m}} \frac{\pi_n(s)}{\pi_n(\varepsilon_m)}  ds\right)^{-1}.
\end{equation}
Notice that we already get a quadratic upper bound by using $\pi_n(s)/\pi_n(\varepsilon_m)\ge 1$ in the integral. In order to get the desired logarithmic improvement \eqref{eq:22}, we use the following improved lower bound. 
Let  $m\ge C_1 \ell(t)$ and $n\ge C_1 m$. Notice that $\varepsilon_n\le\varepsilon_m$ by \eqref{eq:23}. For $s\in [\varepsilon_n,\varepsilon_m]$, we have $n\ge \ell(s)$ and therefore \eqref{eq:14} implies that
\begin{equation*}
\frac{\pi_n(s)}{\pi_n(\varepsilon_m)}\ge c \frac{\pi_{\ell(s)}(s)}{\pi_{\ell(s)}(\varepsilon_m)} \overset{\eqref{eq:15}}\ge c\frac{\alpha_{\ell(s)}}{\pi_{\ell(s)}(\varepsilon_m)}.
\end{equation*}
Using  $\pi_{\ell(s)}(\varepsilon_m)\le C/(\ell(s)^2\varepsilon_m)$ (which follows from \eqref{eq:25} and the monotonicity bound $\pi_n(s)/\pi_n(\varepsilon_m)\ge 1$), we finally get for every  $s\in [\varepsilon_n,\varepsilon_m]$
\begin{equation*}
  \frac{\pi_n(s)}{\pi_n(\varepsilon_m)}\ge  c \frac{\varepsilon_m}{\varepsilon_{\ell(s)}} \overset{\eqref{eq:bij}}{\ge} c\frac{\varepsilon_m}{s}.
\end{equation*}
 Therefore, by integration,
\begin{equation*}
\frac 1{\varepsilon_m}\int_0^{{\varepsilon_m}} \frac{\pi_n(s)}{\pi_n(\varepsilon_m)}ds \ge c \int_{\varepsilon_n}^{{\varepsilon_m}} \frac{ds}{s}=c\log(\varepsilon_m/\varepsilon_n).
\end{equation*}
Plugging this estimate in~\eqref{eq:27} and using the polynomial lower bound~\eqref{eq:u_m/u_n} concludes the proof of \eqref{eq:22}.

\medskip
\noindent \textbf{Step 3: Scale by  scale decomposition.}

\smallskip
\noindent Let $C_2>0$ as in \eqref{eq:22}, and fix $\lambda\ge C_2$. Let $t\in [0,1]$ and $n\ge m\ge C_1 \ell(t)$ and  assume for simplicity that $n=\lambda^km$ for some integer $k\ge1$ (the result can  easily be extended to more general $n\ge m\ge\ell(t)$ by quasi-multiplicativity). By decomposing the ratio ${\pi_{n}(t)}/{\pi_{m}(t)}$ into a telescopic product and applying \eqref{eq:22} to each term of this product we obtain
\begin{equation*}
\frac{\pi_{n}(t)}{\pi_{m}(t)}=\prod_{i=1}^k \frac{\pi_{\lambda^im}(t)}{\pi_{\lambda^{i-1}m}(t)} \overset{\eqref{eq:22}} \le \left(\frac m n\right)^2 \left(\frac{C_2}{\log \lambda}\right)^k\le \left(\frac m n\right)^{2+c},
\end{equation*}
provided $\lambda$ is chosen large enough. This completes the proof of~\eqref{eq:11}.
\end{proof}

Additionally, we notice that our approach gives a new proof of the following upper bound for the (multiscale) static four-arm event proven by Garban in Appendix B of \cite{SS11}. However, our approach does not imply the more quantitative result $\alpha_n/\alpha_m \le C(m/n) \sqrt{\alpha_n^{01}/\alpha_m^{01}}$ that can be extracted from \cite[Appendix B]{SS11}. See also \cite{vdBN20} for different proofs in the special case $m=1$.
\begin{corollary}
  \label{cor:2}
  There exist $c,C>0$ such that for every $n\ge m\ge 1$
  \begin{equation*}
    \frac{\alpha_n}{\alpha_m}\le C\left(\frac m n\right)^{1+c}.
  \end{equation*}
\end{corollary}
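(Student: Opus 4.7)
The plan is to deduce Corollary~\ref{cor:2} directly from Proposition~\ref{prop:super_quadratic} by specializing the superquadratic estimate to the terminal noise time $t=1$. At that time the configurations $X$ and $Y$ in the definition of $\Qu_t$ are independent, so the boundary values recalled in Section~\ref{ssec:arm} read $\pi_n(1)=\alpha_n^2$. Consequently, as soon as $\ell(1)$ is bounded by an absolute constant, \eqref{eq:11} applied at $t=1$ gives
\[
\frac{\alpha_n^2}{\alpha_m^2}\;=\;\frac{\pi_n(1)}{\pi_m(1)}\;\le\;C\left(\frac{m}{n}\right)^{2+c},
\]
and extracting a square root produces the claimed inequality with exponent $1+c/2$.

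To check the hypothesis, recall the convention $f_n^{0101}\equiv 1$ whenever $n<4$, hence $\alpha_n=1$ for $n\in\{1,2,3\}$, so that $n^2\alpha_n\ge 1$ for every $n\ge 1$. By the definition $\ell(t)=\min\{n\ge 1:n^2\alpha_n\ge 1/t\}$ this forces $\ell(1)=1$, and therefore Proposition~\ref{prop:super_quadratic} at $t=1$ applies to every pair $n\ge m\ge 1$. Taking a square root of the display above yields
\[
\frac{\alpha_n}{\alpha_m}\;\le\;\sqrt{C}\,\left(\frac{m}{n}\right)^{1+c/2},
\]
which is Corollary~\ref{cor:2} after renaming $c/2$ as $c$.

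There is no real obstacle: the full content of the corollary is already packed inside Proposition~\ref{prop:super_quadratic}, and the corollary is just the ``free'' consequence obtained by reading the superquadratic ratio estimate at the one endpoint of the noise dynamics where $X$ and $Y$ have fully decorrelated. If one wanted a statement for annular ratios $\alpha_{k,n}/\alpha_{k,m}$, the same argument combined with the static quasi-multiplicativity (Proposition~\ref{prop:qm} at $t=0$) would yield it without any extra work, but this is not needed for the present statement.
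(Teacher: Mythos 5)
Your proof is correct and is exactly the paper's proof: the paper states one line, ``This is Proposition~\ref{prop:super_quadratic} in the special case $t=1$,'' and you have simply unpacked that line, including the verification that $\ell(1)=1$ (so that the hypothesis $n\ge m\ge\ell(t)$ is vacuous) and the square-root step using $\pi_n(1)=\alpha_n^2$.
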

\begin{proof}
This is Proposition~\ref{prop:super_quadratic} in the special case $t=1$ (by taking a square root).
\end{proof}

\section{Sharp noise sensitivity for arm  events}\label{sec:sharp}

In this section, we use Proposition~\ref{prop:super_quadratic} to prove Theorem~\ref{thm:arm}. Our proof  relies on the differential inequality of Lemma~\ref{lem:2} which, together with Proposition \ref{prop:super_quadratic}, allows us to prove that the dynamical arm probability $\pi_{m,n}^\star$ ``does not vary'' when restricted to the stability region (i.e.\ $m\le n\le\ell(t)$) or the sensitivity region (i.e.\ $n\ge m \ge\ell(t)$).

Let $\star$ be an arm type.  First integrating the differential inequality of Lemma~\ref{lem:2}, and then using Lemma~\ref{lem:tilde}, we get that for every $n \ge m \ge 1$ and every $0 \le t \le u \le 1$,
\begin{equation}
  \label{eq:exp_star}
1\le \frac{\pi_{m,n}^\star(t)}{\pi_{m,n}^\star(u)}\le C \exp\left(C\int_t^u\sum_{k=1}^n \frac{k^{1+c}}{\max(k,m)^c}\pi_k(s)ds\right).  
\end{equation}
\begin{proof}[Proof of Theorem~\ref{thm:arm}, Item 1]  Let $m,n,t$ such that $\ell(t) \ge n \ge m \ge 1$. By \eqref{eq:exp_star}, it is sufficient to prove that
\[
\int_0^t \sum_{k=1}^n \frac{k^{1+c}}{\max(k,m)^c} \pi_k(s) ds \le \int_0^t \sum_{k=1}^n k \pi_k(s) ds
\]
is bounded from above by a constant. This is proven in Step 1 of the proof of Proposition~\ref{prop:super_quadratic}.
\end{proof}

\begin{proof}[Proof of Theorem~\ref{thm:arm}, Item 2]
Let $m,n,t$ such that $n\ge m \ge \ell(t)$. By \eqref{eq:exp_star}, it is sufficient to prove that
\[
\int_t^1 \sum_{k=1}^n \frac{k^{1+c}}{\max(k,m)^c} \pi_k(s) ds
\]
is bounded from above by a constant. Let us first deal with the sum restricted to $k \le m$. We have
  \begin{equation*}
\int_t^1 \sum_{k=1}^m \frac{k^{1+c}}{m^c} \pi_k(s) ds = \sum_{k=1}^m \frac{k^{c-1}}{m^c} \underbrace{\int_t^1 k^2 \pi_k(s) ds}_{\leq C \text{ by \eqref{eq:13}}} \leq C.
  \end{equation*}
Let us now deal with the sum restricted to $m \le k \le n$. Since $\ell$ is non-increasing, for every $s \in [t,1]$ we have $m \ge \ell(s)$, hence Proposition~\ref{prop:super_quadratic} applied to $k$, $m$ and $s$ implies that
  \begin{equation*}
\int_t^1\sum_{k=m}^n k \pi_k(s)ds \le C \int_t^1 \sum_{k=m}^n k \pi_m(s) \left( \frac{m}{k} \right)^{2+c} ds \le C \int_t^1 m^2 \pi_m(s) ds \overset{\eqref{eq:13}}\le C.
\end{equation*}
This ends the proof of Theorem~\ref{thm:arm}.
\end{proof}

\section{Sharp noise sensitivity for crossing events}
\label{sec:sharp-noise-sens}

In this section, we prove Theorem~\ref{thm:GPS}. Let us first prove the easiest, that is, the second point of the theorem.
\begin{proof}[Proof of Theorem~\ref{thm:GPS}, second point]
By Proposition~\ref{prop:1},
\begin{multline*}
0 \le \left(\Ex [g_n^2] - \Ex[g_n]^2 \right) - \COV_t(g_n) =  \Qu_0(g_n) - \Qu_t(g_n)\\ \le \int_0^t \sum_{i \in \mathcal{H}_n} \Qu_t(i \text{ is piv. for } g_n) dt
\overset{\text{Prop. \ref{prop:2}}}{\le} t \sum_{i \in \mathcal{H}_n} \Proba [i \text{ is piv. for } g_n ].
\end{multline*} 
We conclude the proof by using that $\sum_{i \in \mathcal{H}_n} \Proba [i \text{ is piv. for } g_n ] \asymp 1/\varepsilon_n$ (see for instance \cite[Section 6.5]{Wer07} or \cite[Proposition 6.8]{GS14}).
\end{proof}

The proof of the first point of Theorem \ref{thm:GPS} uses that $\COV_t(g_n)$ can be expressed as an integral, and we first start with a lemma that computes this integral.

\begin{lemma}
  For every $t\in [0,1]$ and every $n\ge \ell(t)$, we have 
  \label{lem:3}
  \begin{equation*}
    \int_t^1n^2\pi_n(s)ds \asymp  (1-t) \left( \frac{n \alpha_n}{\ell(t)\alpha_{\ell(t)}} \right)^2.
  \end{equation*}
\end{lemma}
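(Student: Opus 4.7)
The plan is to express the integrand $\pi_n(s)$ in closed form using static arm probabilities, reduce the lemma to a one-variable scalar estimate on $f(s):=1/\alpha_{\ell(s)}$, and then control that estimate via a polynomial decay bound on $f$. For every $s\in[t,1]$ we have $n\ge\ell(t)\ge\ell(s)$, and combining quasi-multiplicativity (Proposition~\ref{prop:qm}) with the two items of Theorem~\ref{thm:arm} (Item~1 with $m=1$ and $n\leftarrow\ell(s)$ for the first factor, Item~2 with $m=\ell(s)$ for the second) gives
\[
\pi_n(s)\;\asymp\;\pi_{\ell(s)}(s)\,\pi_{\ell(s),n}(s)\;\asymp\;\alpha_{\ell(s)}\,\alpha_{\ell(s),n}^{\,2}\;\asymp\;\frac{\alpha_n^{2}}{\alpha_{\ell(s)}},
\]
where the last step uses quasi-multiplicativity of $\alpha$. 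Using $\ell(t)^{2}\alpha_{\ell(t)}\asymp 1/t$ from \eqref{eq:bij}, the right-hand side of the lemma rewrites as $(1-t)\,t\,n^{2}\alpha_n^{2}/\alpha_{\ell(t)}$, so the statement reduces to the scalar estimate
\[
\int_t^1 f(s)\,ds\;\asymp\;(1-t)\,t\,f(t).
\]

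The main analytic step is a polynomial decay bound on $f$: for some $a>1$ and all $t\le s\le 1$,
\[
f(s)/f(t)\le C(t/s)^{a}.
\]
By Corollary~\ref{cor:2}, $f(s)/f(t)=\alpha_{\ell(t)}/\alpha_{\ell(s)}\le C(\ell(s)/\ell(t))^{1+c}$, so it suffices to bound $\ell(s)/\ell(t)$ from above. Applying the identity $\ell(u)^{2}\alpha_{\ell(u)}\asymp 1/u$ at $u=s$ and $u=t$ and taking ratios yields $(\ell(s)/\ell(t))^{2}\cdot\alpha_{\ell(s)}/\alpha_{\ell(t)}\asymp t/s$; combined with the reverse inequality $\alpha_{\ell(s)}/\alpha_{\ell(t)}\ge c(\ell(t)/\ell(s))^{1+c}$ (again Corollary~\ref{cor:2}), this gives $\ell(s)/\ell(t)\le C(t/s)^{1/(1-c)}$, and hence $a=(1+c)/(1-c)>1$.

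Given this polynomial control, the upper bound
\[
\int_t^1 f(s)\,ds\le Ct^{a}f(t)\int_t^1 s^{-a}\,ds\le \tfrac{C}{a-1}\,t\,f(t),
\]
combined with the trivial $\int_t^1 f\le(1-t)f(t)$, yields $\int_t^1 f\le C\min(t,1-t)f(t)\asymp(1-t)\,t\,f(t)$. For the matching lower bound, I would use \eqref{eq:fourarm_lower_back} and the corresponding two-sided polynomial control on $\ell$ to check that $\ell(2t\wedge 1)\asymp\ell(t)$ and hence $f(2t\wedge 1)\asymp f(t)$; monotonicity of $f$ then gives
\[
\int_t^{2t\wedge 1}f(s)\,ds\ge c(2t\wedge 1-t)\,f(t)=c\min(t,1-t)f(t)\asymp(1-t)\,t\,f(t).
\]

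I expect the main obstacle to be establishing the exponent $a>1$ in the polynomial bound. Using only monotonicity of $\alpha$ (which gives $\alpha_{\ell(s)}/\alpha_{\ell(t)}\ge 1$ in place of the stronger Corollary~\ref{cor:2}) would produce merely the exponent $(1+c)/2<1$, under which the integral fails to concentrate near $s=t$ and the lemma would not hold. The crucial input is therefore the upper bound from Corollary~\ref{cor:2} (proven in the preceding section), which is needed both to relate $f(s)/f(t)$ to $\ell(s)/\ell(t)$ and to relate $\ell(s)/\ell(t)$ to $t/s$, pushing the effective decay exponent strictly past the threshold~$1$.
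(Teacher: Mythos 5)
Your proof is correct and follows essentially the same route as the paper: reduce $\pi_n(s)$ to $\alpha_n^2/\alpha_{\ell(s)}$ via Theorem~\ref{thm:arm} and quasi-multiplicativity, convert via \eqref{eq:bij}, and then exploit a strictly-superlinear polynomial decay of the resulting integrand to localize the integral near $s=t$. In fact your write-up makes explicit a point the paper handles tersely (inside \eqref{eq:poly_bounds_ell(u)}), namely that Corollary~\ref{cor:2} is exactly what pushes the decay exponent past $1$; your derivation of $a=(1+c)/(1-c)>1$ is a welcome clarification of why \eqref{eq:fourarm_lower_back} alone would be insufficient.
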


\begin{proof}
We first observe that, if $n \ge 1$ and $s \in [0,1]$ are such that $\ell(s) \le n$ then (since by quasi-multiplicativity we have $\pi_n(s) \asymp \pi_{\ell(s)}(s) \pi_{\ell(s),n}(s)$) Theorem \ref{thm:arm} implies that
\begin{equation}\label{eq:etoile**}
\pi_n(s) \asymp \frac{(\alpha_n)^2}{\alpha_{\ell(s)}}.
\end{equation}

Let $t\in [0,1]$ and $n\ge \ell(t)$. For every $s \in [t,1]$, we have $\ell(s) \le \ell(t) \le n$. As a result, \eqref{eq:etoile**} implies that
  \begin{equation}\label{eq:int_lem8.1}
    \int_t^1n^2\pi_n(s)ds\asymp  n^2 \alpha_n^2\int_t^1\frac{ds}{\alpha_{\ell(s)}}\overset{\eqref{eq:bij}}\asymp n^2 \alpha_n^2\int_t^1 s \ell(s)^2ds.
  \end{equation}
 By using \eqref{eq:bij} again, Corollary~\ref{cor:2} and \eqref{eq:fourarm_lower_back}, we can show the two polynomial bounds for $1 \ge s \ge t \ge 0$
\begin{equation}\label{eq:poly_bounds_ell(u)}
c\left( \frac{t}{s} \right)^{1+c} \leq \frac{\ell(s)}{\ell(t)} \leq C\left( \frac{t}{s} \right)^{C}.
\end{equation}
This allows us  to treat $\ell(s)$ in the integral \eqref{eq:int_lem8.1} as if it was of the form  $s^\kappa$ for some $\kappa>1$. More precisely, this implies that
  \begin{equation*}
    \int_t^1 s \ell(s)^2ds \asymp (1-t)t^2\ell(t)^2 \overset{\eqref{eq:bij}}{\asymp}\frac{1-t}{\ell(t)^2\alpha_{\ell(t)}^2}. \qedhere
  \end{equation*}
\end{proof}

\begin{proof}[Proof of Theorem~\ref{thm:GPS}, first point]
We prove the following stronger statement: For every $t \in [0,1]$ and $n \ge \ell(t)$,
  \begin{equation}\label{eq:3}
     \COV_t(g_n)\asymp  (1-t) \left(\frac{n\alpha_n}{\ell(t) \alpha_{\ell(t)}} \right)^2
\end{equation}
(that is also proven in \cite{GPS10}). Let us first explain why \eqref{eq:3} implies the first point of Theorem~\ref{thm:GPS}. By Corollary~\ref{cor:2}, it is enough to show that $\ell(t_n)/n$ goes to $0$ if $t_n / \varepsilon_n$ goes to $+\infty$. To prove this, one can apply \eqref{eq:poly_bounds_ell(u)} to $t=\varepsilon_n$ and $s=t_n$ and use that $\ell(\varepsilon_n) \le n$.
 
Let us now prove \eqref{eq:3}. By Lemmas~\ref{lem:diff_g_n} and~\ref{lem:3}, it suffices to prove that, for every $s \in [t,1]$,
  \begin{equation}
    \label{eq:36}
    \sum_{k=1}^n \sum_{r=k}^n \pi_k(s) \pi_{k,r}^{010+}(s)  \pi_{r,n}^{01+}(s)\le C n^2\pi_n(s)
  \end{equation}
  (indeed, \eqref{eq:36} and Lemma~\ref{lem:diff_g_n} imply that $-\frac{d}{ds}\COV_s(g_n) \asymp n^2\pi_n(s)$). By Theorem~\ref{thm:arm} applied to $\star=1010$ and $\star = 010+$ and by the lower bound of \eqref{eq:fourarm_lower_back} as well as \eqref{eq:39} to estimate the arm events of type $010+$, we have $\pi_k(s) \pi_{k,r}^{010+}(s) \le C \pi_r(s)$ for every $r \ge k \ge 1$. As a result, for every $s \in [t,1]$,
\begin{equation}
\sum_{k=1}^n \sum_{r=k}^n \pi_k(s) \pi_{k,r}^{010+}(s) \pi_{r,n}^{01+}(s) \le C\sum_{r=1}^n r \pi_r(s) \pi_{r,n}^{01+}(s).\label{eq:37}
\end{equation}
Let us first deal with the sum for $r\le\ell(s)$. By Theorem~\ref{thm:arm} and quasi-multiplicativity applied to $\star=0101$ and $\star = 01+$, and by \eqref{eq:39} to estimate the arm events of type $01+$, we have
\begin{equation*}
  \sum_{r=1}^{\ell(s)} r \pi_r(s) \pi_{r,n}^{01+}(s)\le C \frac{\ell(s)}{n^2} \sum_{r=1}^{\ell(s)}  r^2 \alpha_r\overset{\eqref{eq:fourarm_lower_back}}{\le} C\frac{\ell(s)^4 \alpha_{\ell(s)}}{n^2}\overset{\eqref{eq:fourarm_lower_back}}{\le} C\frac{n^2\alpha_n^2}{\alpha_{\ell(s)}} \overset{\eqref{eq:etoile**}}{\le} C n^2\pi_n(s).
\end{equation*}
Similarly, for the  sum over  $r\ge\ell(s)$, we have 
\begin{equation*}
 \sum_{r=\ell(s)}^n r \pi_r(s) \pi_{r,n}^{01+}(s) \overset{\eqref{eq:etoile**}}{\le} C \frac1 {\alpha_{\ell(s)}n^2}\sum_{r=\ell(s)}^n r^3 \alpha_r^2 \overset{\eqref{eq:fourarm_lower_back}}{\le} C\frac{n^2\alpha_n^2}{\alpha_{\ell(s)}} \overset{\eqref{eq:etoile**}}{\le} C n^2\pi_n(s) .
\end{equation*}
(As observed in \cite{GPS10}, the boundary effects are actually negligible and the main contribution is controlled by the last displayed equation.) Plugging these two estimates in \eqref{eq:37} concludes the proof of \eqref{eq:36}, which ends the proof of the theorem.
\end{proof}

\section{Applications to dynamical percolation}\label{sec:dyna}

\subsection{Background on critical exponents}\label{ssec:exp}

Let $j \ge 1$ and let $\sigma \in \{0,1\}^j$ be a sequence that is not monochromatic. Lawler, Schramm and Werner \cite{LSW02} and Smirnov and Werner \cite{SW01} have proven that
\[
\alpha^\sigma_{m,n}=(m/n)^{\zeta_j+o(1)}, \quad \alpha^{\sigma+}_{m,n}=(m/n)^{\zeta_j^+ +o(1)},
\]
where $o(1)$ goes to $0$ as $m/n$ goes to $0$ and where
\[
\zeta_1 = 5/48; \quad \zeta_j = (j^2-1)/12 \text{ for every } j\geq 2; \quad \zeta_j^+ = (j^2+j)/6 \text{ for every } j \ge 1.
\]
Note that the Hausdorff dimensions from Theorem~\ref{thm:Hausdorff} are $(2-\zeta_4-\zeta^\star)/(2-\zeta_4)$, where $\zeta^\star$ equals $\zeta_1$, $\zeta_1^+$, $\zeta_2$ and $\zeta_3$ respectively. Moreover, the exponents from Remark~\ref{rem:exp_dyna_jge4} are $2-\zeta_4-\zeta^\star$.

\subsection{Computation of the Hausdorff dimensions}

\begin{proof}[Proof of Theorem~\ref{thm:Hausdorff}]
The upper bounds on these four Hausdorff dimensions are proved in \cite{SS10} (without using any spectral tool). We prove the lower bounds. Let $\star \in \{1,01,010,1+\}$. As explained in \cite[Section 6]{SS10}, by the Cauchy--Schwarz inequality and Frostman's theorem, it is sufficient to show that, for every $\gamma < 31/36$ (resp.\ $5/9$, $2/3$ and $1/9$) if $\star=1$ (resp.\ $1+$, $01$ and $010$), we have
\[
\sup_n \int_0^1 t^{-\gamma} \frac{\E \left[ f_n^\star(\omega(0)) f_n^\star(\omega(t)) \right]}{(\alpha^\star_n)^2} dt < +\infty.
\]
We first note that, by the results from \cite{LSW02,SW01} recalled in Section~\ref{ssec:exp}, $\gamma < 31/36$ (resp.\ $5/9$, $2/3$ and $1/9$) is equivalent to $1-\gamma>4\zeta^\star/3$.

Since $\E[ f_n^\star(\omega(0)) f_n^\star(\omega(t))] = \pi_n^\star(1 - e^{-t})$, it is sufficient to prove that
\[
\sup_n \int_0^1 t^{-\gamma} \frac{\pi^\star_n(t)}{(\alpha^\star_n)^2} dt  < +\infty
\]
for such $\gamma$'s. Recall that $\varepsilon_n = \frac{1}{n^2\alpha_n}$. For every $t \in [\varepsilon_n,1]$, we have $\ell(t) \leq \ell(\varepsilon_n)\leq n$. By using the bound $\pi^\star_n(t) \leq \alpha^\star_n$ on $[0,\varepsilon_n]$ and by using Theorem~\ref{thm:arm} and quasi-multiplicativity on $[\varepsilon_n,1]$ (as in \eqref{eq:etoile**}), we obtain that
\[
\int_0^1 t^{-\gamma} \frac{\pi^\star_n(t)}{(\alpha^\star_n)^2} dt \leq \int_0^{\varepsilon_n} t^{-\gamma} (\alpha^\star_n)^{-1} dt + C \int_{\varepsilon_n}^1 t^{-\gamma} (\alpha^\star_{\ell(t)})^{-1} dt.
\]
By the results from \cite{LSW02,SW01} recalled in Section~\ref{ssec:exp}, we have $\ell(t)=t^{-4/3+o(1)}$, $\varepsilon_n=n^{-3/4+o(1)}$ and $\alpha_m^\star=m^{-\zeta^\star+o(1)}$. As a result, the above equals
\begin{multline*}
\int_0^{n^{-3/4+o(1)}} t^{-\gamma}n^{\zeta^\star+o(1)} dt + C \int_{n^{-3/4+o(1)}}^1 t^{-\gamma} t^{4/3 \times \zeta^\star +o(1)} dt\\
\leq \frac{1}{1-\gamma} n^{-3(1-\gamma)/4+\zeta^\star+o(1)} + C\int_0^1 t^{-\gamma+4/3 \times \zeta^\star +o(1)} dt,
\end{multline*}
which is bounded uniformly in $n$ if $1-\gamma>4\zeta^\star/3$. This ends the proof.
\end{proof}

\section{Bond percolation on $\Z^2$}\label{sec:Z2}

In this section, we consider bond percolation on the square lattice $\Z^2$, which is defined as follows: Each edge is independently open with probability $p$ and closed with probability $1-p$. Consider the dual lattice $(\Z^2)^\star$ and let each edge $e^\star$ of this dual lattice be open (resp.\ closed) if the unique edge $e$ that intersects $e^\star$ is closed (resp.\ open). The primal open edges are colored in black and the dual open edges are colored in white. The critical parameter of this model is $p_c=1/2$ \cite{Kes80}.

In every proof except in Section~\ref{sec:dyna} (where we prove Theorem~\ref{thm:Hausdorff} by using the computation of the critical exponents for face percolation on the hexagonal lattice), we have only used results that are also known for bond percolation on $\Z^2$. In particular, we have proven Theorems~\ref{thm:GPS} and~\ref{thm:arm} for this model.

Garban, Pete and Schramm have proven in \cite{GPS10} that a.s.\ there exist exceptional times with an unbounded black component for percolation on $\Z^2$. If one combines their results with the bounds on the arm events probabilities from \cite{DMT20}, one obtains that the Hausdorff dimension of such times is at least $3/4$. By using Theorem~\ref{thm:arm} and the bounds from \cite{DMT20}, we obtain a new proof of this result and we also prove a lower bound on the Hausdorff dimension of exceptional times with both a black and a white unbounded components.
\begin{theorem}\label{thm:HausdorffZ2}
Consider critical dynamical bond percolation on $\Z^2$. There exist $d_1 \ge 3/4$ and $d_2 \ge 1/2$ such that a.s.\ the Hausdorff dimension of the set of times with an unbounded black component equals $d_1$ and the Hausdorff dimension of the set of times with both a black and a white unbounded components equals $d_2$.
\end{theorem}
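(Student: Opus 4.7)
The plan is to follow the proof of Theorem~\ref{thm:Hausdorff} essentially verbatim, substituting the one-sided polynomial bounds on arm probabilities for bond percolation on $\Z^2$ proved in \cite{DMT20} for the exact critical exponents of \cite{LSW02,SW01} used in the hexagonal case. Theorem~\ref{thm:arm}, being equally valid for $\Z^2$ as emphasized in Section~\ref{sec:Z2}, remains the sole structural input and requires no modification.

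First I would establish the almost sure constancy of the two Hausdorff dimensions, which is precisely what defines $d_1$ and $d_2$. Writing $\R_+=\bigcup_{k\in\N}[k,k+1]$ and using the stationarity in time of the dynamical percolation process, the sequence $(\dim_H(\mathcal{E}\cap[k,k+1]))_{k\in\N}$ is stationary for each exceptional set $\mathcal{E}$; its supremum equals $\dim_H(\mathcal{E})$ by countable stability of the Hausdorff dimension and is almost surely deterministic by ergodicity of the time shift. The matching upper bounds on $d_1,d_2$ (ensuring these are real numbers rather than $+\infty$) come from the first-moment estimates in \cite[Sections 6--8]{SS10}, which do not require exact exponents.

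For the lower bounds, I would reproduce the second moment / Frostman argument of the proof of Theorem~\ref{thm:Hausdorff} without changes: it suffices to show
\[
\sup_n \int_0^1 t^{-\gamma}\,\frac{\pi_n^\star(1-e^{-t})}{(\alpha_n^\star)^2}\,dt < +\infty
\]
for every $\gamma<3/4$ when $\star=1$, and every $\gamma<1/2$ when $\star=01$ (the polychromatic two-arm event, which encodes the simultaneous existence of a black and a white unbounded component). Splitting the integral at $t=\varepsilon_n$, using the trivial bound $\pi_n^\star\le\alpha_n^\star$ on $[0,\varepsilon_n]$ and applying Theorem~\ref{thm:arm} together with quasi-multiplicativity on $[\varepsilon_n,1]$, this reduces to proving
\[
\int_0^{\varepsilon_n}\frac{t^{-\gamma}}{\alpha_n^\star}\,dt + \int_{\varepsilon_n}^1\frac{t^{-\gamma}}{\alpha_{\ell(t)}^\star}\,dt \le C
\]
uniformly in $n$, which is where the quantitative arm-probability bounds of \cite{DMT20} enter.

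The main, and essentially only, obstacle is arithmetic. Let $\eta_4,\eta_1,\eta_2$ denote the best available upper bounds on the corresponding exponents extracted from \cite{DMT20}, so that $\alpha_n\ge c\,n^{-\eta_4}$ and $\alpha_n^\star\ge c\,n^{-\eta_\star}$; in particular $\ell(t)\le C\,t^{-1/(2-\eta_4)}$. The displayed right-hand side is then polynomial in $n$ of exponent at most $\eta_\star-(2-\eta_4)(1-\gamma)+o(1)$, which is uniformly bounded as soon as $\gamma<1-\eta_\star/(2-\eta_4)$. Consequently, the lower bounds $d_1\ge 3/4$ and $d_2\ge 1/2$ boil down to checking the numerical inequalities
\[
\frac{\eta_1}{2-\eta_4}\le\frac14 \qquad\text{and}\qquad \frac{\eta_2}{2-\eta_4}\le\frac12
\]
against the explicit constants proven in \cite{DMT20}. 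No conceptual new ingredient beyond Theorem~\ref{thm:arm} and the inputs of \cite{DMT20} is required.
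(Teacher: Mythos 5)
Your proposal is correct and follows essentially the same route as the paper: establish a.s.\ constancy of the dimensions (the paper invokes Kolmogorov's 0-1 law via \cite[Section 6]{SS10}, while you phrase it through stationarity and ergodicity of the time shift, both standard), then reduce to the integral estimate from the proof of Theorem~\ref{thm:Hausdorff} and feed in the one-sided bounds $\alpha_n \ge cn^{-4/3}$, $\alpha_n^1 \ge cn^{-1/6}$, $\alpha_n^{01} \ge cn^{-1/3}$ from \cite{DMT20}. Your abstract criteria $\eta_1/(2-\eta_4)\le 1/4$ and $\eta_2/(2-\eta_4)\le 1/2$ hold with equality for these values, yielding precisely $\gamma<3/4$ and $\gamma<1/2$ as in the paper.
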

\begin{proof}
The fact that these Hausdorff dimensions are a.s.\ constant is a consequence of Kolmogorov's 0-1 law as explained in \cite[Section 6]{SS10}. Let $\star=1$ or $01$. As in the proof of Theorem~\ref{thm:Hausdorff}, it is sufficient to show that for any $\gamma < 3/4$ (resp.\ $1/2$),
\[
\sup_n \varepsilon_n^{1-\gamma} (\alpha^\star_n)^{-1} + \int_0^1 t^{-\gamma} (\alpha^\star_{\ell(t)})^{-1} dt < +\infty.
\]
This is a direct consequence of the following estimates from \cite[Section 6.4]{DMT20}:
\[
\alpha_n^1 \ge cn^{-1/6}, \quad \alpha_n^{01} \ge cn^{-1/3}, \quad \alpha_n \ge cn^{-4/3}. \qedhere
\]
\end{proof}

\footnotesize

\bibliographystyle{alpha}
\bibliography{ref}

\begin{thebibliography}{AGMT16}

\bibitem[AB87]{AB87}
M.~Aizenman and D.J. Barsky.
\newblock Sharpness of the phase transition in percolation models.
\newblock {\em Communications in Mathematical Physics}, 108, 1987.

\bibitem[AB18]{AB18}
D.~Ahlberg and R.~Baldasso.
\newblock Noise sensitivity and {V}oronoi percolation.
\newblock {\em Electronic Journal of Probability}, 23, 2018.

\bibitem[ABGM14]{ABGM14}
D.~Ahlberg, E.~Broman, S.~Griffiths, and R.~Morris.
\newblock Noise sensitivity in continuum percolation.
\newblock {\em Israel Journal of Mathematics}, 201, 2014.

\bibitem[AGMT16]{AGMT16}
D.~Ahlberg, S.~Griffiths, R.~Morris, and V.~Tassion.
\newblock Quenched {V}oronoi percolation.
\newblock {\em Advances in Mathematics}, 286, 2016.

\bibitem[BGL13]{BGL13}
D.~Bakry, I.~Gentil, and M.~Ledoux.
\newblock {\em Analysis and geometry of {M}arkov diffusion operators}.
\newblock Springer Science \& Business Media, 2013.

\bibitem[BGS13]{BGS13}
E.I. Broman, C.~Garban, and J.E. Steif.
\newblock Exclusion sensitivity of {B}oolean functions.
\newblock {\em Probability theory and related fields}, 155, 2013.

\bibitem[BKS99]{BKS99}
I.~Benjamini, G.~Kalai, and O.~Schramm.
\newblock Noise sensitivity of {B}oolean functions and applications to
  percolation.
\newblock {\em Publications math{\'e}matiques de l'IH{\'E}S}, 90, 1999.

\bibitem[BMR20]{BMR18}
D.~Beliaev, S.~Muirhead, and A.~Rivera.
\newblock A covariance formula for topological events of smooth {G}aussian
  fields.
\newblock {\em Annals of Probability}, 48, 2020.

\bibitem[BN20]{vdBN20}
J.~van~den Berg and P.~Nolin.
\newblock On the four-arm exponent for {2D} percolation at criticality.
\newblock {\em arXiv:2008.01606}, 2020.

\bibitem[BP15]{BP15}
Z.~Bartha and G.~Pete.
\newblock Noise sensitivity in bootstrap percolation.
\newblock {\em arXiv:1509.08454}, 2015.

\bibitem[BR06]{BR06}
B.~Bollob{\'a}s and O.~Riordan.
\newblock {\em Percolation}.
\newblock Cambridge University Press, 2006.

\bibitem[Cha14]{Cha14}
S.~Chatterjee.
\newblock {\em Superconcentration and related topics}.
\newblock Springer, 2014.

\bibitem[DMT21]{DMT20}
H.~{Duminil-Copin}, I.~Manolescu, and V.~Tassion.
\newblock Planar random-cluster model: fractal properties of the critical
  phase.
\newblock {\em Probability Theory and Related Fields}, 181, 2021.

\bibitem[EG20]{EG20}
R.~Eldan and R.~Gross.
\newblock Concentration on the {B}oolean hypercube via pathwise stochastic
  analysis.
\newblock {\em arXiv:1909.12067}, 2020.

\bibitem[For16]{For16}
M.P. Forsstr{\"o}m.
\newblock Monotonicity properties of exclusion sensitivity.
\newblock {\em Electronic Journal of Probability}, 21, 2016.

\bibitem[GHSS21]{GHSS19}
C.~Garban, N.~Holden, A.~Sep{\'u}lveda, and X.~Sun.
\newblock Liouville dynamical percolation.
\newblock {\em Probability Theory and Related Fields}, 180, 2021.

\bibitem[GP20]{GP20}
P.~Galicza and G.~Pete.
\newblock Sparse reconstruction in spin systems {I}: iid spins.
\newblock {\em arXiv:2010.10483}, 2020.

\bibitem[GPS10]{GPS10}
C.~Garban, G.~Pete, and O.~Schramm.
\newblock The {F}ourier spectrum of critical percolation.
\newblock {\em Acta Mathematica}, 205, 2010.

\bibitem[GPS13]{GPS13}
C.~Garban, G.~Pete, and O.~Schramm.
\newblock Pivotal, cluster, and interface measures for critical planar
  percolation.
\newblock {\em Journal of the American Mathematical Society}, 26, 2013.

\bibitem[GPS18]{GPS18}
C.~Garban, G.~Pete, and O.~Schramm.
\newblock The scaling limits of near-critical and dynamical percolation.
\newblock {\em Journal of European Mathematical Society}, 20, 2018.

\bibitem[Gri99]{Gri99}
G.R. Grimmett.
\newblock {\em Percolation (Grundlehren der mathematischen Wissenschaften)}.
\newblock Springer, 1999.

\bibitem[GS14]{GS14}
C.~Garban and J.E. Steif.
\newblock {\em Noise sensitivity of {B}oolean functions and percolation}.
\newblock Cambridge University Press, 2014.

\bibitem[GV19]{GV19a}
C.~Garban and H.~Vanneuville.
\newblock Exceptional times for percolation under exclusion dynamics.
\newblock {\em Annales Scientifiques de l'École normale supérieure}, 52,
  2019.

\bibitem[GV20]{GV19b}
C.~Garban and H.~Vanneuville.
\newblock Bargmann-{F}ock percolation is noise sensitive.
\newblock {\em Electronic Journal of Probability}, 25, 2020.

\bibitem[HH17]{HH17}
M.~Heydenreich and R.~van~der Hofstad.
\newblock {\em Progress in high-dimensional percolation and random graphs}.
\newblock Springer, 2017.

\bibitem[HPS97]{HPS97}
O.~H{\"a}ggstr{\"o}m, Y.~Peres, and J.E. Steif.
\newblock Dynamical percolation.
\newblock {\em Annales de l'Institut Henri Poincar\'e, Probabilit\'es et
  Statistiques}, 33, 1997.

\bibitem[HPS15]{HPS15}
A.~Hammond, G.~Pete, and O.~Schramm.
\newblock Local time on the exceptional set of dynamical percolation and the
  {I}ncipient {I}nfinite {C}luster.
\newblock {\em The Annals of Probability}, 43, 2015.

\bibitem[Kes80]{Kes80}
H.~Kesten.
\newblock The critical probability of bond percolation on the square lattice
  equals {$\frac{1}{2}$}.
\newblock {\em Communications in Mathematical Physics}, 74, 1980.

\bibitem[Kes87]{Kes87}
H.~Kesten.
\newblock Scaling relations for {2D}-percolation.
\newblock {\em Communications in Mathematical Physics}, 109, 1987.

\bibitem[LP20]{LP20}
E.~Lubetzky and Y.~Peled.
\newblock Noise sensitivity of critical random graphs.
\newblock {\em arXiv:2009.01707}, 2020.

\bibitem[LS15]{LS15}
E.~Lubetzky and J.E. Steif.
\newblock Strong noise sensitivity and random graphs.
\newblock {\em The Annals of Probability}, 43, 2015.

\bibitem[LSW02]{LSW02}
G.F. Lawler, O.~Schramm, and W.~Werner.
\newblock One-arm exponent for critical 2{D} percolation.
\newblock {\em Electronic Journal of Probability}, 7, 2002.

\bibitem[Nol08]{Nol08}
P.~Nolin.
\newblock Near-critical percolation in two dimensions.
\newblock {\em Electronic Journal of Probability}, 13, 2008.

\bibitem[O'D14]{oDo14}
R.~O'Donnell.
\newblock {\em Analysis of {B}oolean functions}.
\newblock Cambridge University Press, 2014.

\bibitem[Pit96]{Pit96}
V.I. Piterbarg.
\newblock {\em Asymptotic methods in the theory of {G}aussian processes and
  fields}.
\newblock Providence, RI: AMS, 1996.

\bibitem[Ros21]{Ros17}
R.~Rossignol.
\newblock Scaling limit of dynamical percolation on critical
  {E}rd{\H{o}}s--{R}{\'e}nyi random graphs.
\newblock {\em The Annals of Probability}, 49, 2021.

\bibitem[R{\c{S}}18]{RS18}
M.I. Roberts and B.~{\c{S}}eng{\"u}l.
\newblock Exceptional times of the critical dynamical
  {E}rd{\H{o}}s--{R}{\'e}nyi graph.
\newblock {\em The Annals of Applied Probability}, 28, 2018.

\bibitem[RV19]{RV19}
A.~Rivera and H.~Vanneuville.
\newblock Quasi-independence for nodal lines.
\newblock {\em Annales de l'Institut Henri Poincar{\'e}, Probabilit{\'e}s et
  Statistiques}, 55, 2019.

\bibitem[Smi01]{Smi01}
S.~Smirnov.
\newblock Critical percolation in the plane: conformal invariance, {}cardy's
  formula, scaling limits.
\newblock {\em Comptes Rendus de l'Acad{\'e}mie des Sciences-Series
  I-Mathematics}, 333, 2001.

\bibitem[SS10]{SS10}
O.~Schramm and J.E. Steif.
\newblock Quantitative noise sensitivity and exceptional times for percolation.
\newblock {\em Annals of Mathematics}, 171, 2010.

\bibitem[SS11]{SS11}
O.~Schramm and S.~Smirnov.
\newblock On the scaling limits of planar percolation. {With an appendix by
  Christophe Garban}.
\newblock {\em Annals of Probability}, 39, 2011.

\bibitem[SW01]{SW01}
S.~Smirnov and W.~Werner.
\newblock Critical exponents for two-dimensional percolation.
\newblock {\em Mathematical Research Letters}, 8, 2001.

\bibitem[Van21]{Van19}
H.~Vanneuville.
\newblock The annealed spectral sample of {V}oronoi percolation.
\newblock {\em The Annals of Probability}, 49, 2021.

\bibitem[Wer07]{Wer07}
W.~Werner.
\newblock Lectures on two-dimensional critical percolation.
\newblock {\em IAS Park City Graduate Summer School}, 2007.

\end{thebibliography}

\end{document}